\newtheorem{theorem}{Theorem}[section]
\newtheorem{lemma}{Lemma}[section]
\newtheorem{proposition}{Proposition}[section]
\newtheorem{definition}{Definition}[section]
\newtheorem{remark}{Remark}[section]
\newtheorem{problem}{{\bf{Problem}}}
\theoremstyle{remark}
\newcommand \alp{\alpha}
\newcommand \eps{\varepsilon}
\newcommand \vphi{\varphi}
\newcommand \gam{\gamma}
\newcommand \R{\mathbb{R}}
\newcommand \der{\partial}
\newcommand \til{\tilde}
\newcommand \Sonic{\Gamma_{sonic}}
\newcommand \Shock{\Gamma_{shock}}
\newcommand \Wedge{\Gamma_{wedge}}
\newcommand \on{\text{on}}
\newcommand \inn{\text{in}}
\newcommand \For{\text{for}}
\newcommand \Om{\Omega}
\newcommand \vR{r}
\newcommand \mcl{\mathcal}
\newcommand{\dist}{ \mbox{dist}}
\newcommand{\divg}{ \mbox{div}}
\newcommand{\grad}{ D}
\newcommand{\PtUpL}{{P_1}}
\newcommand{\PtLwL}{{P_2}}
\newcommand{\PtLwR}{{P_3}}
\newcommand{\PtUpR}{{P_4}}
\newcommand{\NGl}{{\mathcal L}}
\newcommand{\Nl}{{\mathcal L}_1}
\newcommand{\Ml}{{\mathcal L}_2}
\newcommand{\Aa}{a}
\newcommand{\Ab}{b}
\newcommand{\Abeta}{\beta}
\newcommand{\AM}{M}
\newcommand{\Qr}{Q^+}
\numberwithin{equation}{section}
\begin{document}
\title[Regularity of Solutions to Regular Shock Reflection]
{Regularity of Solutions to Regular Shock Reflection for Potential
Flow}
\author{Myoungjean Bae}
\address{M. Bae, Department of Mathematics\\
         University of Wisconsin\\
         Madison, WI 53706-1388, USA}
\email{bae@math.wisc.edu}
\author{Gui-Qiang Chen}
\address{G.-Q. Chen,  Department of Mathematics\\
         Northwestern University \\
         Evanston, IL 60208-2730, USA}
\email{gqchen@math.northwestern.edu}
\author{Mikhail Feldman}
\address{M. Feldman, Department of Mathematics\\
         University of Wisconsin\\
         Madison, WI 53706-1388, USA}
\email{feldman@math.wisc.edu}
\subjclass{Primary: 35M10, 35J65, 35R35, 35J70, 76H05, 35B60, 35B65;
Secondary: 35L65, 35L67, 76L05}
\keywords{shock reflection, regular reflection
configuration, global solutions, regularity, optimal regularity,
existence, transonic flow, transonic shocks, free boundary problems,
degenerate elliptic, corner singularity, mathematical approach,
elliptic-hyperbolic, nonlinear equations, second-order, mixed type,
Euler equations, compressible flow}

\date{\today}
\begin{abstract}
The shock reflection problem is one of the most important problems
in mathematical fluid dynamics, since this problem not only arises
in many important physical situations but also is fundamental for
the mathematical theory of multidimensional conservation laws that
is still largely incomplete.
However, most of the
fundamental issues for shock reflection have not been understood,
including the regularity and transition of the different patterns of
shock reflection configurations. Therefore, it is important to
establish the regularity of solutions to shock reflection in order
to understand fully the phenomena of shock reflection.
On the other hand, for a regular reflection configuration, the
potential flow governs the exact behavior of the solution in
$C^{1,1}$ across the pseudo-sonic circle even starting from the full
Euler flow, that is, both of the nonlinear systems are actually the
same in an physically significant region near the pseudo-sonic
circle; thus, it becomes essential to understand the optimal
regularity of solutions for the potential flow across the
pseudo-sonic circle (the transonic boundary from the elliptic to
hyperbolic region) and at the point where the pseudo-sonic circle
(the degenerate elliptic curve) meets the reflected shock (a free
boundary connecting the elliptic to hyperbolic region). In this
paper, we study the regularity of solutions to regular shock
reflection for potential flow. In particular, we prove that the
$C^{1,1}$-regularity is {\it optimal} for the solution across the
pseudo-sonic circle and at the point where the pseudo-sonic circle
meets the reflected shock. We also obtain the $C^{2, \alpha}$
regularity of the solution up to the pseudo-sonic circle in the
pseudo-subsonic region. The problem involves two types of transonic
flow: one is a continuous transition through the pseudo-sonic circle
from the pseudo-supersonic region to the pseudo-subsonic region; the
other a jump transition through the transonic shock as a free
boundary from another pseudo-supersonic region to the
pseudo-subsonic region. The techniques and ideas developed in this
paper will be useful to other regularity problems for nonlinear
degenerate equations involving similar difficulties.
\end{abstract}
\maketitle

\section{Introduction}

We are concerned with the regularity of global solutions to shock
wave reflection by wedges. The shock reflection problem is one of
the most important problems in mathematical fluid dynamics,
which not only arises in many important physical situations but also
is fundamental for the mathematical theory of multidimensional
conservation laws that is still largely incomplete; its solutions
are building blocks and asymptotic attractors of general solutions
to the multidimensional Euler equations for compressible fluids (cf.
Courant-Friedrichs \cite{CF}, von Neumann \cite{Neumann},
Glimm-Majda \cite{GlimmMajda}, and Morawetz \cite{Morawetz2}; also
see \cite{BD,ChangChen,GlimmK,Guderley,LaxLiu,Serre,VD}).
%
%

In Chen-Feldman \cite{ChenFeldman}, the first global existence
theory of shock reflection configurations for potential flow has
been established when the wedge angle $\theta_w$ is large,
which converge to the unique solution of the normal reflection when
$\theta_w$ tends to $\pi/2$.
%
%
However, most of the fundamental issues for shock reflection by
wedges have not been understood, including the regularity and
transition of the different patterns of shock reflection
configurations. Therefore, it is important to establish the
regularity of solutions to shock reflection in order to understand
fully the phenomena of shock reflection, including the case of
potential flow which is widely used in aerodynamics (cf.
\cite{Bers1,CC,GlimmMajda,MajdaTh,Morawetz2}). On the other hand,
for the regular reflection configuration as in Fig. 1, the potential
flow governs the exact behavior of solutions in $C^{1,1}$ across the
pseudo-sonic ({\it sonic}, for short below) circle $\PtUpL\PtUpR$ even
starting from the full Euler flow, that is, both of the nonlinear
systems are actually the same in a physically significant region
near the sonic circle; thus, it becomes essential to understand the
optimal regularity of solutions for the potential flow across the
sonic circle $\PtUpL\PtUpR$
and at the point $\PtUpL$ where the sonic circle
meets the reflected shock.

\begin{figure}[h]
\centering
\includegraphics[height=2.4in,width=3.0in]{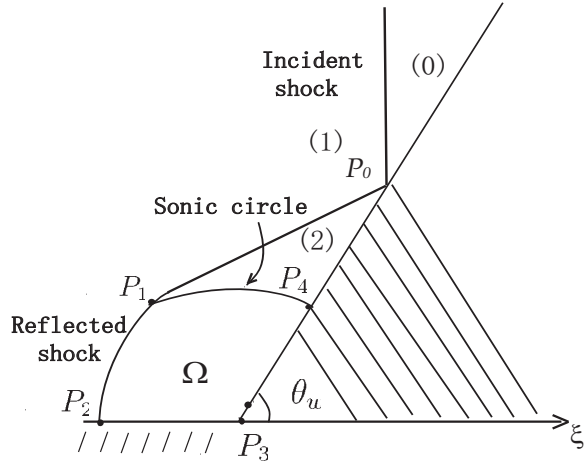}  
\caption[]{Regular Reflection Configuration}
\label{fig:RegularReflection}
\end{figure}

In this paper, we develop a mathematical approach in Sections 2--4
to establish the regularity of solutions to regular shock reflection
with the configuration as in Fig. 1 for potential flow. In
particular, we prove that the $C^{1,1}$-regularity is {\it optimal}
for the solution across the open part $\PtUpL\PtUpR$ of the sonic
circle (the degenerate elliptic curve) and at the point $\PtUpL$
where the sonic circle meets the reflected shock (as a free
boundary). The problem involves two types of transonic flow: one is
a continuous transition through the sonic circle $\PtUpL\PtUpR$ from
the pseudo-supersonic ({\it supersonic}, for short below) region (2)
to the pseudo-subsonic ({\it subsonic}, for short below) region
$\Omega$; the other is a jump transition through the transonic shock
as a free boundary from the supersonic region (1) to the subsonic
region $\Omega$. To achieve the optimal regularity, one of the main
difficulties is that the part $\PtUpL\PtUpR$ of the sonic circle is
the transonic boundary separating the elliptic region from the
hyperbolic region. Near  $\PtUpL\PtUpR$,  the solution is governed
by a nonlinear equation, whose main part has the form:
\begin{equation}\label{1.1}
(2x-a \psi_x)\psi_{xx} +b \psi_{yy} -\psi_x=0,
\end{equation}
where $a, b>0$ are constants, and which is elliptic in $\{x>0\}$
where $\psi>0$, with elliptic degeneracy at $\{x=0\}$ where $\psi =
0$. We analyze the features of equations modeled by \eqref{1.1} and
prove the $C^{2,\alpha}$ regularity of solutions of shock reflection
problem in the elliptic region up to the open part $\PtUpL\PtUpR$ of
the sonic circle. As a corollary, we establish that the
$C^{1,1}$-regularity is actually optimal across the transonic
boundary $\PtUpL\PtUpR$ from the elliptic to hyperbolic region.
Since the reflected shock $\PtUpL\PtLwL$ is regarded as a free
boundary connecting the hyperbolic region (1) with the elliptic
region $\Omega$ for the nonlinear second-order equation of mixed
type, another difficulty for the optimal regularity of the solution
is that the point $\PtUpL$ is exactly the one where the degenerate
elliptic curve $\PtUpL\PtUpR$ meets a transonic free boundary for
the nonlinear partial differential equation of second order. As far
as we know, this is the first optimal regularity result for
solutions to a free boundary problem of nonlinear degenerate
elliptic equations at the point where an elliptic degenerate curve
meets the free boundary. To achieve this, we construct two sequences
of points such that the corresponding sequences of values of
$\psi_{xx}$ have different limits at $\PtUpL$; this is done by
employing the $C^{2,\alpha}$ regularity of the solution up to
$\PtUpL\PtUpR$ excluding the point $\PtUpL$,
 and by studying detailed features of the free boundary
conditions on the free boundary $\PtUpL\PtLwL$, i.e., the
Rankine-Hugoniot conditions.

We note that the global theory of existence and
regularity of regular reflection configurations for the polytropic
case $\gamma>1$, established in \cite{ChenFeldman} and Sections
2--4, applies to the isothermal case $\gamma=1$ as well. The techniques and ideas
developed in this paper will be useful to other regularity problems
for nonlinear degenerate equations involving similar difficulties.

The regularity for certain degenerate elliptic and parabolic
equations has been studied (cf. \cite{Bet,DH,LW,Wu,Yang} and the
references cited therein). The main feature that distinguishes
equation (\ref{1.1}) from the equations in Daskalopoulos-Hamilton
\cite{DH} and Lin-Wang \cite{LW} is the crucial role of the
nonlinear term $-a\psi_x\psi_{xx}$. Indeed, if $\Aa=0$, then
(\ref{1.1}) becomes a linear equation
\begin{equation}
\label{mainEq-mainLinearTerms}
2x\psi_{xx}+\Ab\psi_{yy}-\psi_x=0.
\end{equation}
Then $\psi_0(x,y):=c x^{3/2}$ is a solution of
(\ref{mainEq-mainLinearTerms}), and $\psi_0$ with $c>0$ also
satisfies the conditions:
\begin{eqnarray}
&\psi>0 \qquad &\mbox{in}\,\,\{x>0\}, \label{1.3}\\
&\psi=0 \qquad &\mbox{on}\,\, \{x=0\}, \label{1.4}\\
&\partial_y\psi=0\qquad \,\,\, &\mbox{on}\,\, \{y=\pm 1\}.
\label{1.4-1}
\end{eqnarray}
Let $\psi$ be a solution of (\ref{mainEq-mainLinearTerms}) in
$Q_1:=(0,1)\times(-1,1)$ satisfying (\ref{1.3})--(\ref{1.4-1}). Then
the comparison principle implies that $\psi\ge\psi_0$ in $Q_1$ for
sufficiently small $c>0$. It follows that the solutions of
(\ref{mainEq-mainLinearTerms}) satisfying (\ref{1.3})--(\ref{1.4-1})
are not even $C^{1,1}$ up to $\{x=0\}$. On the other hand, for the
nonlinear equation (\ref{1.1}) with $\Aa>0$, the function
$\psi(x,y)=\frac{1}{2a}x^2$ is a smooth  solution of (\ref{1.1}) up
to $\{x=0\}$ satisfying (\ref{1.3})--(\ref{1.4-1}). More general
$C^{1,1}$ solutions of (\ref{1.1}) satisfying
(\ref{1.3})--(\ref{1.4-1}) and the condition
\begin{equation}\label{1.5}
-M x\le \psi_x\le \frac{2-\beta}{a}x \qquad \mbox{in}\,\,\, x>0,
\end{equation}
with $M>0$ and $\beta\in (0,1)$, which implies the ellipticity of
(\ref{1.1}), can  be constructed by using the methods of
\cite{ChenFeldman}, and their $C^{2,\alpha}$ regularity up to
$\{x=0\}$ follows from this paper. Another feature of the present
case is that, for solutions of (\ref{1.1}) satisfying
(\ref{1.3})--(\ref{1.4}) and (\ref{1.5}), we compute explicitly
$D^2\psi$ on $\{x=0\}$ to find that $\psi_{xx}(0,y)=\frac{1}{a}$ and
$\psi_{xy}(0,y)=\psi_{yy}(0,y)=0$ for all such solutions. Thus, all
the solutions are separated in $C^{2,\alpha}$ from the solution
$\psi\equiv 0$, although it is easy to construct a sequence of
solutions of (\ref{1.1}) satisfying  (\ref{1.3})--(\ref{1.4}) and
(\ref{1.5}) which converges to
 $\psi\equiv 0$ in $C^{1,\alpha}$.
This shows that the $C^{2,\alpha}$-regularity of (\ref{1.1}) with
conditions   (\ref{1.3})--(\ref{1.4}) and (\ref{1.5})
is a truly
nonlinear phenomenon of degenerate elliptic equations.

\smallskip Some efforts have been also made
mathematically for the shock reflection problem via simplified
models, including the unsteady transonic small-disturbance (UTSD)
equation (cf. Keller-Blank \cite{KB}, Hunter-Keller \cite{HK},
Hunter \cite{hunter1}, Morawetz \cite{Morawetz2}) and the pressure
gradient equation or the nonlinear wave system (cf. Zheng
\cite{Zheng1}, Canic-Keyfitz-Kim \cite{CKK1}). On the other hand, in
order to understand the existence and regularity of solutions near
the important physical points and for the reflection problem, some
asymptotic methods have been also developed (cf. Lighthill
\cite{Lighthill}, Keller-Blank \cite{KB}, Hunter-Keller \cite{HK},
Harabetian \cite{Harabetian}, and Morawetz \cite{Morawetz2}). Also
see Chen \cite{Sxchen} for a linear approximation of shock
reflection when the wedge angle is close to $\frac{\pi}{2}$
 and Serre
\cite{Serre} for an apriori analysis of solutions of shock
reflection and related discussions in the context of the Euler
equations for isentropic and adiabatic fluids. We remark that our
regularity results for potential flow near the sonic circle confirm
rigorously the asymptotic scalings used by Hunter-Keller \cite{HK},
Harabetian \cite{Harabetian}, and  Morawetz \cite{Morawetz2}.
Indeed, the $C^{2,\alpha}$ regularity up to the sonic circle away
from $\PtUpL$ and its proof based on the comparison with an ordinary
differential equation in the radial direction confirm their
asymptotic scaling via the differential equation in that region. The
optimal $C^{1,1}$ regularity at $\PtUpL$ shows that the asymptotic
scaling does not work there, i.e., the angular derivatives become
large, as stated in \cite{Morawetz2}.

\smallskip
The organization of this paper is the following. In Section 2, we
describe the shock reflection problem by a wedge and its solution
with regular reflection configuration when the wedge angle is
suitably large. In Section 3, we establish a regularity theory for
solutions near the degenerate boundary with Dirichlet data for a
class of nonlinear degenerate elliptic equations, in order to study
the regularity of solutions to the regular reflection problem. Then
we employ the regularity theory developed in Section 3 to establish
the optimal regularity of solutions for $\gamma>1$ across the sonic
circle $\PtUpL\PtUpR$ and at the point $\PtUpL$ where the sonic
circle $\PtUpL\PtUpR$ meets the reflected shock $\PtUpL\PtLwL$ in
Section 4. We also established the $C^{2,\alpha}$-regularity of
solutions in the subsonic region up to the sonic circle
$\PtUpL\PtUpR$. We further observe that the existence and regularity
results for regular reflection configurations for the polytropic
case $\gamma>1$ apply to the isothermal case $\gamma=1$.

\medskip
We remark in passing that there may exist a global regular
reflection configuration when state (2) is pseudo-subsonic, which is
in a very narrow regime (see \cite{CF,Neumann}). In this case, the
regularity of the solution behind the reflected shock is direct, and
the main difficulty of elliptic degeneracy does not occur.
Therefore, in this paper, we focus on the difficult case for the
regularity problem when state (2) is pseudo-supersonic, which will
be simply called a regular shock reflection configuration,
throughout this paper.

%

\medskip
\section{Shock Reflection Problem and Regular Reflection Configurations}
\label{RegNrSonicSect}

In this section, we describe the shock reflection problem by a wedge
and its solution with regular reflection configuration when the
wedge angle is suitably large.

The Euler equations for potential flow consist of the conservation
law of mass and the Bernoulli law for the density $\rho$ and the
velocity potential $\Phi$:
\begin{eqnarray}
&&\partial_t\rho + \divg_{\bf x}(\rho\nabla_{\bf x}\Phi)=0, \label{1.1.1} \\
&&\partial_t\Phi +\frac{1}{2}|\nabla_{\bf x}\Phi|^2+i(\rho)=B_0,
\label{1.1.2}
\end{eqnarray}
where $B_0$ is the Bernoulli constant determined by the incoming
flow and/or boundary conditions, and
$$
i'(\rho)=p'(\rho)/\rho=c^2(\rho)/\rho
$$
with $c(\rho)$ being the sound speed. For polytropic gas, by
scaling,
\begin{equation}\label{gamma-law}
p(\rho)=\rho^\gamma/\gamma,\qquad c^2(\rho)=\rho^{\gamma-1}, \qquad
i(\rho)=\frac{\rho^{\gamma-1}-1}{\gamma-1}, \qquad \gamma>1.
\end{equation}

\subsection{Shock Reflection Problem}

When a plane shock in the $({\bf x},t)$--coordinates, ${\bf
x}=(x_1,x_2)\in\R^2$, with left-state $ (\rho,\nabla_{\bf
x}\Phi)=(\rho_1, u_1,0) $ and right-state $(\rho_0, 0,0), u_1>0,
\rho_0<\rho_1$, hits a symmetric wedge
$$
W:=\{(x_1, x_2)\, :\, |x_2|< x_1 \tan\theta_w, x_1>0\}
$$
head on, it experiences a reflection-diffraction process. Then the
Bernoulli law \eqref{1.1.2} becomes
\begin{equation}\label{1.1.2b}
\partial_t\Phi +\frac{1}{2}|\nabla_{\bf x}\Phi|^2+i(\rho)=i(\rho_0).
\end{equation}
This reflection problem can be formulated as the following
mathematical problem.

\medskip
\begin{problem}[Initial-Boundary Value Problem]\label{IVP}  {\it Seek a
solution of the system of equations \eqref{1.1.1} and
\eqref{1.1.2b}, the initial condition at $t=0$:
\begin{equation}\label{initial-condition}
(\rho,\Phi)|_{t=0} =\begin{cases}
(\rho_0, 0) \qquad&  \mbox{for}\,\, |x_2|>x_1\tan\theta_w, x_1>0,\\
(\rho_1, u_1 x_1) \qquad &\mbox{for}\,\, x_1<0,
\end{cases}
\end{equation}
and the slip boundary condition along the wedge boundary $\partial
W$:
\begin{equation}\label{boundary-condition}
\nabla\Phi\cdot \nu|_{\partial W}=0,
\end{equation}
where $\nu$ is the exterior unit normal to $\partial W$ (see Fig.
{\rm 2}).}
\end{problem}

\begin{figure}[h]
\centering
\includegraphics[height=2.5in,width=2.5in]{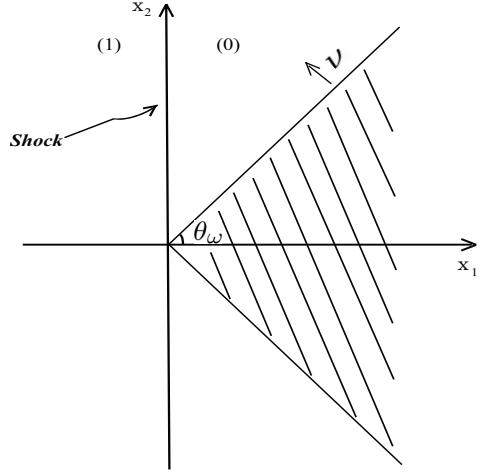}  
\caption[]{Initial-Boundary Value Problem} \label{fig:IBVP}
\end{figure}

\medskip
Notice that the initial-boundary value problem \eqref{1.1.1} and
\eqref{1.1.2b}--\eqref{boundary-condition} is invariant under the
self-similar scaling:
$$
({\bf x}, t)\to (\alpha {\bf x}, \alpha t), \quad (\rho, \Phi)\to
(\rho, \frac{\Phi}{\alpha}) \qquad \quad\mbox{for}\quad \alpha\ne 0.
$$
Thus, we seek self-similar solutions with the form
$$
\rho({\bf x},t)=\rho(\xi,\eta), \quad \Phi({\bf
x},t)=t\,\psi(\xi,\eta) \qquad\quad \mbox{for}\quad
(\xi,\eta)=\frac{{\bf x}}{t}.
$$
Then the pseudo-potential function
$\varphi=\psi-\frac{1}{2}(\xi^2+\eta^2)$
is governed by the following potential flow equation of second
order:
\begin{equation}
\divg\, \big(\rho(|D\varphi|^2, \varphi)D\varphi\big)
+2\rho(|D\varphi|^2, \varphi)=0 \label{1.1.5}
\end{equation}
with
\begin{equation}
\rho(|D\varphi|^2, \varphi)
=\big(\rho_0^{\gamma-1}-(\gamma-1)(\varphi+\frac{1}{2}|D\varphi|^2)\big)^{\frac
1{\gamma-1}}, \label{1.1.6}
\end{equation}
where the divergence $\divg$ and gradient $D$ are with respect to
the self-similar variables $(\xi,\eta)$. Then we have
\begin{equation}\label{c-through-density-function}
c^2=c^2(|D\varphi|^2,\varphi,\rho_0^{\gamma-1})
=\rho_0^{\gamma-1}-(\gamma-1)(\frac{1}{2}|D\varphi|^2+\varphi).
\end{equation}

Equation \eqref{1.1.5} is a nonlinear equation of mixed
elliptic-hyperbolic type. It is elliptic if and only if
\begin{equation}
|D\varphi| < c(|D\varphi|^2,\varphi,\rho_0^{\gamma-1}),
\label{1.1.8}
\end{equation}
which is equivalent to
\begin{equation}
|D \varphi| <c_*(\varphi, \rho_0, \gamma)
:=\sqrt{\frac{2}{\gamma+1}\big(\rho_0^{\gamma-1}-(\gamma-1)\varphi\big)}.
\label{1.1.8a}
\end{equation}
Shocks are discontinuities in the pseudo-velocity $D\varphi$.
 That
is, if $\Omega^+$ and $\Omega^-:=\Omega\setminus\overline{\Omega^+}$
are two nonempty open subsets of $\Omega\subset\R^2$ and
$S:=\partial\Omega^+\cap\Omega$ is a $C^1$--curve where $D\varphi$
has a jump,  then $\varphi\in W^{1,1}_{loc}(\Omega)\cap
C^1(\Omega^\pm\cup S)\cap C^2(\Omega^\pm)$ is a global weak solution
of (\ref{1.1.5}) in $\Omega$ if and only if $\varphi$ is in
$W^{1,\infty}_{loc}(\Omega)$ and satisfies equation \eqref{1.1.5} in
$\Omega^\pm$ and the Rankine-Hugoniot condition on $S$:
\begin{equation}\label{FBConditionSelfSim-0}
\left[\rho(|D\varphi|^2,\varphi)D\varphi\cdot\nu\right]_S=0.
\end{equation}

\medskip
The plane incident shock solution in the $({\bf x},t)$--coordinates
with states $(\rho, \nabla_{\bf x}\Psi)=(\rho_0, 0,0)$ and $(\rho_1,
u_1,0)$ corresponds to a continuous weak solution $\varphi$ of
(\ref{1.1.5}) in the self-similar coordinates $(\xi,\eta)$ with the
following form:
\begin{eqnarray}
&&\varphi_0(\xi,\eta)=-\frac{1}{2}(\xi^2+\eta^2) \qquad
 \hbox{for } \,\, \xi>\xi_0,
 \label{flatOrthSelfSimShock1} \\
&&\varphi_1(\xi,\eta)=-\frac{1}{2}(\xi^2+\eta^2)+ u_1(\xi-\xi_0)
\qquad
 \hbox{for } \,\, \xi<\xi_0,
 \label{flatOrthSelfSimShock2}
\end{eqnarray}
respectively, where
\begin{equation}\label{shocklocation}
\xi_0=\rho_1
\sqrt{\frac{2(\rho_1^{\gamma-1}-\rho_0^{\gamma-1})}{(\gamma-1)(\rho_1^2-\rho_0^2)}}
=\frac{\rho_1u_1}{\rho_1-\rho_0}>0
\end{equation}
is the location of the incident shock, uniquely determined by
$(\rho_0,\rho_1,\gamma)$ through (\ref{FBConditionSelfSim-0}), that
is, $P_0=(\xi_0, \xi_0 \tan\theta_w)$ in Fig. 1. Since the problem
is symmetric with respect to the axis $\eta=0$, it suffices to
consider the problem in the half-plane $\eta>0$ outside the
half-wedge
$$
\Lambda:=\{\xi<0,\eta>0\}\cup\{\eta>\xi \tan\theta_w,\, \xi>0\}.
$$
Then the initial-boundary value problem \eqref{1.1.1} and
\eqref{1.1.2b}--\eqref{boundary-condition} in the $({\bf x},
t)$--coordinates can be formulated as the following boundary value
problem in the self-similar coordinates $(\xi,\eta)$.

\medskip
\begin{problem}\label{BVP}{(Boundary Value Problem)} (see Fig. {\rm 1}). {\it Seek a
solution $\varphi$ of equation \eqref{1.1.5} in the self-similar
domain $\Lambda$ with the slip boundary condition on the wedge
boundary $\partial\Lambda$:
\begin{equation}\label{boundary-condition-3}
D\varphi\cdot\nu|_{\partial\Lambda}=0
\end{equation}
and the asymptotic boundary condition at infinity:
\begin{equation}\label{boundary-condition-2}
\varphi\to\bar{\varphi}:=
\begin{cases} \varphi_0 \qquad\mbox{for}\,\,\,
                         \xi>\xi_0, \eta>\xi \tan\theta_w,\\
              \varphi_1 \qquad \mbox{for}\,\,\,
                          \xi<\xi_0, \eta>0,
\end{cases}
\qquad \mbox{when $\xi^2+\eta^2\to \infty$},
\end{equation}}
where {\rm (\ref{boundary-condition-2})} holds in the sense that $
\displaystyle
\lim_{R\to\infty}\|\varphi-\overline{\varphi}\|_{C(\Lambda\setminus
B_R(0))}=0. $
\end{problem}
\subsection{Existence of Regular Reflection Configurations}

Since $\varphi_1$ does not satisfy the slip boundary condition
\eqref{boundary-condition-3}, the solution must differ from
$\varphi_1$ in $\{\xi<\xi_0\}\cap\Lambda$ and thus a shock
diffraction by the wedge occurs.  In \cite{ChenFeldman}, the
existence of global solution $\varphi$ to Problem 2 has been
established when the wedge angle $\theta_w$ is large,
and the corresponding structure of solution is as follows (see Fig.
\ref{fig:RegularReflection}): The vertical line is the incident
shock $S=\{\xi=\xi_0\}$ that hits the wedge at the point
$P_0=(\xi_0, \xi_0 \tan\theta_w)$, and state (0) and state (1) ahead
of and behind $S$ are given by $\varphi_0$ and $\varphi_1$ defined
in \eqref{flatOrthSelfSimShock1} and \eqref{flatOrthSelfSimShock2},
respectively. The solutions $\varphi$ and $\varphi_1$ differ within
$\{\xi<\xi_0\}$ only in the domain $P_0\PtUpL\PtLwL\PtLwR$ because
of shock diffraction by the wedge vertex, where the curve
$P_0\PtUpL\PtLwL$ is the reflected shock with the straight segment
$P_0\PtUpL$. State (2) behind $P_0\PtUpL$ is of the form:
\begin{equation}\label{state2a}
\varphi_2(\xi,\eta)=-\frac{1}{2}(\xi^2+\eta^2)+u_2(\xi-\xi_0)+
(\eta-\xi_0\tan\theta_w)u_2\tan\theta_w,
\end{equation}
which satisfies
$$
D\varphi\cdot\nu=0 \qquad \hbox{on }\, \partial\Lambda\cap
\{\xi>0\};
$$
the constant velocity $u_2$ and the angle between $P_0 \PtUpL$ and
the $\xi$--axis are determined by $(\theta_w,\rho_0,\rho_1,\gamma)$
from the two algebraic equations expressing
(\ref{FBConditionSelfSim-0}) and the continuous  matching of
$\varphi_1$ and $\varphi_2$ across $P_0 \PtUpL$. Moreover, the
constant density $\rho_2$ of state (2) satisfies $\rho_2>\rho_1$,
and state (2) is supersonic at the point $P_0$. The solution
$\varphi$ is subsonic within the sonic circle for state (2) with
center $(u_2, u_2\tan\theta_w)$ and radius $c_2=
\rho_2^{(\gamma-1)/2}>0$ (the sonic speed of state (2)), and
 $\varphi$ is
supersonic outside this circle containing the arc $\PtUpL\PtUpR$ in
Fig. \ref{fig:RegularReflection}, so that $\varphi_2$ is the unique
solution in the domain $P_0\PtUpL\PtUpR$, as argued in
\cite{ChangChen,Serre}. Then $\varphi$ differs from $\varphi_2$ in
the domain $\Omega=\PtUpL\PtLwL\PtLwR\PtUpR$, where the equation is
elliptic.

Introduce the polar coordinates $(r,\theta)$ with respect to the
center $(u_2, u_2\tan\theta_w)$ of the sonic circle of state (2),
that is,
\begin{equation}\label{coordPolar}
\xi-u_2=r\cos{\theta},\quad \eta-u_2\tan\theta_w=r\sin{\theta}.
\end{equation}
Then, for $\eps\in (0, c_2)$, we denote by
$\Omega_\eps:=\Omega\cap\{(r,\theta)\; : \;0<c_2-r<\eps\}$ the
$\eps$-neighborhood of the sonic circle $\PtUpL\PtUpR$ within
$\Omega$. In $\Omega_\eps$, we introduce the coordinates:
\begin{equation}\label{coordNearSonic}
x=c_2-r, \quad y=\theta-\theta_w.
\end{equation}
This implies that $\Omega_\eps\subset \{0<x<\eps, \; y>0\}$ and
$\PtUpL\PtUpR\subset \{x=0 \; y>0\}$. Also we introduce the
following notation for various parts of $\der\Om$:
\begin{align*}
&\Sonic:=\der \Om \cap \der B_{c_2}((u_2,u_2\tan\theta_w))\equiv
\PtUpL\PtUpR; \\
&\Shock:=\PtUpL\PtLwL;\\
&\Wedge:=\der \Om \cap \der \Lambda\equiv \PtLwR\PtUpR.
\end{align*}
Then the global theory established in \cite{ChenFeldman} indicates
that there exist $\theta_c=\theta_c(\rho_0,\rho_1,\gamma) \in (0,
\frac{\pi}{2})$ and $\alpha=\alpha(\rho_0,\rho_1,\gamma)\in (0,1)$
such that, when $\theta_w\in [\theta_c,\frac{\pi}{2})$, there exists
a global self-similar solution:
$$
\Phi({\bf x}, t) =t\,\varphi(\frac{\bf x}{t}) +\frac{|\bf x|^2}{2t}
\qquad\mbox{for}\,\, \frac{\bf x}{t}\in \Lambda,\, t>0,
$$
with
$$
\rho({\bf x}, t)=\big(\rho_0^{\gamma-1}-(\gamma-1)(\Phi_t
      +\frac{1}{2}|\nabla_{\bf x}\Phi|^2)\big)^{\frac{1}{\gamma-1}}
$$
of Problem {\rm 1} (equivalently, Problem {\rm 2}) for shock
reflection by the wedge, which satisfies that, for
$(\xi,\eta)=\frac{{\bf x}}{t}$,
\begin{equation}\label{phi-states-0-1-2}
\begin{split}
&\varphi\in C^{0,1}(\Lambda),\\
&\varphi\in C^{\infty}(\Omega)\cap C^{1,\alpha}(\bar{\Omega}),\\
&\varphi=\left\{\begin{array}{ll}
\varphi_0 \qquad\mbox{for}\,\, \xi>\xi_0 \mbox{ and } \eta>\xi\tan\theta_w,\\
\varphi_1 \qquad\mbox{for}\,\, \xi<\xi_0
  \mbox{ and above the reflection shock} \,\, P_0\PtUpL\PtLwL,\\
\varphi_2 \qquad \mbox{in}\,\, P_0\PtUpL\PtUpR.
\end{array}\right.
\end{split}
\end{equation}
Moreover,
\begin{enumerate}
\renewcommand{\theenumi}{\roman{enumi}}
\item \label{ellipticityInOmega}
equation \eqref{1.1.5} is elliptic in $\Omega$;

\item \label{phi-GE-phi2}
$
\vphi \ge \vphi_2\qquad\inn\;\;\Om$;

\item\label{BdryReg}
  the
reflected shock $P_0\PtUpL\PtLwL$ is $C^2$ at $\PtUpL$ and
$C^\infty$ except $\PtUpL$;

\item\label{C11NormEstimate}
there exists $\eps_0\in (0,\; \frac{c_2}{2})$ such that $\vphi\in
C^{1,1}(\overline{\Om_{\eps_0}})\cap C^2(\overline{\Om_{\eps_0}}
\setminus \overline\Sonic)$; moreover, in the coordinates {\rm
(\ref{coordNearSonic})},
\begin{equation}\label{parabolicNorm}
\|\vphi-\vphi_2\|^{(par)}_{2,0,\Om_{\eps_0}}
\!:=\!\sum_{0\le k+l
\le 2}\sup_{z\in\Om_{\eps_0}}(x^{k+\frac l2
-2}|\der_x^k\der_y^l(\vphi-\vphi_2)(x,y)|)<\infty;
\end{equation}
\item\label{psi-x-est}
there exists  $\delta_0>0$ so that, in the coordinates {\rm
(\ref{coordNearSonic})},
\begin{equation}\label{ell}
|\partial_x(\vphi-\vphi_2)(x,y)|\le
\frac{2-\delta_0}{\gam+1}x\qquad\;\inn \;\;\Om_{\eps_0};
\end{equation}
\item\label{FBnearSonic}
there exist $\omega>0$ and a function $y=\hat{f}(x)$ such that, in
the coordinates {\rm (\ref{coordNearSonic})},
\begin{equation}\label{OmegaXY}
\begin{split}
&\Om_{\eps_0}=\{(x,y)\,:\,x\in(0,\;\eps_0),\;\; 0< y<\hat{f}(x)\}, \\
&\Shock\cap\{0\le x\le \eps_0\}=\{(x,y):x\in(0,\;\eps_0),\;\;
y=\hat{f}(x)\},
\end{split}
\end{equation}
and
\begin{equation} \label{fhat}
\|\hat{f}\|_{C^{1,1}([0,\;\eps_0])}<\infty\;\;,\;\;
\frac{d\hat{f}}{dx}\!\ge\omega>0\!\;\;\; \text{for}\;\;0<x<\eps_0.
\end{equation}
\end{enumerate}

The existence of state (2) of the form (\ref{state2a}) with constant
velocity $(u_2, u_2\tan\theta_w)$, $u_2>0$, and constant density
$\rho_2>\rho_1$,  satisfying  (\ref{FBConditionSelfSim-0}) and
$\varphi_1=\varphi_2$ on $P_0 \PtUpL$, is shown in \cite[Section
3]{ChenFeldman} for $\theta_w\in [\theta_c,\frac{\pi}{2})$. The
existence of a solution $\varphi$ of Problem 2, satisfying
(\ref{phi-states-0-1-2}) and property (\ref{C11NormEstimate})
follows from \cite[Main Theorem]{ChenFeldman}. Property
(\ref{ellipticityInOmega}) follows from  Lemma 5.2 and  Proposition
7.1 in \cite{ChenFeldman}. Property (\ref{phi-GE-phi2}) follows from
Proposition 7.1 and Section 9 in \cite{ChenFeldman}, which assert
that $\varphi-\varphi_2\in\mathcal K$, where the set $\mathcal K$
defined by (5.15) in \cite{ChenFeldman}, which implies property
(\ref{phi-GE-phi2}). Property (\ref{psi-x-est}) follows from
Propositions 8.1--8.2 and Section 9 in \cite{ChenFeldman}. Property
(\ref{FBnearSonic}) follows from (5.7) and (5.25)--(5.27) in
\cite{ChenFeldman} and the fact that $\varphi-\varphi_2\in\mathcal
K$.

These results have been extended in \cite{ChenFeldman3} to other
wedge-angle cases.



\section{Regularity near the degenerate boundary for
nonlinear degenerate elliptic equations of second order}
\label{proofMainRegThmSect}

In order to study the regularity of solutions to the regular
reflection problem, in this section we first study the regularity of
solutions near a degenerate boundary for a class of nonlinear
degenerate elliptic equations of second order.

We adopt the following definitions for ellipticity and uniform
ellipticity: Let $\Omega\subset\R^2$ be open, $u\in C^2(\Omega)$,
and
\begin{equation}\label{2.1}
 \NGl u =\sum_{i,j=1}^2
A_{ij}(x, Du)u_{ij}+ B(x, Du),
\end{equation}
where $A_{ij}(x,p)$ and $B(x, p)$ are continuous on
$\overline{\Omega}\times\R^2$. The operator $\NGl$ is elliptic with
respect to $u$ in $\Omega$ if the coefficient matrix
$$
A(x,
Du(x)):=[A_{ij}(x, Du(x))]
$$
is positive for every $x\in\Omega$. Furthermore, $\NGl$ is uniformly
elliptic with respect to $u$ in $\Omega$ if
$$
\lambda I\le A(x, Du(x)) \le \Lambda I\qquad\mbox{for every
$x\in\Omega$},
$$
where $\Lambda\ge \lambda>0$ are constants and $I$ is the $2\times
2$ identity matrix.

The following standard comparison principle for the operator $\NGl$
follows from \cite[Theorem 10.1]{GilbargTrudinger}.

\begin{lemma} \label{comparison}
Let $\Omega\subset \R^2$ be an open bounded set. Let $u, v\in
C(\overline\Omega)\cap C^2(\Omega)$ such that the operator $\NGl$ is
elliptic in  $\Omega$ with respect to either $u$ or $v$. Let $\NGl u
\le \NGl v$ in $\Omega$ and $u\ge v$ on $\partial\Omega$.
 Then
$u\ge v$ in $\Omega$.
\end{lemma}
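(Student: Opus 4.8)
I would observe first that this is precisely the comparison principle of \cite[Theorem 10.1]{GilbargTrudinger} in the special case in which the coefficients $A_{ij}$ and $B$ have no zeroth-order dependence (they do not depend on $u$ itself), so that the monotonicity-in-$u$ hypothesis needed there is automatically satisfied. I would then prove it directly, arguing by contradiction and reducing to the linear weak maximum principle \cite[Theorem 3.1]{GilbargTrudinger}. Set $w:=v-u\in C(\overline\Omega)\cap C^2(\Omega)$ and suppose, for contradiction, that $M:=\sup_{\overline\Omega}w>0$. Since $w\le 0$ on $\partial\Omega$ and $\overline\Omega$ is compact, $M$ is attained only at interior points; and for each $\delta\in(0,M)$ the superlevel set $\Omega_\delta:=\{x\in\Omega:w(x)>\delta\}$ is nonempty and open, with $w=\delta$ on $\partial\Omega_\delta$ and $\overline{\Omega_\delta}\subset\{x\in\overline\Omega:w(x)\ge\delta\}$, a compact set that does not meet $\partial\Omega$ (because $w\le 0<\delta$ there); hence $\overline{\Omega_\delta}$ is a compact subset of $\Omega$.

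Next I would linearize the inequality $\mathcal L u\le\mathcal L v$ on $\Omega_\delta$. Assuming, without loss of generality, that $\mathcal L$ is elliptic with respect to $u$ (the case of $v$ is symmetric), I would write
\[
\mathcal L v-\mathcal L u=\sum_{i,j}A_{ij}(x,Du)\,D_{ij}w+\sum_{i,j}\big(A_{ij}(x,Dv)-A_{ij}(x,Du)\big)D_{ij}v+\big(B(x,Dv)-B(x,Du)\big).
\]
As in the proof of that theorem, the last two groups of terms vanish wherever $Dw=0$ (there $Du=Dv$) and can be written as $\sum_k b^k(x)\,D_k w$ with $b^k$ bounded on $\Omega_\delta$, using that $D^2 v$ is bounded on the compact set $\overline{\Omega_\delta}\subset\Omega$. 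Thus, setting $a^{ij}(x):=A_{ij}(x,Du(x))$, the linear operator
\[
L w:=\sum_{i,j}a^{ij}(x)\,D_{ij}w+\sum_k b^k(x)\,D_k w
\]
satisfies $Lw=\mathcal L v-\mathcal L u\ge 0$ in $\Omega_\delta$, has bounded coefficients and no zeroth-order term, and is uniformly elliptic on $\Omega_\delta$: indeed $a^{ij}$ is continuous on the compact set $\overline{\Omega_\delta}$ and, by ellipticity of $\mathcal L$ with respect to $u$, is positive definite there, hence bounded below by a positive constant $\lambda_\delta$.

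Finally, I would apply the weak maximum principle \cite[Theorem 3.1]{GilbargTrudinger} to $L$ on the bounded domain $\Omega_\delta$: since $Lw\ge 0$ and $L$ has no zeroth-order term, $\sup_{\Omega_\delta}w\le\sup_{\partial\Omega_\delta}w=\delta$. But $w$ attains the value $M$ at some point of $\Omega$, which belongs to $\Omega_\delta$ whenever $\delta<M$, so $\sup_{\Omega_\delta}w=M>\delta$ — a contradiction. Therefore $M\le 0$, i.e. $v\le u$ in $\Omega$, which is the claim. I expect the main obstacle to be precisely the reduction of $\mathcal L v-\mathcal L u$ to the linear operator $L$ with bounded coefficients, together with the choice of working on the superlevel sets $\Omega_\delta$ rather than on $\{w>0\}$ itself: this confines the argument to compact subsets of $\Omega$, on which the ellipticity of $A(\cdot,Du(\cdot))$ does not degenerate and $u,v$ are $C^2$, so that the fact that $u$ and $v$ are only continuous up to $\partial\Omega$ — and that $\mathcal L$ may itself degenerate on $\partial\Omega$ — never enters.
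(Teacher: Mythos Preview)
Your proposal is correct and aligns with the paper: the paper itself gives no argument for this lemma beyond citing \cite[Theorem 10.1]{GilbargTrudinger}, and what you have written is precisely a reconstruction of that proof in the present setting (no $z$-dependence, hence the monotonicity hypothesis is vacuous). The only point worth flagging is that writing $A_{ij}(x,Dv)-A_{ij}(x,Du)$ and $B(x,Dv)-B(x,Du)$ as linear expressions $\sum_k b^k D_k w$ uses that $A_{ij},B$ are $C^1$ in $p$, which is hypothesis (iv) in \cite[Theorem 10.1]{GilbargTrudinger} but is not stated explicitly in the lemma here; this causes no trouble, since in every application in the paper the coefficients are smooth.
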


\subsection{Nonlinear Degenerate Elliptic Equations and Regularity Theorem}

We now study the regularity of positive solutions near the
degenerate boundary with Dirichlet data for the class of nonlinear
degenerate elliptic equations of the form:
\begin{align}
\label{mainEq-allTerms}
&\Nl\psi:=(2x-\Aa\psi_x+O_1)\psi_{xx}+O_2\psi_{xy}+(\Ab+O_3)\psi_{yy}-
(1+O_4)\psi_x+O_5\psi_y=0\quad \inn\;\Qr_{\vR, R},\\
\label{mainEq-positive}
&\psi>0\qquad\quad\inn\;\Qr_{\vR, R},\\
\label{mainEq-Dirichlet} &\psi=0\qquad\quad\on\;\partial\Qr_{\vR,
R}\cap\{x=0\},
\end{align}
where $\Aa, \Ab>0$ are constants and, for $\vR, R>0$,
\begin{equation}
\label{defRectangleNearSonic} \Qr_{\vR, R}:=\{(x, y)\;: \;x\in(0,
\vR),\;|y|<R\}\subset\R^2,
\end{equation}
and the terms $O_i(x,y), i=1, \dots, 5$ are continuously
differentiable and
\begin{align}
\label{Oks-int} &\frac{|O_1(x,y)|}{x^2},\;\frac{|O_k(x,y)|}{x}\le
N\qquad\quad&\text{for}\,\;k=2,\dots, 5, \\
\label{Oks-Der-int} &\frac{|DO_1(x,y)|}{x},\;|DO_k(x,y)|\le
N\qquad\quad&\text{for}\;\;k=2,\dots, 5,
\end{align}
in $\{x>0\}$  for some constant $N$.

Conditions (\ref{Oks-int})--(\ref{Oks-Der-int}) imply that the terms
$O_i, i=1, \cdots, 5$, are ``small''; the precise meaning of which
can be seen in Section \ref{OptReg-RegRefl-section} for the shock reflection problem below
(also see the estimates in \cite{ChenFeldman}). Thus, the main terms
of equation (\ref{mainEq-allTerms}) form the following equation:
\begin{equation}\label{mainEq-mainTerms}
(2x-\Aa\psi_x)\psi_{xx}+\Ab\psi_{yy}-\psi_x=0\qquad\quad\inn\;\Qr_{\vR,
R}.
\end{equation}

Equation (\ref{mainEq-mainTerms}) is elliptic  with respect to $\psi$
in $\{x>0\}$ if $\psi_x<\frac{2x}{\Aa}$. In this paper, we consider
the solutions that satisfy
\begin{equation}\label{x-deriv-small}
-Mx\le \psi_x\le{2-\Abeta\over \Aa}x\qquad\quad\inn\;\Qr_{\vR, R}
\end{equation}
for some constants $\AM\ge 0$ and $\Abeta\in(0,1)$. Then
(\ref{mainEq-mainTerms}) is uniformly elliptic in every subdomain
$\{x>\delta\}$ with $\delta>0$. The same is true for equation
(\ref{mainEq-allTerms}) in $\Qr_{\vR, R}$ if $\vR$ is sufficiently
small.

\begin{remark}\label{ellipticityRemark}
If $\hat{\vR}$
is sufficiently small, depending only on $\Aa, \Ab$, and $N$, then
{\rm (\ref{Oks-int})--(\ref{Oks-Der-int})} and {\rm
(\ref{x-deriv-small})} imply that equation {\rm
(\ref{mainEq-allTerms})} is uniformly elliptic with respect to
$\psi$ in $\Qr_{\hat{\vR}, R}\cap \{x>\delta\}$ for any
$\delta\in(0,\frac{\hat{\vR}}{2})$.
We will always assume such a choice of $\hat{\vR}$ hereafter.
\end{remark}

Let $\psi\in
C^2(\Qr_{\hat{\vR}, R})$ be a solution
of (\ref{mainEq-allTerms}) satisfying (\ref{x-deriv-small}).
Remark \ref{ellipticityRemark} implies that the interior regularity
\begin{equation}\label{interRegEq}
\psi\in C^{2,\alpha}(\Qr_{\hat{\vR}, R})\qquad\mbox{for all
}\;\alpha\in(0,1)
\end{equation}
follows first from the linear elliptic theory in two-dimensions (cf.
\cite[Chapter 12]{GilbargTrudinger}) to conclude the solution in
$C^{1,\alpha}$ which leads that the coefficient becomes $C^\alpha$
and then from the Schauder theory to get the $C^{2,\alpha}$ estimate
(cf. \cite[Chapter 6]{GilbargTrudinger}), where we use the fact
$O_i\in C^1(\{x>0\})$. Therefore, we focus on the regularity of
$\psi$ near the boundary $\{x=0\}\cap
\partial \Qr_{\hat{\vR},R}$ where the ellipticity of
(\ref{mainEq-allTerms}) degenerates.

\begin{theorem}[Regularity Theorem]\label{mainRegularityThm}
Let $\Aa, \Ab, M, N, R >0$ and $\Abeta\in (0, \frac{1}{4})$ be
constants. Let $\psi\in C(\overline{\Qr_{\hat{\vR}, R}})\cap
C^2(\Qr_{\hat{\vR}, R})$ satisfy {\rm (\ref{mainEq-positive})}--{\rm
(\ref{mainEq-Dirichlet})}, {\rm (\ref{x-deriv-small})}, and equation
{\rm (\ref{mainEq-allTerms})} in $\Qr_{\hat{\vR}, R}$ with
$O_i=O_i(x,y)$ satisfying $O_i\in C^1(\overline{\Qr_{\hat{\vR},
R}})$ and {\rm (\ref{Oks-int})}--{\rm (\ref{Oks-Der-int})}. Then
$$
\psi\in C^{2,\alpha}(\overline{\Qr_{\hat\vR/2, {R}/{2}}})
\qquad\mbox{for any}\,\, \alp\in (0,1),
$$
with
$$
\psi_{xx}(0,y)=\frac{1}{\Aa}, \quad \psi_{xy}(0,y)=\psi_{yy}(0,y)=0
\qquad \mbox{for all\,\, $|y|<\frac{R}{2}$}.
$$
\end{theorem}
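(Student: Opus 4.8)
The plan is to prove the theorem in two stages: first establish precise pointwise growth and derivative bounds for $\psi$ near $\{x=0\}$ by comparison arguments, and then upgrade these to full $C^{2,\alpha}$ regularity by a delicate scaling/rescaling argument combined with Schauder estimates. The starting point is the ansatz, suggested by the model solution $\psi(x,y)=\frac{1}{2\Aa}x^2$ of \eqref{mainEq-mainTerms}, that the correct leading behavior of any solution satisfying \eqref{mainEq-positive}--\eqref{mainEq-Dirichlet} and \eqref{x-deriv-small} is $\psi(x,y)\sim\frac{1}{2\Aa}x^2$. First I would construct explicit super- and sub-solutions of the form $\frac{1}{2\Aa}x^2\pm C x^{2+\delta}$ (or with a logarithmic correction) for suitable $C,\delta>0$; plugging such a function into the operator $\Nl$ and using the smallness \eqref{Oks-int}--\eqref{Oks-Der-int} of the $O_i$, the ellipticity hypothesis \eqref{x-deriv-small}, and the degeneracy structure $2x-\Aa\psi_x+O_1$, one checks that for $\hat\vR$ small the sign of $\Nl$ on the comparison functions is favorable; then Lemma \ref{comparison} gives
$$
\Bigl|\psi(x,y)-\frac{1}{2\Aa}x^2\Bigr|\le C x^{2+\delta}\qquad\text{in }\Qr_{\hat\vR/2,3R/4}.
$$
From here I would bootstrap to derivative bounds: in $\{x>\delta\}$ equation \eqref{mainEq-allTerms} is uniformly elliptic (Remark \ref{ellipticityRemark}), so interior Schauder estimates on a cube of size comparable to $x$ at a point $(x,y)$, applied to $w:=\psi-\frac{1}{2\Aa}x^2$ after the anisotropic rescaling $(x,y)\mapsto(x_0+x_0 s,\ y_0+\sqrt{x_0}\,t)$ that reflects the parabolic scaling already visible in \eqref{parabolicNorm}, yield
$$
|\psi_x|\le C x,\quad |x\psi_{xx}|+|\sqrt{x}\,\psi_{xy}|+|\psi_{yy}|\le C x,\qquad\text{etc.},
$$
i.e. the analogue of \eqref{parabolicNorm} with the correct constants. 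This already shows $\psi\in C^{1,1}$ up to $\{x=0\}$ with $\psi_x(0,y)=0$.

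The heart of the matter is then to pass to the limit $x\to 0^+$ in the second derivatives and show they have the stated boundary values $\psi_{xx}=1/\Aa$, $\psi_{xy}=\psi_{yy}=0$ with a Hölder modulus. I would do this by the rescaling
$$
\psi^{(x_0,y_0)}(s,t):=\frac{1}{x_0^2}\,\psi\bigl(x_0 s,\ y_0+\sqrt{x_0}\,t\bigr),
$$
defined for $s$ in a fixed interval around $1$ and $t$ in a fixed interval around $0$. Using the growth and derivative bounds from the first stage, the family $\{\psi^{(x_0,y_0)}\}$ is bounded in $C^{2,\alpha}$ on such fixed domains, and the rescaled equation converges, as $x_0\to 0$, to the constant-coefficient limit equation whose relevant solution is $\frac{1}{2\Aa}s^2$ (the $O_i$ terms and the $-\Aa\psi_x$ term scale away by \eqref{Oks-int}, \eqref{Oks-Der-int} and the bound $|\psi_x|\le Cx$). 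A compactness argument then forces $\psi^{(x_0,y_0)}\to\frac{1}{2\Aa}s^2$ in $C^2_{loc}$, which gives $\psi_{xx}(x_0,y_0)\to1/\Aa$ and $\psi_{xy},\psi_{yy}\to0$; quantifying the rate of convergence (via the $x^{2+\delta}$ error in the comparison step and Schauder estimates on the difference of the rescaled solution and its limit) produces the Hölder continuity of $D^2\psi$ up to $\{x=0\}$, completing the proof that $\psi\in C^{2,\alpha}(\overline{\Qr_{\hat\vR/2,R/2}})$.

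The main obstacle I anticipate is the second stage: getting a \emph{uniform} (in $x_0,y_0$) $C^{2,\alpha}$ bound on the rescaled functions and a quantitative rate of convergence, because the rescaled equation is only \emph{asymptotically} uniformly elliptic — the coefficient of $\psi^{(x_0,y_0)}_{ss}$ is $2s-\Aa\,\psi^{(x_0,y_0)}_s+O_1/x_0^2$, which is uniformly elliptic on $s\in[1/2,3/2]$ only because of the a priori bound $|\psi_x|\le Cx$ established in stage one, and the constants degenerate as $s\to0$. So the comparison-function construction in stage one must be sharp enough that the error exponent $\delta$ and the constant $C$ are controlled purely by $\Aa,\Ab,M,N,\Abeta,R$; this is exactly where the restriction $\Abeta\in(0,\tfrac14)$ and the choice of $\hat\vR$ small will be used, and where the bulk of the technical work lies. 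A secondary difficulty is handling the behavior near the lateral boundary pieces $\{|y|=R\}$ and showing the estimates survive the shrinkage from $R$ to $R/2$; this is routine given interior estimates away from $\{|y|=R\}$ together with the fact that no boundary condition is imposed there, so one simply works in slightly smaller strips at each step.
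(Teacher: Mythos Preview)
Your two-stage strategy---comparison arguments to pin down $|\psi-\tfrac{1}{2\Aa}x^2|\le Cx^{2+\delta}$, followed by parabolic rescaling and Schauder---is exactly the architecture of the paper's proof, and the rescaling you write down is essentially the one used there (applied to $W:=\tfrac{x^2}{2\Aa}-\psi$ and normalized by $x_0^{2+\alpha}$ rather than to $\psi$ normalized by $x_0^2$). However, the first stage as you describe it has a genuine gap.

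The barriers $\tfrac{1}{2\Aa}x^2\pm Cx^{2+\delta}$ are purely $x$-dependent, and the comparison principle requires the ordering on the \emph{entire} boundary of the rectangle $\Qr_{r,R'}$, including the lateral pieces $\{|y|=R'\}$. There you know only $0<\psi\le \tfrac{2-\beta}{2\Aa}x^2$, so for the subsolution $\tfrac{1}{2\Aa}x^2-Cx^{2+\delta}$ you cannot verify $\le\psi$ on $\{|y|=R'\}$ for small $x$; this is not a routine shrinkage issue. The paper resolves this in two steps you have skipped. First, one must prove a \emph{quadratic lower bound} $\psi\ge\mu x^2$ (separating $\psi$ from the trivial solution $\psi\equiv 0$, which satisfies all the hypotheses except the conclusion): this is done with a barrier of the form $w=\mu x^2(1-y^2)-kxy^2$, whose point is that $w(x,\pm 1)=-kx\le 0<\psi$, so the lateral comparison is automatic. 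Second, with $\psi\ge\mu x^2$ in hand (equivalently $W\le\tfrac{1-\mu_1}{2\Aa}x^2$), one uses $y$-dependent barriers like $A_1x^{2+\alpha}(1-y^2)+B_1x^2y^2$ with $B_1=\tfrac{1-\mu_1}{2\Aa}$, so that on $\{|y|=1\}$ the barrier equals $B_1x^2\ge W$. The $y$-structure of the barriers and the preliminary quadratic lower bound are the missing ideas; without them stage one does not close.

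One smaller point: in your rescaled equation the nonlinear term $-\Aa\psi_x\psi_{xx}$ does \emph{not} scale away. Under $(x,y)\mapsto(x_0 s,\,y_0+\sqrt{x_0}\,t)$ one gets $2x-\Aa\psi_x\mapsto x_0(2s-\Aa\psi^{(x_0,y_0)}_s)$, so the limit equation is still the nonlinear $(2s-\Aa\psi_s)\psi_{ss}+\Ab\psi_{tt}-\psi_s=0$. This does not wreck your compactness step, because the pointwise bound from stage one already forces the limit to be $\tfrac{s^2}{2\Aa}$; but the cleaner route (and the one taken in the paper) is to rescale $W$ by $x_0^{2+\alpha}$, obtain a uniformly elliptic equation with uniformly bounded right-hand side directly from $|W|\le Cx^{2+\alpha}$, and read off the $C^{2,\alpha}$ bound from a single application of interior Schauder, bypassing compactness entirely.
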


To prove Theorem \ref{mainRegularityThm}, it suffices to show that,
for any given $\alpha\in (0,1)$,
\begin{equation}
\label{H} \psi\in
C^{2,\alpha}(\overline{\Qr_{r,R/2}})\qquad\;\text{for some}\; r\in
(0,\;\hat r/2),
\end{equation}
since $\psi$ belongs to $C^{2,\alpha}(\overline{\Qr_{ \hat
r/2,R/2}\cap \{x>r/2\}})$ by (\ref{interRegEq}).

%
%
%


Note that, by (\ref{mainEq-positive})--(\ref{mainEq-Dirichlet}) and
(\ref{x-deriv-small}), it follows that
\begin{equation}
\label{quadraticBound} 0<\psi(x,y)\le {2-\Abeta\over 2\Aa}x^2
\qquad\mbox{for all } \;(x,y)\in\Qr_{\hat{\vR}, R}.
\end{equation}

The essential part of the proof of Theorem \ref{mainRegularityThm}
is to show that, if a solution $\psi$ satisfies
(\ref{quadraticBound}), then, for any given $\alp\in (0,\;1)$, there
exists $r\in (0, \hat{r}/2]$ such that
\begin{equation}
\label{2-plus-alpha-Growth-Bound} |\psi(x,y)-{1\over 2\Aa}x^2|\le
Cx^{2+\alpha}  \qquad\mbox{for all } \;(x,y)\in\Qr_{r, 7R/8}.
\end{equation}
Notice that, although $\psi^{(0)}\equiv 0$ is a solution of
(\ref{mainEq-allTerms}),
it satisfies neither (\ref{2-plus-alpha-Growth-Bound}) nor the
conclusion $\psi^{(0)}_{xx}(0,y)=\frac{1}{\Aa}$ of Theorem
\ref{mainRegularityThm}. Thus it is necessary to improve first the
lower bound of $\psi$ in (\ref{quadraticBound}) to separate our
solution from the trivial solution $\psi^{(0)}\equiv 0$.

\subsection{Quadratic Lower Bound of $\psi$}
By Remark \ref{ellipticityRemark}, equation (\ref{mainEq-allTerms})
is uniformly elliptic  with respect to $\psi$ inside
$\Qr_{\hat{\vR}, R}$. Thus, our idea is to construct a positive
subsolution of (\ref{mainEq-allTerms}), which provides our desired
lower bound of $\psi$.

\begin{proposition} \label{h}
Let $\psi$ satisfy  the assumptions in Theorem {\rm
\ref{mainRegularityThm}}. Then there exist $r\in (0, \hat{r}/2]$ and
$\mu>0$, depending only on $\Aa, \Ab, N,  R, \hat{\vR}, \Abeta$, and
$\inf_{\Qr_{\hat{\vR}, R}\cap\{x>{\hat{\vR}}/{2}\}}\psi$, such that
$$
\psi(x,y)\ge \mu x^2\;\qquad\;\text{on}\;\;\Qr_{r, {15R}/{16}}.
$$
\end{proposition}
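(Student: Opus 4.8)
The plan is to construct an explicit positive subsolution of the nonlinear operator $\Nl$ that behaves like $\mu x^2$ near $\{x=0\}$, and then invoke the comparison principle (Lemma~\ref{comparison}) on a suitable small rectangle. The natural candidate, motivated by the fact that $\frac{1}{2\Aa}x^2$ solves the principal equation \eqref{mainEq-mainTerms} exactly, is something of the form
\[
 w(x,y) = \mu x^2 - A x^{2+\delta} - B (R^2 - y^2)\, x^2
\]
(or a variant such as $\mu x^2 (1 - \kappa x^\delta)$ combined with a $y$-cutoff), where $\mu, A, B, \delta > 0$ are small constants to be chosen. The role of the $-Ax^{2+\delta}$ term is to create a strictly negative contribution to $\Nl w$ of order $x^{\delta}$ relative to the main balance, which will dominate the ``error'' terms coming from the $O_i$; the role of the $-B(R^2-y^2)x^2$ term is to force $w \le 0$ on the lateral boundary $\{|y| = \text{(something close to } R)\}$ so that on that part of the boundary the inequality $w \le \psi$ holds trivially from $\psi > 0$.

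The steps, in order, would be: (1) compute $\Nl w$ using \eqref{mainEq-allTerms}, keeping in mind that along $w$ itself we have $w_x \approx 2\mu x$, $w_{xx}\approx 2\mu$, so the principal part $(2x - \Aa w_x)w_{xx} - w_x = (2x - 2\Aa\mu x)(2\mu) - 2\mu x + \text{lower order} = (4\mu - 4\Aa\mu^2 - 2\mu)x + \dots = 2\mu(1 - 2\Aa\mu)x + \dots$, which is \emph{positive} for $\mu < \frac{1}{2\Aa}$ — this is the obstacle to $w$ being a subsolution, and it is exactly why the corrector terms $-Ax^{2+\delta}$ and the $y$-term are needed, since their second derivatives contribute negatively at the right order. (2) Use \eqref{x-deriv-small} and the bounds \eqref{Oks-int}--\eqref{Oks-Der-int} (with the smallness of $\hat\vR$ from Remark~\ref{ellipticityRemark}) to show that all the $O_i$-contributions and the cross/lower-order terms are bounded by $C N x^{2}$ or better, hence absorbed by the $-c A x^{\delta}\cdot x = -cAx^{1+\delta}$ gain once $\hat\vR$ (equivalently $r$) is taken small; choose $\delta$, then $A$ large relative to the constants, then $\mu$ small, then $r$ small, in that dependency order, to get $\Nl w \le 0$ in $\Qr_{r,R'}$ for an appropriate $R' $ slightly larger than $15R/16$. (3) Check the boundary inequality $w \le \psi$ on $\partial \Qr_{r,R'}$: on $\{x=0\}$ both vanish; on $\{|y|=R'\}$ we arrange $w\le 0<\psi$ by the $B$-term; on $\{x=r\}$ we need $w(r,y)\le \psi(r,y)$, and here we use that $\psi$ is bounded below on $\{x \ge \hat\vR/2\}$ by its infimum — actually we need the lower bound on the slice $\{x=r\}$ which is more delicate since $r$ may be small, so instead one compares on a fixed-width strip: first establish $\psi \ge c_1 > 0$ on $\{x = \hat\vR/2\}$ (from $\inf_{\{x>\hat\vR/2\}}\psi$ and continuity), run the comparison on $\Qr_{\hat\vR/2, R'}$ rather than on $\Qr_{r,R'}$, and choose $\mu$ also small enough that $\mu(\hat\vR/2)^2 \le c_1$ so that $w \le \psi$ on $\{x=\hat\vR/2\}$ as well. (4) Conclude $\psi \ge w$ in $\Qr_{\hat\vR/2,R'}$, and since $w(x,y) \ge \mu x^2 - Ax^{2+\delta} - B(R'^2-y^2)x^2 \ge \frac{\mu}{2}x^2$ for $x$ small and $|y|$ restricted to $|y| < 15R/16$ (choosing $R'$, $B$ so that $B(R'^2 - (15R/16)^2) \le \mu/4$, and $r$ small so $Ax^\delta \le \mu/4$), this yields $\psi \ge \frac{\mu}{2}x^2$ on $\Qr_{r,15R/16}$ after relabelling $\mu/2 \to \mu$.

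The main obstacle, as indicated, is \textbf{the wrong sign of the principal part} of $\Nl w$ when $w$ is a pure multiple of $x^2$: since $\Nl(\tfrac{1}{2\Aa}x^2)=0$ exactly and $\Nl$ is elliptic, any smaller multiple $\mu x^2$ with $\mu<\frac{1}{2\Aa}$ is a \emph{supersolution}, not a subsolution, of the principal operator. Thus the entire game is to engineer a negative correction of the precise order (namely $o(x)$ but larger than the $O(x^2)$ error terms, so order $x^{1+\delta}$ with $\delta\in(0,1)$) that is strong enough to flip the sign while still being small enough not to destroy positivity of $w$ near $\{x=0\}$; getting the hierarchy of constant choices ($\delta$, then $A$, then $\mu$, then $\hat\vR$/$r$) consistent is the technical heart of the argument. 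A secondary subtlety is the boundary comparison on the inner slice $\{x=\hat\vR/2\}$, handled by exploiting the strictly positive infimum of $\psi$ there and choosing $\mu$ accordingly, which is why that infimum appears in the statement's list of dependencies.
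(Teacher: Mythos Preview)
Your overall strategy---construct an explicit lower barrier and invoke Lemma~\ref{comparison}---is the right one, and it matches the paper's. But the central ``obstacle'' you identify is a sign error, and it sends the rest of the argument in the wrong direction. Since $\Nl$ has positive-definite second-order part, a \emph{subsolution} (lower barrier) is a function with $\Nl w \ge 0$; this is exactly how the paper uses the word (``a strict subsolution $\Nl w>0$'') and is consistent with the comparison Lemma~\ref{comparison} (take $u=\psi$, $v=w$: one needs $\Nl\psi\le\Nl w$, i.e.\ $0\le\Nl w$). Your own computation gives $\Nl(\mu x^2)=2\mu(1-2\Aa\mu)x + O(x^2)>0$ for small $\mu$ and small $x$, so the pure quadratic is \emph{already} a subsolution of the principal operator---there is no sign to ``flip,'' and the $-Ax^{2+\delta}$ corrector you build to do so is aimed at a nonexistent problem.

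The genuine difficulty lies where you only sketch it: enforcing $w\le\psi$ on the lateral boundary $\{|y|=R'\}$. Near $x=0$ one has no quantitative lower bound on $\psi$ there (that is precisely what is being proved), so one must arrange $w\le 0$ on those sides. Your corrector $-B(R^2-y^2)x^2$ has the wrong monotonicity in $|y|$: it is most negative at $y=0$ (where you want $w\approx\mu x^2>0$) and vanishes near $|y|=R$, so it cannot simultaneously make $w\le 0$ on the sides and leave $w\gtrsim \mu x^2$ on the center line. The paper instead takes $w=\mu x^2(1-y^2)-kxy^2$ (after normalizing $R$): the $y$-dependent pieces vanish at $y=0$ and pull $w$ down to $-kx\le 0$ at $|y|=1$. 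The $-kxy^2$ term, being linear in $x$, contributes $+ky^2$ through the first-order term $-\psi_x$ and thereby keeps $\Nl w>0$ even near the lateral boundary; the balance of $k/\mu$ against the (now negative) $\Ab w_{yy}$ term is the actual delicate point. Finally, for the right edge $\{x=r\}$ the paper uses the Harnack inequality to propagate the positive infimum from $\{x>\hat\vR/2\}$ down to $\{x>r\}$, giving $\psi\ge\sigma(r)$ there and allowing $\mu\le\sigma(r)/r^2$; this is cleaner than your fixed-strip alternative and explains why $\inf_{\{x>\hat\vR/2\}}\psi$ enters the constants.
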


\begin{proof}
In this proof, all the constants below depend only on the data,
i.e., $\Aa, \Ab, M, N,  R, \hat{\vR}, \Abeta$, and
$\inf_{\Qr_{\hat{\vR}, R}\cap\{x>{\hat{\vR}}/{2}\}}\psi$, unless
otherwise is stated.

Fix $y_0$ with $|y_0|\le \frac{15R}{16}$. We now prove that
\begin{equation}\label{lowerQuadrBd}
\psi(x,y_0)\ge \mu x^2\;\;\qquad\text{for}\;\;x\in (0, r).
\end{equation}
We first note that,  without loss of generality, we may assume that
$R=2$ and $y_0=0$. Otherwise, we set
$\tilde\psi(x,y):=\psi(x,y_0+\frac{R}{32}y)$ for all $(x,y)\in
\Qr_{\hat{\vR}, 2}$. Then $\tilde\psi\in C(\overline{\Qr_{\hat{\vR},
2}})\cap C^2(\Qr_{\hat{\vR}, 2})$ satisfies equation
(\ref{mainEq-allTerms}) with (\ref{Oks-int}) and conditions
(\ref{mainEq-positive})--(\ref{mainEq-Dirichlet}) and
(\ref{x-deriv-small}) in $\Qr_{\hat{\vR}, 2}$, with some modified
constants  $\Aa, \Ab, N, \Abeta$ and functions $O_i$, depending only
on the corresponding quantities in the original equation and on $R$.
Moreover, $\inf_{\Qr_{\hat{\vR},
2}\cap\{x>{\hat{\vR}}/{2}\}}\tilde\psi=\inf_{\Qr_{\hat{\vR},
R}\cap\{x>{\hat{\vR}}/{2}\}}\psi$. Then (\ref{lowerQuadrBd}) for
$\psi$ follows from (\ref{lowerQuadrBd}) for $\tilde\psi$ with
$y_0=0$ and $R=2$. Thus we will keep the original notation with
$y_0=0$ and $R=2$. Then it suffices to prove
\begin{equation}\label{lowerQuadrBdzero}
\psi(x,0)\ge \mu x^2\;\;\qquad\text{for}\;\;x\in (0, r).
\end{equation}

By Remark \ref{ellipticityRemark} and the Harnack inequality, we
conclude that, for any $r\in (0, \hat{\vR}/2)$, there exists
$\sigma=\sigma(r)>0$ depending only on $r$ and the data $\Aa, \Ab,
N,  R, \hat{\vR}, \Abeta$, and $\inf_{\Qr_{\hat{\vR},
R}\cap\{x>{\hat{\vR}}/{2}\}}\psi$, such that
\begin{equation}\label{lowerConstBd}
\psi\ge\sigma\quad\qquad\text{on }\; \Qr_{\hat{\vR},
{3}/{2}}\cap\{x>r\}.
\end{equation}
Let $r\in (0, \hat{\vR}/2)$, $k>0$, and
\begin{equation}
\label{muDef} 0<\mu\le \frac{\sigma(r)}{r^2}
\end{equation}
to be chosen.
Set
\begin{equation}
\label{w-Def}
w(x,y):=\mu x^2(1-y^2)-kxy^2.
\end{equation}
Then, using (\ref{lowerConstBd})--(\ref{muDef}), we obtain that, for
all $x\in (0, r)$ and $|y|<1$,
\begin{equation*}\begin{cases}
w(0,y)=0 \le \psi(0,y),\\
w(r,y)\le \mu r^2 \le \psi(r,y),&   \\
w(x,\pm 1) =-kx \le 0 \le \psi(x,\pm 1).
\end{cases}\end{equation*}
Therefore, we have
\begin{equation}
\label{o} w\le \psi\;\qquad \text{on}\;\der\Qr_{r, 1}.
\end{equation}

Next, we show that $w$ is a strict subsolution  $\Nl w > 0$
 in  $\Qr_{r, 1}$,
 if the parameters are chosen appropriately.
In order to estimate $\Nl w$, we denote
\begin{equation}\label{defA}
 A_0:=\frac{k}{\mu}
 \end{equation}
and notice that
$$
w_{yy}=-2x(\mu x+k)=-2x(\mu x+k)\big((1-y^2)+y^2\big) =-2\mu
x(1-y^2)(x+A_0)-2ky^2x(\frac{x}{A_0}+1).
$$
Then, by a direct calculation and simplification, we obtain
\begin{equation} \label{a}
\Nl w=2\mu x(1\!-\!y^2)I_1+ky^2I_2,
\end{equation}
where
\begin{align*}
&I_1=
1\!-\!2\mu\Aa(1\!-\!y^2\!)\!-\!O_4\!+\!\frac{O_1}{x}\!-\!\big((\Ab\!+\!O_3)
-yO_5\big)(x\!+\!A_0)\!-\!\frac{y(2x\!+\!A_0)}{x}O_2,\\
&I_2= (1\!+\!O_4)\!+\!2\mu\Aa(1\!-\!y^2)\!-
\!2(\Ab\!+\!O_3)x(\frac{x}{A_0}\!+\!1)\!-\!2y
x(\frac{x}{A_0}\!+\!1)O_5\!- \!2y(\frac{2x}{A_0}\!+\!1)O_2.
\end{align*}
Now we choose $r$ and $\mu$ so that $\Nl w\ge 0$ holds. Clearly,
$\Nl w \ge 0$ if $I_1, I_2\ge 0$. By (\ref{Oks-int}), we find that,
in $\Qr_{r, 1}$,
\begin{equation}\label{I-termsEst}
\begin{split}
&I_1\ge 1-2\mu\Aa-C_0 r-(\Ab+N+C_0 r)A_0,\\
&I_2\ge 1-C_0 r-\frac{r}{A_0}C_0 r.
\end{split}
\end{equation}
Choose $r_0$ to satisfy the smallness assumptions stated above and
\begin{equation} \label{b}
0< r_0\le \min\{\frac{1}{4C_0},\frac{\Ab+N}{C_0},
\frac1{8\sqrt{C_0(\Ab +N)}}, \frac{\hat{\vR}}{2}\},
\end{equation}
where $C_0$ is the constant in (\ref{I-termsEst}). For such a fixed
$r_0$, we choose $\mu_0$ to satisfy (\ref{muDef}) and
\begin{equation}
\label{t1}
\mu_0 \le \frac{1}{8\Aa},
\end{equation}
and $A_0$ to satisfy
\begin{equation}
\label{t2} 4C_0 r_0^2<A_0<\frac{1}{8(\Ab+N)},
\end{equation}
where we have used (\ref{b}) to see that $4C_0
r_0^2<\frac{1}{8(\Ab+N)}$ in (\ref{t2}). Then $k$ is defined from
(\ref{defA}).
{}From (\ref{I-termsEst})--(\ref{t2}),
$$
I_1,\;I_2 > 0,
$$
which implies that
\begin{equation} \label{n}
\Nl w> 0\qquad \text{in }\;\Qr_{r, 1}
\end{equation}
whenever  $r\in (0,\; r_0]$ and $\mu\in(0,\;\mu_0]$.

By (\ref{o}), (\ref{n}),  Remark \ref{ellipticityRemark}, and the
comparison principle (Lemma \ref{comparison}), we have
$$
\psi(x,y)\ge w(x,y)=\mu x^2(1-y^2)-kxy^2\;\qquad\text{in}\;\;\Qr_{r,
1}.
$$
In particular,
\begin{equation}\psi(x,0)\ge \mu
x^2\;\qquad \text{for}\;\;x\in [0,\; r].
\end{equation}
This implies  (\ref{lowerQuadrBdzero}), thus (\ref{lowerQuadrBd}).
The proof is completed.
\end{proof}

 With Proposition \ref{h}, we now make the $C^{2,\alpha}$ estimate of
$\psi$.

\subsection{${\boldsymbol C^{2,\alpha}}$ Estimate of ${\boldsymbol \psi}$}
If $\psi$ satisfies
(\ref{mainEq-allTerms})--(\ref{mainEq-Dirichlet}) and
(\ref{x-deriv-small}), it is expected that $\psi$ is ``very close"
to $\frac{x^2}{2\Aa}$, which is a solution to
(\ref{mainEq-mainTerms}). More precisely,  we now prove
(\ref{2-plus-alpha-Growth-Bound}). To achieve this, we study the
function
\begin{equation}\label{DefWfunct}
W(x,y):=\frac{x^2}{2\Aa}-\psi(x,y).
\end{equation}
By (\ref{mainEq-allTerms}), $W$ satisfies
\begin{align}
\label{c}
\begin{split}
&\Ml W:=\!(x\!+\!\Aa W_x\!+\!O_1)W_{xx}\!\!+\!O_2W_{xy} \!\\
&\phantom{aaaaa}\,\,\,\,\, +\!(\Ab
\!+\!O_3)W_{yy}\!-\!(2\!+\!O_4)W_x\!+\!O_5W_y\!\!=\frac{O_1-xO_4}{\Aa}
\qquad \inn\;\Qr_{\hat{\vR}, R},
\end{split}
\\
&W(0,y)=0\qquad\qquad\qquad\qquad\qquad\qquad\qquad
\on\;\partial\Qr_{\hat{\vR}, R}\cap\{x=0\}, \label{Dirichlet-W}
\\
\label{deriv-W} &-{1-\Abeta\over \Aa}x\le W_x(x,y) \le (\AM+{1\over
\Aa})x \qquad\qquad \inn\;\Qr_{\hat{\vR}, R}.
\end{align}


\begin{lemma} \label{j}
Let $\Aa, \Ab, N, R, \hat{\vR}, \Abeta$, and $O_i$ be as in Theorem
{\rm \ref{mainRegularityThm}}. Let $\mu$ be the constant determined
in Proposition {\rm \ref{h}}. Then there exist $\alp_1\in (0,\;1)$
and $r_1>0$ such that, if $W\in C(\overline{\Qr_{\hat{\vR}, R}})\cap
C^2(\Qr_{\hat{\vR}, R})$ satisfies {\rm (\ref{c})}--{\rm
(\ref{deriv-W})}, then
\begin{equation}\label{estLemma-j}
W(x,y)\le \frac{1-\mu_1}{2\Aa r^{\alp}}x^{2+\alp}
 \;\;\qquad\text{in}\;\;\Qr_{r, 7R/8},
\end{equation}
whenever $\alp\in (0,\;\alp_1]$ and $r\in (0, r_1]$ with
$\mu_1:=\min(2a\mu, 1/2)$.
\end{lemma}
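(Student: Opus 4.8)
My plan is to deduce \eqref{estLemma-j} from the comparison principle, Lemma~\ref{comparison}, by comparing $W$ with a supersolution of \eqref{c} built around the function $v_1(x):=\frac{1-\mu_1}{2\Aa r^{\alp}}x^{2+\alp}$. This $v_1$ vanishes on $\{x=0\}$ as does $W$, and $v_1(r)=\frac{1-\mu_1}{2\Aa}r^2$ dominates $W$ on $\{x=r\}$ because Proposition~\ref{h} gives $\psi\ge\mu x^2$, hence $W\le\frac{1-2\Aa\mu}{2\Aa}x^2\le\frac{1-\mu_1}{2\Aa}x^2$ (recall $\mu_1=\min(2\Aa\mu,\tfrac12)\le 2\Aa\mu$). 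Since $v_1$ depends only on $x$, $\Ml v_1=(x+\Aa v_1'+O_1)v_1''-(2+O_4)v_1'$, whose principal part $xv_1''-2v_1'=(2+\alp)(\alp-1)\,\frac{1-\mu_1}{2\Aa r^\alp}x^{1+\alp}$ is strictly negative since $\alp<1$. This $O(x^{1+\alp})$ negativity outweighs the remaining terms: the $O_i$-contributions are $O(x^{2+\alp})$ by \eqref{Oks-int}, while the quasilinear self-term $\Aa v_1'v_1''=O(x^{1+2\alp})$ bears ratio $\tfrac{(1-\mu_1)(2+\alp)(1+\alp)}{2(1-\alp)}(x/r)^{\alp}$ to the principal part, which is $<1$ once $\alp$ is small (here $\mu_1>0$ is used) and $x\le r$. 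Hence, for $\alp\le\alp_1$ and $r\le r_1$ depending only on the data and on $\mu$, one gets $\Ml v_1\le\Ml W$ in $\Qr_{r,R}$, and for such $r$ the operator $\Ml$ is uniformly elliptic with respect to $v_1$ (indeed $x+\Aa v_1'+O_1\ge x/2$, $\Ab+O_3\ge\Ab/2$, $|O_2|\le Nx$).

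What $v_1$ lacks is domination of $W$ on the lateral part $\{|y|=7R/8\}$, where a priori only $W\le\frac{1-\mu_1}{2\Aa}x^2$ is known, exceeding $v_1=\frac{1-\mu_1}{2\Aa}(x/r)^\alp x^2$. I would raise $v_1$ there by a nonnegative correction $g$ that (i) vanishes to order $x^2$ on $\{x=0\}$, so as not to spoil the matching, the ellipticity, or the equation as $x\to0$, and (ii) makes $v_1+g\ge\frac{1-\mu_1}{2\Aa}x^2$ on $\{|y|=7R/8\}$; to keep the sharp constant $\frac{1-\mu_1}{2\Aa r^\alp}$ in the conclusion, $g$ should also vanish at the point where the estimate is extracted. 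Using the $y$-translation device from the proof of Proposition~\ref{h}, I would fix $y_0$ with $|y_0|\le7R/8$, work on a rectangle $Q$ centred at $y_0$ of half-width $\ell$ small enough that $Q\subset\Qr_{r,15R/16}$, and take $g$ of the type $\frac{1-\mu_1}{2\Aa}x^2\big(1-(x/r)^\alp\big)\tfrac{(y-y_0)^2}{\ell^2}$, which vanishes at $x=0$, at $x=r$, and at $y=y_0$, while $v_1+g\ge\frac{1-\mu_1}{2\Aa}x^2$ on $\{|y-y_0|=\ell\}$. Lemma~\ref{comparison} would then give $W\le v_1+g$ on $Q$, so $W(x,y_0)\le v_1(x)=\frac{1-\mu_1}{2\Aa r^\alp}x^{2+\alp}$; as $y_0$ ranges over $[-7R/8,7R/8]$, this is \eqref{estLemma-j}.

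The step I expect to be the main obstacle is verifying $\Ml(v_1+g)\le\Ml W$ in the interior of $Q$. Because $\Ml$ is quasilinear, $\Ml(v_1+g)$ is \emph{not} $\Ml v_1$ plus a linear expression in $g$: the term $\Aa(v_1+g)_x(v_1+g)_{xx}$ creates cross terms of order $x^{1+\alp}$ (essentially $\Aa v_1''g_x+\Aa v_1'g_{xx}$) that compete head-on with the good term from $\Ml v_1$, besides contributions of $g$ alone ($xg_{xx}-2g_x$, $\Aa g_xg_{xx}$, $\Ab g_{yy}$) of orders $x$ and $x^2$. Making these absorbable is exactly where the smallness of $\alp$ (depending on $\mu_1$, hence on the data) and of $r$, and the constraint $\Abeta<\tfrac14$ with the quantitative gap it produces in Proposition~\ref{h}, must be exploited; this is the genuinely delicate computation, reflecting the remark after \eqref{mainEq-mainTerms} that the nonlinear coupling $-\Aa\psi_x\psi_{xx}$ is precisely what separates this equation from its linear and degenerate-parabolic counterparts.
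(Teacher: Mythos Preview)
Your approach is correct and, once unpacked, is literally the paper's proof. Your comparison function
\[
v_1+g=\frac{1-\mu_1}{2\Aa r^\alp}x^{2+\alp}+\frac{1-\mu_1}{2\Aa}x^2\Big(1-(x/r)^\alp\Big)\frac{(y-y_0)^2}{\ell^2}
=\frac{1-\mu_1}{2\Aa}x^2\Big[(x/r)^\alp\big(1-\tfrac{(y-y_0)^2}{\ell^2}\big)+\tfrac{(y-y_0)^2}{\ell^2}\Big]
\]
becomes, after the rescaling $y\mapsto y_0+\ell y$ (which is exactly the paper's reduction to $R=2$, $y_0=0$), the function
\[
v(x,y)=A_1x^{2+\alp}(1-y^2)+B_1x^2y^2,\qquad A_1r^\alp=B_1=\frac{1-\mu_1}{2\Aa},
\]
that the paper uses. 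So your ``main obstacle'' --- controlling the nonlinear cross terms in $\Ml(v_1+g)$ --- is precisely the computation the paper carries out. It organizes $\Ml v-\Ml W$ as $(2+\alp)A_1x^{1+\alp}(1-y^2)J_1+2B_1xy^2J_2$ and shows $J_1,J_2<0$; the decisive inequality is
\[
(1+\alp)\Big(1+\tfrac{2+\alp}{2}(1-\mu_1)\Big)-2<0,
\]
which at $\alp=0$ reads $-\mu_1<0$ and persists for $\alp\le\alp_1(\mu_1)$, after which $r\le r_1$ absorbs the $O_i$-remainders of size $Cr^{1-\alp}$. Your worry that the cross terms $\Aa v_1''g_x+\Aa v_1'g_{xx}$ are of the same order $x^{1+\alp}$ as the good term is resolved by this $(1-y^2)$ versus $y^2$ splitting: the cross terms carry the $y^2$ weight and are absorbed into $J_2$, not into $J_1$.

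Two small corrections. First, ellipticity for the comparison is more cleanly taken with respect to $W$ (via Remark~\ref{ellipticityRemark} and \eqref{deriv-W}), not $v_1$; Lemma~\ref{comparison} only requires it for one of the two functions. Second, the constraint $\Abeta<\tfrac14$ plays no role here; the quantitative gap that makes the argument work is $\mu_1=\min(2\Aa\mu,\tfrac12)>0$, inherited from Proposition~\ref{h}.
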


\begin{proof}
In the proof below, all the constants depend only on the data, i.e.,
$\Aa, \Ab, N, \Abeta, R, \hat{\vR}$, $\inf_{\Qr_{\hat{\vR},
R}\cap\{x>{\hat{\vR}}/{2}\}}\psi$, unless otherwise is stated.

By Proposition \ref{h},
\begin{equation} \label{upperbd}
W(x,y)\le\frac{1-\mu_1}{2\Aa}x^2 \qquad\quad\mbox{in $\Qr_{r_0,
15R/16}$},
\end{equation}
where $r_0$ depend only on $\Aa, \Ab, N, R, \hat{\vR}$, and
$\Abeta$.

Fix $y_0$ with $|y_0|\le \frac{7R}{8}$. We now prove that
$$
W(x,y_0)\le \frac{1-\mu_1}{2\Aa r^{\alp}}x^{2+\alp} \;\;\qquad
\text{for}\;\;x\in (0, r).
$$

By a scaling argument similar to the one in the beginning of proof
of Lemma \ref{h}, i.e., considering the function
$\tilde\psi(x,y)=\psi(x,y_0+\frac{R}{32}y)$ in $\Qr_{\hat{\vR}, 2}$,
 we conclude  that,  without loss of generality, we can assume that
$y_0=0$ and $R=2$.
That is, it suffices to prove that
\begin{equation}\label{W-Bd-atZero}
W(x,0)\le \frac{1-\mu_1}{2\Aa r^{\alp}}x^{2+\alp}
\;\;\qquad\text{for}\;\;x\in (0, r)
\end{equation}
for some $r\in (0, r_0), \alpha\in (0,\alpha_1)$, under the
assumptions that (\ref{c})--(\ref{deriv-W}) hold in $\Qr_{\hat{\vR},
2}$ and (\ref{upperbd}) holds in $\Qr_{r_0, 2}$.

For any given $r\in(0, r_0)$, let
\begin{align}
& A_1 r^{\alp}=\frac{1-\mu_1}{2\Aa},\quad B_1=\frac{1-\mu_1}{2\Aa},
\label{A-B-W-def}
\\
& v(x,y)=A_1x^{2+\alp}(1-y^2)+B_1x^2y^2. \label{V-def}
\end{align}
Since  (\ref{Dirichlet-W}) holds on $\partial\Qr_{\hat{\vR},
2}\cap\{x=0\}$ and (\ref{upperbd}) holds in  $\Qr_{r_0, 2}$, then,
for all $x\in(0, r)$ and $|y|\le 1$, we obtain
$$
\begin{cases}
v(0,y)=0 = W(0,y),\\
v(r,y)=\big(A_1 r^{\alp}(1-y^2)+B_1y^2\big) r^2
=\frac{1-\mu_1}{2\Aa} r^2 \ge W(r,y),\\
v(x,\pm 1)=B_1x^2=\frac{1-\mu_1}{2\Aa}x^2\ge W(x,\pm 1).
\end{cases}
$$
Thus,
\begin{equation}\label{bndryComparEl-2}
W\le v\;\qquad \on\;\der\Qr_{r, 1}.
\end{equation}

We now show that $\Ml v< \Ml W\;\;\text{in}\;\Qr_{r, 1}.$ From
(\ref{c}),
$$
\Ml v-\Ml W=\Ml  v-\frac{O_1-xO_4}{\Aa}.
$$
In order to rewrite the right-hand side in a convenient form, we
write the term $v_{yy}$ in the expression of $\Ml  v$ as
$(1-y^2)v_{yy}+ y^2 v_{yy}$ and use similar expressions for the
terms $v_{xy}$ and $v_y$. Then a direct calculation yields
$$
\Ml  v-\frac{O_1-xO_4}{\Aa}=
(2+\alp)A_1x^{1+\alp}(1-y^2)J_1+2B_1xy^2J_2,
$$
where
\begin{align*}
&J_1= (1+\alp)\Big(1+\Aa\big((2+\alp)A_1x^{\alp}(1-y^2)+
2B_1y^2\big)+\frac{O_1}{x}\Big)-(2+O_4)+T_1,\\
&J_2= 1+\Aa\big((2+\alp)A_1x^{\alp}(1-y^2)+2B_1y^2\big)+
\frac{O_1}{x}-(2+O_4)+T_2,\\
&T_1=\frac{1}{(2+\alp)A_1x^{1+\alp}}\Big(2O_2xy(2B_1-(2+\alp)A_1x^{\alp})+
2x^2(B_1-A_1x^{\alp})\big((\Ab+O_3)+O_5y\big)\\
&\qquad\qquad\qquad\qquad \qquad-
\frac{O_1-xO_4}{\Aa}\Big),\\
&T_2=\frac{(2+\alp)A_1x^{1+\alp}}{2B_1x}T_1.
\end{align*}
Thus, in $\Qr_{r, 1}$,
\begin{equation}
\label{f} \Ml v-\Ml W < 0\quad\qquad\text{if}\,\, J_1,\;J_2 < 0.
\end{equation}
By (\ref{Oks-int}) and (\ref{A-B-W-def}), we obtain
$$
|T_1|,\,|T_2|\le C r^{1-\alp} \qquad\,\,\mbox{in $\Qr_{r, 1}$},
$$
so that, in $\Qr_{r, 1}$,
\begin{align} \label{d}
&J_1\le (1+\alp)\big(1+\frac{2+\alp}{2}(1-\mu_1)\big)-2 +C r^{1-\alp},\\
\label{e} &J_2\le 1+\frac{2+\alp}{2}(1-\mu_1)-2+C r^{1-\alp}.
\end{align}
Choose $\alp_1>0$, depending only on $\mu_1$, so that, if
$0<\alp\le\alp_1$,
\begin{align}
\label{choseAlpha0}
(1+\alp)\Big(1+\frac{2+\alp}{2}(1-\mu_1)\Big)-2
\le -\frac{\mu_1}{4}.
\end{align}
Such a choice of $\alp_1>0$ is possible because we have the strict
inequality in (\ref{choseAlpha0}) when $\alp=0$, and the left-hand
side is an increasing function of $\alp>0$ (where we have used
$0<\mu_1\le 1/2$ by reducing $\mu$ if necessary). Now, choosing
$r_1>0$ so that
\begin{align}
\label{chooseEps0}
r_1<\min\big\{\left(\frac{\mu_1}{4C}\right)^\frac{1}{1-\alp},\;
r_0\big\}
\end{align}
is satisfied,
we use (\ref{d})--(\ref{choseAlpha0}) to obtain
$$
J_1, J_2 <0 \quad\qquad\text{in }\; \Qr_{r, 1}.
$$
Then, by
$(\ref{f})$, we obtain
\begin{equation} \label{p}
\Ml v< \Ml W\;\;\qquad\text{in}\;\;\Qr_{r, 1}
\end{equation}whenever
$r\in (0,\;r_1]$ and $\alp\in(0,\;\alp_1]$. By
(\ref{bndryComparEl-2}), (\ref{p}), Remark \ref{ellipticityRemark},
and the standard comparison principle (Lemma \ref{comparison}), we
obtain
\begin{equation}
W \le v \quad\qquad \text{in } \;\Qr_{r, 1}.
\end{equation}
In particular, using (\ref{A-B-W-def})--(\ref{V-def}) with $y=0$, we
arrive at (\ref{W-Bd-atZero}).
\end{proof}

Using Lemma \ref{j}, we now generalize the result (\ref{estLemma-j})
for any $\alp\in (0, 1)$.

\begin{proposition}\label{proposition1}
Let $\Aa, \Ab, N, R, \hat{\vR}, \Abeta$, and $O_i$ be as in Theorem
{\rm \ref{mainRegularityThm}}. Then, for any $\alp\in (0, 1)$, there
exist positive constants $r$ and $A$ which depend only on
$a,b,N,R,\hat{r},\beta$, and $\alp$ so that, if $W\in
C(\overline{Q^+_{\hat{r},R}})\cap C^2(Q^+_{\hat{r},R})$ satisfies
{\rm (\ref{c})}--{\rm (\ref{deriv-W})}, then
\begin{equation}\label{2-all alpha-growth}
W(x,y)\le A x^{2+\alp}\qquad\;\text{in}\;\;Q^+_{r, 3R/{4}}.
\end{equation}
\end{proposition}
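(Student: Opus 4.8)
The plan is to bootstrap from Lemma \ref{j}, which gives the bound $W(x,y)\le Cx^{2+\alp}$ only for small exponents $\alp\in(0,\alp_1]$, to the full range $\alp\in(0,1)$. The natural mechanism is an iteration in which an already-established bound $W\le A' x^{2+\alp'}$ is fed back into the equation $\Ml W=(O_1-xO_4)/\Aa$ to improve the exponent by a fixed increment, repeating finitely many times until the desired $\alp$ is reached. The key observation is that the proof of Lemma \ref{j} never really used that $\alp$ is small \emph{per se} — it used that the starting upper bound was quadratic, i.e.\ $W\le \tfrac{1-\mu_1}{2\Aa}x^2$ in a small rectangle. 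If instead we start from a bound of the form $W\le A' x^{2+\alp'}$ with $\alp'>0$, then running essentially the same comparison argument with the modified barrier $v(x,y)=A_1 x^{2+\alp'+\delta}(1-y^2)+B_1 x^{2+\alp'}y^2$ should yield $W\le A x^{2+\alp'+\delta}$ for a definite gain $\delta>0$ (depending only on the data), on a slightly smaller rectangle. After $\lceil(\alp-\alp_1)/\delta\rceil$ steps, starting from the conclusion of Lemma \ref{j}, one lands at exponent $2+\alp$.

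Concretely, I would proceed as follows. First, fix $\alp\in(0,1)$ and, as in Lemma \ref{j}, reduce by a scaling/translation in $y$ to the case $y_0=0$, $R=2$; it then suffices to estimate $W(x,0)$. Second, state and prove the inductive step: \emph{given constants $A',r'>0$ and an exponent $\tau\in[2+\alp_1,\,2+\alp]$ with $W\le A' x^{\tau}$ in $\Qr_{r',\,2}$, there exist $A'',r''>0$ and $\delta>0$, all depending only on the data (and on $A',r',\tau$ only through explicit formulas), so that $W\le A''x^{\tau+\delta}$ in $\Qr_{r'',\,2}$, where $\delta$ can be taken to depend only on the data and $\tau+\delta\le 2+\alp$ in the last step} — here in the last step one simply takes the smaller increment needed to hit $\tau=2+\alp$ exactly. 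The proof of this step mirrors Lemma \ref{j}: define $v(x,y)=A_1 x^{\tau+\delta}(1-y^2)+B_1x^\tau y^2$ with $A_1 r^{\delta}=B_1=A'$ adjusted so that $v\ge W$ on $\der\Qr_{r,1}$ (using the Dirichlet condition \eqref{Dirichlet-W} at $x=0$ and the hypothesis $W\le A'x^\tau$ on the other sides), compute $\Ml v - (O_1-xO_4)/\Aa$, split the $v_{yy},v_{xy},v_y$ terms by $1=(1-y^2)+y^2$ exactly as in Lemma \ref{j}, and check that the resulting expressions $J_1,J_2$ are negative in $\Qr_{r,1}$ provided $r$ is small and $\delta$ is small relative to the data. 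Then Lemma \ref{comparison} gives $W\le v$, hence $W(x,0)\le A_1 x^{\tau+\delta}$. Third, iterate: start from Lemma \ref{j} (which provides the base case with $\tau=2+\alp$ already if $\alp\le\alp_1$, and otherwise with $\tau=2+\alp_1$), apply the inductive step finitely many times — the number of steps $\lceil (\alp-\alp_1)/\delta\rceil$ is bounded in terms of the data and $\alp$ — shrinking the rectangle each time; since there are only finitely many steps, the final rectangle still contains $\Qr_{r,\,3R/4}$ for some $r>0$ depending only on $a,b,N,R,\hat r,\beta,\alp$. Collecting, $W\le Ax^{2+\alp}$ there, which is \eqref{2-all alpha-growth}.

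The main obstacle I anticipate is verifying that the increment $\delta$ in the inductive step can be chosen \emph{uniformly}, i.e.\ bounded below by a positive constant depending only on the data and not degenerating as $\tau$ increases toward $2+\alp$; this is what guarantees the iteration terminates after finitely many steps. Tracking the inequality analogous to \eqref{choseAlpha0} — schematically, one needs the ``$-2$'' coming from the $-(2+O_4)W_x$ term to beat the positive contribution $\tfrac{\tau+\delta}{2}(1-\mu_1)$-type term — and this requires $\tfrac{\tau+\delta}{2}(1-\mu_1)<2$, i.e.\ $\tau+\delta<\tfrac{4}{1-\mu_1}$. Since $1-\mu_1<1$, we have $\tfrac{4}{1-\mu_1}>4>2+\alp$, so there is room, but the allowable $\delta$ shrinks as $\tau\to\tfrac{4}{1-\mu_1}$; however, because $\tau$ stays bounded by $2+\alp<3$, the gap $\tfrac{4}{1-\mu_1}-\tau$ stays bounded below, so $\delta$ can indeed be taken uniform. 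The secondary technical nuisance is bookkeeping the constants $A_1,B_1,r$ through the iteration so that the final $A$ and $r$ depend only on the claimed quantities; this is routine but must be stated carefully, since each step multiplies the constant by a factor controlled by the data and there are boundedly many steps.
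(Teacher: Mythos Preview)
Your approach is correct, but it introduces an iteration that the paper avoids. The paper uses precisely the barrier shape you propose, $u(x,y)=A_1x^{2+\alp}(1-y^2)+B_1x^{2+\alp_1}y^2$, but jumps directly from the exponent $2+\alp_1$ furnished by Lemma~\ref{j} to the target exponent $2+\alp$ in a \emph{single} comparison step. The observation that makes this possible---and that your analysis of the obstacle misses---is that once $\alp_1>0$, the nonlinear contribution $\Aa u_x$ in the expressions for $\hat J_1,\hat J_2$ carries an extra factor of order $(r/r_1)^{\alp_1}$; taking $r$ small relative to $r_1$ drives this factor to zero, so the effective constraint collapses to $(1+\alp)\cdot 1 - 2 = \alp - 1 < 0$, valid for every $\alp<1$. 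No bootstrap is needed.

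Your identification of the constraint as $\tfrac{\tau+\delta}{2}(1-\mu_1)<2$ is also not quite right: that coefficient arose in Lemma~\ref{j} from the specific choice $A_1 r^{\alp}=\tfrac{1-\mu_1}{2\Aa}$, tied to the \emph{quadratic} starting bound. In your inductive step with a generic constant $A'$ and $\tau>2$, the analogous nonlinear term is bounded by $\Aa A'(\tau+\delta)r^{\tau-2}$, which vanishes as $r\to 0$; the true constraint is $(\tau+\delta-1)-2<0$, i.e.\ $\tau+\delta<3$. Your conclusion that a uniform increment $\delta>0$ exists is therefore correct---indeed easier than you feared---but it also shows you could take $\delta=\alp-\alp_1$ at once, which is exactly the paper's one-step argument.
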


\begin{proof}
As argued before, without loss of generality, we may assume that
$R=2$ and it suffices to show that
\begin{equation}\label{main-ineq-any alpha}
W(x,0)\le Ax^{2+\alp}\;\;\qquad \text{for}\;\;x\in [0,\; r].
\end{equation}
 By
Lemma \ref{j}, it suffices to prove \eqref{main-ineq-any alpha} for
the case $\alp>\alp_1$. Fix any $\alp\in (\alp_1,\;1)$ and set the
following comparison function:
\begin{equation}\label{comparison-ftn-any alpha}
u(x,y)=\frac{1-\mu_1}{2a r_1^{\alp_1}
r^{\alp-\alp_1}}x^{2+\alp}(1-y^2) +\frac{1-\mu_1}{2a
r_1^{\alp_1}}x^{2+\alp_1}y^2.
\end{equation}
By Lemma \ref{j},
\begin{equation}\label{comparison-any alp-bdry}
W\le u\;\qquad\text{on}\;\der Q^+_{r,1} \quad\mbox{for}\, \, r\in
(0, r_1].
\end{equation}
As in the proof of Lemma \ref{j}, we write
$$
\mcl{L}_2u-\frac{O_1-xO_4}{a} u
=(2+\alp)\frac{(1-\mu_1)x^{1+\alp}}{2a r_1^{\alp_1}
r^{\alp-\alp_1}}(1-y^2)\hat{J}_1
+(2+\alp_1)\frac{(1-\mu_1)x^{1+\alp_1}}{2a
r_1^{\alp_1}}y^2\hat{J}_2,
$$
where
\begin{align*}
&D_0=\frac{1-\mu_1}{2}\Big((1-y^2)(2+\alp)\bigl(\frac{x}{r}\bigr)^{\alp}
+y^2(2+\alp_1)\bigl(\frac{x}{r}\bigr)^{\alp_1}\Big),\\
&\hat{J}_1=(1+\alp)\Big(1+
\bigl(\frac{r}{r_1}\bigr)^{\alp_1}D_0\Big)-2+\hat{T}_1,\\
&\hat{J}_2=(1+\alp_1)\Big(1+\bigl(\frac{r}{r_1}\bigr)^{\alp_1}D_0\Big)-2+\hat{T}_2,\\
&\hat{T}_1=\frac{2a r_1^{\alp_1}
r^{\alp-\alp_1}}{(2+\alp(1-\mu_1))x^{1+\alp}}
\Big(\mcl{L}_2u-\big((x+au_x)u_{xx}-2u_x\big)-\frac{O_1-xO_4}{a}\Big),\\
&\hat{T}_2=\frac{2a r_1^{\alp_1}}{(a+\alp_1)(1-\mu_1)x^{1+\alp_1}}
\Big(\mcl{L}_2u-\big((x+au_x)u_{xx}-2u_x\big)-\frac{O_1-xO_4}{a}\Big).
\end{align*}
By (\ref{Oks-int}), we have
$$
|\hat{T}_1|,|\hat{T}_2|\le C r^{1-\alp_1}
$$
for some positive constant $C$ depending only on $a,b,N,\beta,r_1$,
and $\alp_1$. Thus, we have
\begin{equation}
\label{bar J estimate} \max\{\hat{J}_1,\hat{J}_2\}
\le(1+\alp)\Big(1+\bigl(\frac{r}{r_1}\bigr)^{\alp_1}(2+\alp)
\frac{1-\mu_1}{2}\Big)-2+C r^{1-\alp_1} \qquad\mbox{for $(x,y)\in
Q^+_{r,1}$}.
\end{equation}

 Choosing $r>0$ sufficiently
small, depending only on $r_1, \alp$, and $C$, we obtain
$$
\mcl{L}_2u-\mcl{L}_2W=\mcl{L}_2u-\frac{O_1-xO_4}{a}<
0\;\qquad\text{in}\;\;Q^+_{r,1}.
$$
Then Lemma \ref{comparison} implies that
$$
W\le u\;\qquad\text{in}\;\;Q^+_{r,1}.
$$
Thus, (\ref{main-ineq-any alpha}) holds with
$$
A=\frac{1-\mu_1}{2a r_1^{\alp_1} r^{\alp-\alp_1}}.
$$
\end{proof}

\begin{lemma} \label{k}
Let $\Aa, \Ab, N, R, \hat{\vR}, \Abeta$, and $O_i$ be as in Theorem
{\rm \ref{mainRegularityThm}}. Then there exist $r_2>0$ and
$\alp_2\in(0, 1)$ such that, if $W\in C(\overline{\Qr_{\hat{\vR},
R}})\cap C^2(\Qr_{\hat{\vR}, R})$ satisfies {\rm (\ref{c})}--{\rm
(\ref{deriv-W})}, we have
\begin{equation}\label{estLemma-k}
W(x,y)\ge -\frac{1-\Abeta}{2\Aa{r}^{\alp}}x^{2+\alp}
\;\;\qquad\text{in}\;\;\Qr_{r, {7R}/{8}}
\end{equation}
whenever $\alp\in (0,\;\alp_2]$ and $r\in(0,\;r_2]$.
\end{lemma}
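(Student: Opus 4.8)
The plan is to prove Lemma~\ref{k} by the comparison principle (Lemma~\ref{comparison}) against a \emph{negative} barrier, in close parallel with the proof of Lemma~\ref{j}. The one new --- and helpful --- feature is that the leading correction term now enters with the favourable sign, so that, in contrast to Lemma~\ref{j}, no delicate bound on the exponent is needed and essentially any $\alp_2\in(0,1)$ (say $\alp_2=1/2$) works.

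Exactly as in the proofs of Proposition~\ref{h} and Lemma~\ref{j}, the scaling $\tilde\psi(x,y):=\psi(x,y_0+\tfrac{R}{32}y)$ lets us assume $R=2$, $y_0=0$, so that it suffices to prove
\[
W(x,0)\ge -\frac{1-\Abeta}{2\Aa r^{\alp}}x^{2+\alp}\qquad\For\;\; x\in(0,r)
\]
for some small $r>0$ and all $\alp\in(0,\alp_2]$. Integrating the left inequality of (\ref{deriv-W}) in $x$ and using (\ref{Dirichlet-W}) gives at once the elementary bound $W(x,y)\ge -\tfrac{1-\Abeta}{2\Aa}x^2$ in $\Qr_{\hat r,2}$. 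Now, for $r\in(0,\hat r/2)$ to be chosen, put $A_2 r^{\alp}=B_2=\tfrac{1-\Abeta}{2\Aa}$ and define the barrier
\[
v(x,y):=-A_2 x^{2+\alp}(1-y^2)-B_2 x^2 y^2 ,
\]
so that $v(x,0)=-A_2x^{2+\alp}=-\tfrac{1-\Abeta}{2\Aa r^{\alp}}x^{2+\alp}$ is precisely the target bound. Using $W(0,y)=0$, the elementary lower bound above along $\{y=\pm1\}$, and $v(r,y)\equiv -\tfrac{1-\Abeta}{2\Aa}r^2\le W(r,y)$ along $\{x=r\}$, one checks that $v\le W$ on $\der\Qr_{r,1}$.

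The heart of the matter is to verify $\Ml v\ge \Ml W=\tfrac{O_1-xO_4}{\Aa}$ in $\Qr_{r,1}$. For this I would reuse verbatim the algebraic identity obtained in the proof of Lemma~\ref{j}, now with $(A_1,B_1)$ replaced by $(-A_2,-B_2)$ and $\mu_1$ by $\Abeta$: splitting $v_{yy}=(1-y^2)v_{yy}+y^2v_{yy}$ and likewise for $v_{xy},v_y$ yields
\[
\Ml v-\frac{O_1-xO_4}{\Aa}=-(2+\alp)A_2 x^{1+\alp}(1-y^2)\hat J_1-2B_2 x y^2\hat J_2 ,
\]
where $\hat J_1,\hat J_2$ are given by the same formulas as $J_1,J_2$ in Lemma~\ref{j} except that the nonnegative term $\Aa\big((2+\alp)A_2 x^{\alp}(1-y^2)+2B_2 y^2\big)$ now appears with a \emph{minus} sign. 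Since (\ref{Oks-int}) gives $|\hat T_1|,|\hat T_2|\le C r^{1-\alp}$ exactly as in Lemma~\ref{j} (with $C$ depending only on $\Aa,\Ab,N,\Abeta$), we obtain in $\Qr_{r,1}$
\[
\hat J_1\le \alp-1+Cr^{1-\alp},\qquad \hat J_2\le -1+Cr^{1-\alp} .
\]
Fixing $\alp_2:=1/2$ and then $r_2>0$ small enough that $Cr_2^{1/2}<\tfrac12$ and that Remark~\ref{ellipticityRemark} applies, we get $\hat J_1,\hat J_2<0$, hence $\Ml v-\Ml W\ge 0$, in $\Qr_{r,1}$ for all $\alp\in(0,\alp_2]$ and $r\in(0,r_2]$.

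Finally, $\Ml$ is elliptic with respect to $W$ on $\Qr_{r,1}$: by (\ref{deriv-W}) and (\ref{Oks-int}), $x+\Aa W_x+O_1\ge(\Abeta-Nr)x>0$ and $\Ab+O_3>0$, and the determinant is positive by the same computation as in Remark~\ref{ellipticityRemark}. Hence Lemma~\ref{comparison}, applied to $W$ and $v$ on $\Qr_{r,1}$ (with $\Ml W\le \Ml v$ in $\Qr_{r,1}$ and $W\ge v$ on $\der\Qr_{r,1}$), gives $W\ge v$ in $\Qr_{r,1}$, and evaluating at $y=0$ gives the claim (after undoing the scaling). I expect no essential obstacle beyond faithfully transcribing the computation of Lemma~\ref{j} with the sign change and re-checking that the $\hat T_i$ estimates are unaffected; the pleasant point is that the condition $\mu_1\le1/2$, essential in Lemma~\ref{j}, plays no role here, since the quantity controlling $\hat J_1$ is now at worst $(1+\alp)-2=\alp-1<0$.
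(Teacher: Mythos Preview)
Your proposal is correct and follows essentially the same route as the paper: the same elementary lower bound $W\ge-\tfrac{1-\Abeta}{2\Aa}x^2$, the same comparison function $v(x,y)=-Lx^{2+\alp}(1-y^2)-Kx^2y^2$ with $Lr^\alp=K=\tfrac{1-\Abeta}{2\Aa}$, and the same appeal to Lemma~\ref{comparison}. The one point where you go a bit further than the paper is the observation about the sign: when you redo the algebra of Lemma~\ref{j} with $(A_1,B_1)\to(-L,-K)$, the term $\Aa\big((2+\alp)Lx^\alp(1-y^2)+2Ky^2\big)$ indeed enters $\hat J_1,\hat J_2$ with a \emph{minus} sign, so dropping it gives $\hat J_1\le(1+\alp)-2+Cr^{1-\alp}=\alp-1+Cr^{1-\alp}$ and $\hat J_2\le-1+Cr^{1-\alp}$. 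This is a sharper (and simpler) constraint than what the paper's terse description---``inequalities (\ref{choseAlpha0})--(\ref{chooseEps0}) with $\mu_1,r_1$ replaced by $\Abeta,r_2$''---would literally yield; in particular it explains why, unlike in Lemma~\ref{j}, no smallness of $\alp$ tied to $\Abeta$ is needed, and any fixed $\alp_2\in(0,1)$ works once $r_2$ is chosen small. This is a clarification rather than a different method.
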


\begin{proof} By (\ref{Dirichlet-W})--(\ref{deriv-W}), it can  be easily
verified that $W(x,y)\ge-\frac{1-\Abeta}{2\Aa}x^2$ in
$\Qr_{\hat{\vR}, R}$. Now, similar to the proof of  Lemma \ref{j},
it suffices to prove that, with the assumption $R=2$,
$$
W(x,0)\ge -\frac{1-\Abeta}{2\Aa r^{\alp}}x^{2+\alp}
\;\qquad\text{for}\;\;x\in (0, r)
$$
for some $r>0$ and $\alp\in (0,\alpha_2)$.

For this, we use the comparison function:
$$
v(x,y):=-Lx^{2+\alp}(1-y^2)-Kx^2 y^2,\;\quad\text{with}\;\;
L{r}^{\alp}=K=\frac{1-\Abeta}{2\Aa}.
$$
%
Then we follow the same procedure as the proof of Lemma \ref{j},
except that $\Ml v> \Ml W$, to find that the conditions for the
choice of $\alp, r>0$ are inequalities
(\ref{choseAlpha0})--(\ref{chooseEps0}) with $\mu_1, r_1$ replaced
by $\Abeta, r_2$ and with an appropriate constant $C$.
\end{proof}

Using Lemma \ref{k}, we now generalize the result \eqref{estLemma-k} for
any $\alpha\in (0,1)$.

\begin{proposition}\label{proposition2}
Let $\Aa, \Ab, N, R, \hat{\vR}, \Abeta$, and $O_i$ be as in Theorem
{\rm \ref{mainRegularityThm}}. Then, for any $\alp\in (0,\;1)$,
there exist positive constants $r$ and $B$ depending on $a,b,N,R,
\hat{r},\beta,$ and $\alp$ so that, if $W\in
C(\overline{Q^+_{\hat{r},R}})\cap C^2(Q^+_{\hat{r},R})$ satisfies
{\rm (\ref{c})}--{\rm (\ref{deriv-W})}, then
\begin{equation}\label{3-all alpha-growth}
W(x,y)\ge -B x^{2+\alp}\;\qquad \text{in}\;\;Q^+_{r,3R/4}.
\end{equation}
\end{proposition}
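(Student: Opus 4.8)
The plan is to run the bootstrap of Proposition~\ref{proposition1} a second time, now starting from the lower bound of Lemma~\ref{k} rather than the upper bound of Lemma~\ref{j}, so as to promote \eqref{estLemma-k} from the fixed exponent $\alp_2$ to an arbitrary $\alp\in(0,1)$. As in the earlier arguments, one first reduces to $R=2$ by the rescaling $\tilde\psi(x,y)=\psi(x,y_0+\frac{R}{32}y)$ (exactly as in the proofs of Lemmas~\ref{j} and~\ref{k}), so that it suffices to produce, for each $\alp\in(0,1)$, constants $r,B>0$ depending only on $\Aa,\Ab,N,R,\hat r,\Abeta,\alp$ with $W(x,0)\ge -B\,x^{2+\alp}$ for $x\in[0,r]$. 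For $\alp\le\alp_2$ this is Lemma~\ref{k}, so I fix $\alp\in(\alp_2,1)$. No lower-bound analogue of Proposition~\ref{h} is needed: the base estimate $W\ge-\frac{1-\Abeta}{2\Aa}x^2$ that feeds Lemma~\ref{k} is immediate from \eqref{deriv-W} together with \eqref{quadraticBound}.

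Next I would compare $W$ with the interpolating barrier
\[
u(x,y)=-\frac{1-\Abeta}{2\Aa\,r_2^{\alp_2}\,r^{\alp-\alp_2}}\,x^{2+\alp}(1-y^2)
-\frac{1-\Abeta}{2\Aa\,r_2^{\alp_2}}\,x^{2+\alp_2}\,y^2,
\]
the mirror image of the barrier \eqref{comparison-ftn-any alpha} used for the upper bound, with $r_2,\alp_2$ the constants of Lemma~\ref{k}. For $r\in(0,r_2)$ one checks $W\ge u$ on $\partial\Qr_{r,1}$: on $\{x=0\}$ both vanish by \eqref{Dirichlet-W}; on $\{x=r\}$ the barrier collapses to $-\frac{1-\Abeta}{2\Aa r_2^{\alp_2}}r^{2+\alp_2}$, which is $\le W(r,y)$ by Lemma~\ref{k} at exponent $\alp_2$ and radius $r_2$; and on $\{y=\pm1\}$ the barrier equals $-\frac{1-\Abeta}{2\Aa r_2^{\alp_2}}x^{2+\alp_2}\le W(x,\pm1)$ for the same reason.

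Finally I would establish the strict interior inequality $\Ml u>\Ml W$ in $\Qr_{r,1}$, where $\Ml W=\frac{O_1-xO_4}{\Aa}$ by \eqref{c}. Splitting $u_{yy},u_{xy},u_y$ into their $(1-y^2)$- and $y^2$-parts exactly as in the proof of Proposition~\ref{proposition1} turns $\Ml u-\Ml W$ into an expression of the form $-(2+\alp)P_1 x^{1+\alp}(1-y^2)\hat J_1-(2+\alp_2)P_2 x^{1+\alp_2}y^2\hat J_2$ with $P_1,P_2>0$ and $\hat J_i=(1+\alp)\bigl(1+(\tfrac{r}{r_2})^{\alp_2}D_0\bigr)-2+\hat T_i$, where $D_0$ is bounded on $\{x\le r\}$ and $\hat T_i=O(r^{1-\alp_2})$ by \eqref{Oks-int}. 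As $r\downarrow 0$ with $r_2,\alp_2$ fixed, both $(\tfrac{r}{r_2})^{\alp_2}D_0$ and $\hat T_i$ tend to $0$, so $\hat J_i\to\alp-1<0$; hence for $r$ small enough (quantitatively in terms of the data and $\alp$) one has $\hat J_1,\hat J_2<0$, and therefore $\Ml u>\Ml W$ in $\Qr_{r,1}$ — here $\Ml$ is elliptic with respect to $W$ by Remark~\ref{ellipticityRemark} (and to $u$ as well, since $u_x=O(x^{1+\alp})$). The comparison principle, Lemma~\ref{comparison}, then gives $W\ge u$ in $\Qr_{r,1}$, and setting $y=0$ yields \eqref{3-all alpha-growth} with $B=\frac{1-\Abeta}{2\Aa r_2^{\alp_2}r^{\alp-\alp_2}}$. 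The step needing the most care is the sign bookkeeping: because $u<0$, every prefactor that was positive in Proposition~\ref{proposition1} reverses sign, so the differential inequality that must be forced flips to $\Ml u>\Ml W$; pleasantly, the algebraic conditions on $\alp$ and $r$ keep exactly the same shape, and the resulting $r$ still depends only on the admissible data.
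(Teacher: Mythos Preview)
Your proposal is correct and follows essentially the same approach as the paper: the paper's proof likewise sets the barrier $u_-(x,y)=-\frac{1-\Abeta}{2\Aa r_2^{\alp_2}r^{\alp-\alp_2}}x^{2+\alp}(1-y^2)-\frac{1-\Abeta}{2\Aa r_2^{\alp_2}}x^{2+\alp_2}y^2$ and invokes the argument of Proposition~\ref{proposition1} verbatim to force $\Ml u_- - \Ml W>0$ for $r$ small. One small imprecision: in the decomposition, $\hat J_2$ should carry the prefactor $(1+\alp_2)$ rather than $(1+\alp)$ (cf.\ the analogous $\hat J_2$ in Proposition~\ref{proposition1}), but since $(1+\alp_2)-2=\alp_2-1<0$ as well, your limit argument and conclusion are unaffected.
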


\begin{proof}
For fixed $\alp\in (\alp_2, 1)$, we set the comparison function:
$$
u_-(x,y)=-\frac{1-\beta}{2a r_2^{\alp_2}
r^{\alp-\alp_2}}x^{2+\alp}(1-y^2) -\frac{1-\beta}{2a
r_2^{\alp_2}}x^{2+\alp_2}y^2.
$$
Then, using the argument as in the proof of Proposition
\ref{proposition1}, we can choose $r>0$ appropriately small so that
$$
\mcl{L}_2u_--\mcl{L}_2W=\mcl{L}_2 u_--\frac{O_1-xO_4}{a}> 0
$$
holds for all $(x,y)\in Q^+_{r,1}$.
\end{proof}

With Proposition \ref{proposition1}--\ref{proposition2}, we now
prove Theorem 3.1.

\subsection{Proof of Theorem \ref{mainRegularityThm}}
We divide the proof into four steps.

Step 1. Let $\psi$ be a solution of (\ref{mainEq-allTerms}) in
$\Qr_{\hat{\vR}, R}$ for $\hat{r}$ as in Remark
\ref{ellipticityRemark}, and let the assumptions of Theorem
\ref{mainRegularityThm} hold.
Then $\psi$ satisfies (\ref{interRegEq}). Thus it suffices to show
that, for any given $\alp\in (0,1)$, there exists $r>0$ so that
$\psi\in C^{2,\alpha}(\overline{\Qr_{r, {R}/{2}}})$ and
$\psi_{xx}(0, y)=\frac{1}{\Aa}$, $\psi_{xy}(0,y)=\psi_{yy}(0,y)=0$
for all $|y|<\frac{R}{2}$.

Let $W(x,y)$ be defined by (\ref{DefWfunct}). Then, in order to
prove Theorem \ref{mainRegularityThm}, it suffices to show that, for
any given $\alp\in (0,\;1)$, there exists $r>0$ so that

(i) $W\in C^{2,\alpha}(\overline{\Qr_{r, {R}/{2}}})$;

(ii) $D^2W(0, y)=0$ for all $|y|<\frac{R}{2}$.

\medskip
Step 2. By definition, $W$ satisfies (\ref{c})--(\ref{deriv-W}). For
any given $\alp\in (0,1)$, there exists $r>0$ so that both
(\ref{2-all alpha-growth}) and (\ref{3-all alpha-growth}) hold in
$Q^+_{r,3R/4}$ by Propositions
\ref{proposition1}--\ref{proposition2}. Fix such $r>0$.

Furthermore, since $W$ satisfies estimate (\ref{deriv-W}), we can
introduce a cutoff function into the nonlinear term of equation
(\ref{c}), i.e., modify the nonlinear term away from the values
determined by (\ref{deriv-W}) to make the term bounded in $W_x/x$.
Namely, fix $\zeta\in C^\infty(\R)$ satisfying
\begin{equation}\label{defZeta}
\begin{split}
&-{1-\Abeta/2\over \Aa}\le\zeta\le M+{2\over \Aa}\;\;\,\, \on\;
\R;\qquad\,\,
\zeta(s)=s\;\;\,\,\on\; \big(-{1-\Abeta\over \Aa},\;M+{1\over \Aa}\big);\\
& \zeta\equiv 0\;\qquad \on\;\;\R\setminus\big(-{{2-\Abeta}\over
\Aa},\;M+{4\over \Aa}\big).
\end{split}
\end{equation}
Then, from (\ref{c}) and (\ref{deriv-W}), it follows that $W$
satisfies
\begin{equation}
\label{c-cutoff}
\begin{split}
&x\big(1\!+\!\Aa
\zeta(\frac{W_x}{x})\!+\!\frac{O_1}{x}\big)W_{xx}\!\!+\!O_2W_{xy}\!
+\!(\Ab \!+\!O_3)W_{yy}\!\\
&\qquad\qquad\qquad\qquad\qquad\qquad
-\!(2\!+\!O_4)W_x\!+\!O_5W_y\!\!
=\frac{O_1-xO_4}{\Aa}\qquad\inn\;\Qr_{\hat{\vR}, R}.
\end{split}
\end{equation}

\medskip
Step 3. For $z:=(x,y)\in \Qr_{r/2, R/2}$, define
\begin{equation}\label{parabRectangles-1}
R_{z}:=\big\{(s,t)\;\;:\;\; |s-x|<\frac{x}{8},\,
|t-y|<\frac{\sqrt{x}}{8}\big\}.
\end{equation}
Then
\begin{equation}\label{localizeRectangle}
R_{z}\subset \Qr_{r, 3R/4} \qquad\mbox{for any } z=(x,y)\in
\Qr_{r/2, R/2}.
\end{equation}

Fix $z_0=(x_0,y_0)\in \Qr_{r/2, R/2}$. Rescale $W$ in $R_{z_0}$ by
defining
\begin{equation}\label{parabRescaling-1}
W^{(z_0)}(S, T)=\frac{1}{x_0^{2+\alpha}}W(x_0+\frac{x_0}{8}S,
y_0+\frac{\sqrt{x_0}}{8}T) \qquad\mbox{for }\,\,(S, T)\in Q_1,
\end{equation}
where $Q_h=(-h,h)^2$ for $h>0$. Then, by (\ref{2-all alpha-growth}),
(\ref{3-all alpha-growth}), and (\ref{localizeRectangle}), we have
\begin{equation}\label{estInterLinfty-Rescaled-pf}
\|W^{(z_0)}\|_{C^0(\overline{Q_{1}})}\le \frac{1}{\Aa r^\alpha}.
\end{equation}
Moreover, since $W$ satisfies equation (\ref{c}), $W^{(z_0)}$
satisfies the following equation for $(S,T)\in Q_1$:
\begin{equation} \label{l}
\begin{split}
&(1+\frac S8)\Big(1+\Aa\zeta(\frac{8x_0^{\alp}W_S^{(z_0)}}{1+\frac
S8})+ \til O_1^{(z_0)}\Big)W_{SS}^{(z_0)}+\til
O_2^{(z_0)}W_{ST}^{(z_0)}
+(\Ab+\til O_3^{(z_0)})W_{TT}^{(z_0)}\\
& \qquad-\frac18(2+\til O_4^{(z_0)})W_S^{(z_0)}+\frac18\til
O_5^{(z_0)}W_T^{(z_0)}\!=\!\frac {(1+\frac S8)}{64\Aa x_0^{\alp}}
\big(\til O_1^{(z_0)}\!-\!\til O_4^{(z_0)}\big),
\end{split}
\end{equation}
where
\begin{equation*}
\begin{split}
&\til O_1^{(z_0)}(S,T)=\frac 1{x_0(1+\frac S8)}O_1(x,y), \qquad
\til O_2^{(z_0)}(S,T)=\frac 1{\sqrt{x_0}}O_2(x,y),\\
&\til O_3^{(z_0)}(S,T)=O_3(x,y), \qquad \til
O_4^{(z_0)}(S,T)=O_4(x,y), \qquad \til
O_5^{(z_0)}(S,T)=\sqrt{x_0}O_5(x,y),
\end{split}
\end{equation*}
with $x=x_0(1+\frac S8)$ and $y=y_0+\frac{\sqrt{x_0}}{8}T$. Then,
from (\ref{Oks-int})--(\ref{Oks-Der-int}), we find that, for all
$(S,T)\in \overline{Q_1}$ and $z_0\in  \Qr_{r/2, R/2}$ with $r\le
1$,
\begin{equation}
\label{rectangleCoefsEstimates}
\begin{split}
&|\til{O}_k^{(z_0)}(S,T)|\le 2N \sqrt{r}\qquad\text{for}\,\,\,  k=1,\dots, 5,\\
&|D\til{O}_k^{(z_0)}(S,T)|\le 2N\sqrt{r}\qquad\text{for}\,\,\,
k\ne 2,\\
&|D\til{O}_2^{(z_0)}(S,T)|\le 2N.
\end{split}
\end{equation}
%
%
Also, denoting the right-hand side of (\ref{l}) by $F^{(z_0)}(S,T)$,
we obtain from (\ref{rectangleCoefsEstimates}) that, for all $(S,
T)\in \overline{Q_1}$ and $z_0\in  \Qr_{r/2, R/2}$,
\begin{equation}
\label{rectangleRHSEstimates}
|F^{(z_0)}(S,T)|\le C r^{1-\alp}, \qquad |DF^{(z_0)}(S,T)|\le C
r^{\frac 12-\alp},
\end{equation}
where $C$ depends only on $N$ and $\Aa$.

Now, writing equation (\ref{l}) as
\begin{equation} \label{l-short}
\sum_{i,j=1}^2 A_{ij}(DW^{(z_0)}, S, T)\,D_{ij}^2W^{(z_0)}
+\sum_{i=1}^2 B_i(S, T)\,D_iW^{(z_0)} =F^{(z_0)} \qquad \inn\;\;
Q_1,
\end{equation}
we get from (\ref{defZeta}) and
(\ref{l})--(\ref{rectangleRHSEstimates}) that,
if $r>0$ is sufficiently small, depending only on the data, then
(\ref{l-short}) is uniformly elliptic with elliptic constants
depending only on $\Ab$ but independent of $z_0$, and that the
coefficients $A_{ij}(p, S, T)$, $B_i(S, T)$, and $F^{(z_0)}(S, T)$,
for $p\in\R^2$, $(S, T)\in Q_1$, satisfy
$$
\|A_{ij}\|_{C^1(\R^2\times \overline{Q_1}) }\le C, \quad \|(B_i,
\frac{F^{(z_0)}}{r^{1/2-\alp}})\|_{C^1( \overline{Q_1}) }\le C,
$$
where $C$ depends only on the data and is independent of  $z_0$.
Then, by \cite[Theorem A1]{ChenFeldman} and
(\ref{estInterLinfty-Rescaled-pf}),
\begin{equation} \label{elliptic_est}
\|W^{(z_0)}\|_{C^{2,\alp}(\overline{Q_{1/2}})}\le
C\big(\|W^{(z_0)}\|_{C^0(\overline{Q_1})}+
\|F^{(z_0)}\|_{C^\alp(\overline{Q_1})}\big) \le
C(\frac{1}{r^{\alp}a}+ r^{1/2-\alp})=:\hat{C},
\end{equation}
where $C$ depends only on the data and $\alp$ in this case. {}From
(\ref{elliptic_est}),
\begin{equation} \label{q}
|D_x^iD_y^j W(x_0,y_0)|\le C x_0^{2+\alp-i-j/2}\qquad \text{for
all}\quad (x_0, y_0)\in  \Qr_{r/2, R/2}, \quad 0\le i+j\le 2.
\end{equation}

\medskip
Step 4. It remains to prove the $C^{\alp}$-continuity of $D^2W$ in
$\overline{\Qr_{r/2, R/2}}$.

For two distinct points $z_1=(x_1,y_1),z_2=(x_2,y_2)\in \Qr_{r/2,
R/2}$, consider
$$
A:=\frac{|W_{xx}(z_1)-W_{xx}(z_2)|}{|z_1-z_2|^{\alp}}.
$$
Without loss of generality, assume that $x_1\le x_2$. There are two
cases:

{\em Case 1.} $z_1\in R_{z_2}$. Then
$$
x_1=x_2+\frac{x_2}{8}S, \quad y_1=y_2+\frac{\sqrt{x_2}}{8}T \qquad
\text{for some } (S,T)\in Q_1.
$$
By
(\ref{elliptic_est}),
$$
\frac{|W_{SS}^{(z_2)}(S,T)-W_{SS}^{(z_2)}(0,0)|}{(S^2+T^2)^{\alp/{2}}}\le
\hat{C},
$$
which is
$$
\frac{|W_{xx}(x_1,y_1)-W_{xx}(x_2,y_2)|}{((x_1-x_2)^2+x_2(y_1-y_2)^2)^{\alp/{2}}}
\le \hat{C}.
$$
Since $x_2\in (0, r)$ and $r\le 1$, the last estimate implies
$$
\frac{|W_{xx}(x_1,y_1)-W_{xx}(x_2,y_2)|}{((x_1-x_2)^2+(y_1-y_2)^2)^{\alp/{2}}}\le
\hat{C}.
$$

{\em Case 2.} $z_1\notin R_{z_2}$. Then, either
$|x_1-x_2|>\frac{x_2}{8}$ or $|y_1-y_2|>\frac{\sqrt{x_2}}{8}$. Since
$0\le x_2\le r\le 1$, we find
$$
|z_1-z_2|^\alp\ge \Big(\frac{x_2}{8}\Big)^\alp.
$$
Thus, using (\ref{q}) and $x_1\le x_2$, we obtain
$$
\frac{|W_{xx}(z_1)-W_{xx}(z_2)|}{|z_1-z_2|^{\alp}}\le
\frac{|W_{xx}(z_1)|+|W_{xx}(z_2)|}{|z_1-z_2|^{\alp}}\le
\hat{C}\frac{x_1^\alp+x_2^\alp}{x_2^\alp}\le 2 \hat{C}.
$$

Therefore, $A\le 2\hat{C}$ in both cases, where $\hat{C}$ depends on
$\alp, r$, and the data. Since $z_1\ne z_2$ are arbitrary points of
$\Qr_{r/2, R/2}$, we obtain
\begin{equation} \label{est_1}
[W_{xx}]_{C^\alp(\overline{\Qr_{r/2, R/2}})}\le 2 \hat{C}.
\end{equation}
The estimates for $W_{xy}$ and $W_{yy}$ can be obtained similarly.
In fact, for these derivatives, we obtain the stronger estimates:
For any $\delta\in (0, r/2]$,
$$
[W_{xy}]_{C^\alp(\overline{\Qr_{\delta, R/2}})}\le
\hat{C}\sqrt{\delta},\qquad [W_{yy}]_{C^\alp(\overline{\Qr_{\delta,
R/2}})}\le \hat{C}\delta,
$$
where $\hat{C}$ depends on $\alp, r$, and the data, but is
independent of $\delta>0$ and $z_0$.

Thus, $W\in C^{2,\alpha}(\overline{\Qr_{r, R/2}})$ with
$\|W\|_{C^{2,\alpha}(\overline{\Qr_{r, R/2}})}$ depending only on
the data because $r>0$ depends on the data. Moreover,  (\ref{q})
implies $D^2 W(0, y)= 0$ for any $|y|\le R/2$. This concludes the
proof of Theorem \ref{mainRegularityThm}. $\Box$

\section{Optimal Regularity of Solutions to Regular Shock Reflection
across the Sonic Circle} \label{OptReg-RegRefl-section}

\smallskip
As we indicated in Section 2,
the global solution $\varphi$ constructed in \cite{ChenFeldman} is
at least $C^{1,1}$ near the sonic circle $\PtUpL \PtUpR$. On the
other hand, the behavior of solutions to regular shock reflection
has not been understood completely; so it is essential to understand
first the regularity of regular reflection solutions.
%
In this section, we prove that $C^{1,1}$ is in fact the optimal
regularity of any solution $\varphi$ across the sonic circle $\PtUpL
\PtUpR$ in the class of standard regular reflection solutions. Our
main results include the following three ingredients:

(i) There is no a regular reflection solution that is $C^2$ across
the sonic circle;

(ii) For the solutions constructed in \cite{ChenFeldman} or, more
generally, for any regular reflection solution satisfying properties
(\ref{phi-GE-phi2}) and (\ref{C11NormEstimate})--(\ref{FBnearSonic})
at the end of Section 2,
%
%
$\varphi$ is $C^{2,\alpha}$ in the subsonic region $\Omega$ up to
the sonic circle $\PtUpL\PtUpR$,
excluding the endpoint $\PtUpL$, but $D^2\varphi$ has a jump across
$\PtUpL\PtUpR$;

(iii)
In addition, $D^2\varphi$ does not have a limit at $\PtUpL$ from
$\Omega$.

\smallskip
In order to state these results, we first define
the class of
regular reflection solutions. As proved in \cite{ChenFeldman}, when
the wedge angle $\theta_w$ is large, such a regular reflection
configuration exists; and in \cite{ChenFeldman3}, we extend this to
other wedge-angles for which regular reflection configuration
exists.
%
%

\smallskip Now we define the class of regular
reflection solutions.

\begin{definition}\label{RegReflSolDef}
Let $\gamma > 1$, $\rho_1>\rho_0>0$, and $u_1>0$ be constants, and
let $\xi_0$ be defined by {\rm (\ref{shocklocation})}.
Let the incident shock $S=\{\xi=\xi_0\}$ hit the wedge at the point
$P_0=(\xi_0, \xi_0 \tan\theta_w)$, and let state $(0)$ and state
$(1)$ ahead of and behind $S$ be given by
\eqref{flatOrthSelfSimShock1} and \eqref{flatOrthSelfSimShock2},
respectively.
The function $\vphi\in C^{0,1}(\Lambda)\cap C^2(\Omega)$ is a
regular reflection solution if $\vphi$ is a solution to Problem $2$
satisfying {\rm (\ref{phi-states-0-1-2})} and such that

{\rm (a)} there exists state $(2)$ of  the form {\rm
(\ref{state2a})} with $u_2>0$, satisfying the entropy condition
$\rho_2>\rho_1$ and the Rankine-Hugoniot condition $(\rho_1
D\varphi_1-\rho_2 D\varphi_2)\cdot\nu=0$
along the line $S_1:=\{\varphi_1=\varphi_2\}$ which contains the points
$P_0$ and $\PtUpL$, such that $\PtUpL\in \Lambda$ is on the sonic
circle of state $(2)$, and state $(2)$ is supersonic along
$P_0\PtUpL$;

{\rm (b)}
equation \eqref{1.1.5} is elliptic in $\Omega$;

{\rm (c)} $\varphi\ge \varphi_2$ on the part $P_1P_2=\Gamma_{shock}$
of the reflected shock.
%
\end{definition}

\begin{remark}
If state $(2)$ exists and supersonic, then the line
$S_1=\{\varphi_1=\varphi_2\}$ necessarily intersects the sonic
circle of state $(2)$; see the argument in \cite{ChenFeldman}
starting from {\rm (3.5)} there. Thus the only assumption regarding
the point $\PtUpL$ is that $S_1$ intersects the sonic circle within
$\Lambda$.
\end{remark}

\begin{remark}
The global solution constructed in \cite{ChenFeldman} is a regular
reflection solution, which is a part of the assertions at the end of
Section {\rm 2}.
\end{remark}

\begin{remark}
There may exist a global regular reflection configuration when state
$(2)$ is subsonic which is a very narrow regime \cite{CF, Neumann}.
Such a case does not involve the difficulty of elliptic degeneracy
which we are facing in our case.
\end{remark}

\begin{remark}
Since $\varphi=\varphi_1$ on $\Shock$ by \eqref{phi-states-0-1-2},
Condition {\rm (c)} in Definition {\rm \ref{RegReflSolDef}} is
equivalent to
$$
\Gamma_{shock}\subset \{\varphi_2\le \varphi_1\},
$$
that is, $\Gamma_{shock}$ is below $S_1$.
\end{remark}

Furthermore, we have
\begin{lemma}
For any regular reflection solution $\varphi$ in the sense of
Definition {\rm \ref{RegReflSolDef}},
\begin{equation}\label{varphi-varphi-2}
\varphi > \varphi_2 \qquad\mbox{in}\,\, \Omega.
\end{equation}
\end{lemma}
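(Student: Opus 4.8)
The plan is to show that the difference $w:=\varphi-\varphi_2$ satisfies in $\Omega$ a \emph{homogeneous} linear second‑order equation which is elliptic and, crucially, has \emph{no lower‑order terms at all}, and then to conclude by the maximum principle together with Hopf's lemma on the portion of $\partial\Omega$ that carries an oblique boundary condition. Throughout I would use that $\varphi\in C^2(\Omega)\cap C^{1}(\overline\Omega)$ by \eqref{phi-states-0-1-2}.

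First I would put the potential flow equation into nondivergence form: dividing \eqref{1.1.5} by $\rho$ and multiplying by $c^2$ (both positive in $\Omega$ by \eqref{1.1.8}) turns it into
\begin{equation*}
N[\varphi]:=(c^2-\varphi_\xi^2)\varphi_{\xi\xi}-2\varphi_\xi\varphi_\eta\,\varphi_{\xi\eta}+(c^2-\varphi_\eta^2)\varphi_{\eta\eta}-|D\varphi|^2+2c^2=0,
\end{equation*}
with $c^2=c^2(|D\varphi|^2,\varphi,\rho_0^{\gamma-1})$ as in \eqref{c-through-density-function}. Since state $(2)$ is a constant state obeying the Bernoulli relation (so $\Delta\varphi_2=-2$ and $\rho(|D\varphi_2|^2,\varphi_2)=\rho_2$), the explicit quadratic $\varphi_2$ of \eqref{state2a} also solves $N[\,\cdot\,]=0$. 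Now $N$ is \emph{affine} in $D^2\varphi$, with coefficient matrix $c^2\delta_{ij}-\varphi_i\varphi_j$, and — the key point — its remaining term $-|D\varphi|^2+2c^2$ is exactly the \emph{trace} of that matrix. Since $D^2\varphi_2=-I$, subtracting $N[\varphi_2]=0$ from $N[\varphi]=0$ makes the increment of the coefficient matrix (contracted against $-I$) cancel the increment of the trace term, leaving
\begin{equation*}
(c^2-\varphi_\xi^2)\,w_{\xi\xi}-2\varphi_\xi\varphi_\eta\,w_{\xi\eta}+(c^2-\varphi_\eta^2)\,w_{\eta\eta}=0\qquad\text{in }\Omega ,
\end{equation*}
with $c^2=c^2(|D\varphi|^2,\varphi,\rho_0^{\gamma-1})$ frozen at the values of $\varphi$. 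This coefficient matrix is positive definite precisely where \eqref{1.1.5} is elliptic at $\varphi$, i.e.\ at every point of $\Omega$ by Condition (b) of Definition \ref{RegReflSolDef}; it is uniformly elliptic on compact subsets of $\Omega$ and degenerates only on $\Sonic\subset\partial\Omega$.

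Next I would read off the boundary values of $w$. On $\Sonic$ we have $w\equiv0$, since $\varphi\equiv\varphi_2$ in $P_0\PtUpL\PtUpR$ by \eqref{phi-states-0-1-2} and $\varphi$ is continuous up to $\Sonic$ from $\Omega$. On $\Shock$ we have $w\ge0$ by Condition (c). On the remaining part of $\partial\Omega$, which lies on the reflecting boundary $\partial\Lambda$, $\varphi$ satisfies the slip condition $D\varphi\cdot\nu=0$; together with the explicit form of $\varphi_2$ this gives $Dw\cdot\nu=g\ge0$ there — indeed $g\equiv0$ on the wedge face $\partial\Lambda\cap\{\xi>0\}$, where $\varphi_2$ also satisfies the slip condition, and $g=u_2\tan\theta_w>0$ on any segment of $\partial\Omega$ lying on $\{\eta=0\}$. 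Since the equation for $w$ has no lower‑order term and is elliptic in $\Omega$, the weak maximum principle shows that $\min_{\overline\Omega}w$ is attained on $\partial\Omega$; if it were negative it could not be attained on $\Sonic\cup\Shock$, so it would occur at a point $P$ of the reflecting part, where the equation is uniformly elliptic and an interior ball condition holds, and Hopf's lemma would force $\partial_\nu w(P)<0$, contradicting $Dw\cdot\nu(P)\ge0$ — unless $w$ is locally constant, hence (strong maximum principle, $\Omega$ connected) $w$ is constant in $\Omega$, hence $w\equiv0$ by its value on $\Sonic$, again a contradiction; the finitely many corners either lie on $\Sonic\cup\Shock$ or are handled by the standard corner adaptation of Hopf's lemma. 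Hence $w\ge0$ in $\overline\Omega$. Finally, if $w(z_0)=0$ for some $z_0\in\Omega$, then $z_0$ is an interior minimum of the nonnegative function $w$ solving the homogeneous equation, so the strong maximum principle gives $w\equiv0$ in $\Omega$, i.e.\ $\varphi\equiv\varphi_2$ there; but $\varphi=\varphi_1$ on $\Shock$ by \eqref{phi-states-0-1-2}, so then $\Shock\subset\{\varphi_1=\varphi_2\}=S_1$ and the reflected shock would coincide with the straight line $S_1$ through $P_0$ and $\PtUpL$, contradicting the regular reflection configuration. Therefore $\varphi>\varphi_2$ in $\Omega$.

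I expect the main obstacle to be the algebraic identity that makes \emph{all} lower‑order terms drop out of the equation for $w$: it rests on \eqref{1.1.5} being affine in $D^2\varphi$, on the source term $-|D\varphi|^2+2c^2$ coinciding with the trace of the principal coefficient matrix, and on $D^2\varphi_2=-I$; without this cancellation $w$ would solve an equation with a ``wrong‑sign'' zeroth‑order term $\sim 2(\gamma-1)w$, and neither the maximum principle nor Hopf's lemma would apply directly. A secondary point requiring care is pinning down the exact shape of $\partial\Omega$ near its corners and checking the nonnegativity of the oblique data $g$ on its reflecting portion, together with the exclusion of the degenerate case $w\equiv0$.
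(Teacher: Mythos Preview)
Your proof is correct and takes essentially the same approach as the paper: both derive the identical pure second-order homogeneous equation $(c^2\delta_{ij}-\varphi_i\varphi_j)\,w_{ij}=0$ for $w=\varphi-\varphi_2$ (the paper's displayed equation for $\psi$ is exactly this after unwinding the notation $\varphi_\xi=\psi_\xi-\xi+u_2$, $\varphi_\eta=\psi_\eta-\eta+u_2\tan\theta_w$), record the same four boundary conditions on $\Sonic$, $\Shock$, $\Wedge$, and $\partial\Omega\cap\{\eta=0\}$, and conclude via the strong maximum principle together with Hopf's lemma on the oblique portions. Your structural explanation of \emph{why} the lower-order terms cancel---the source term $-|D\varphi|^2+2c^2$ equals the trace of the principal coefficient matrix and $D^2\varphi_2=-I$---is more illuminating than the paper's, which simply writes down the resulting equation; note also that your final exclusion of $w\equiv 0$ via the shock geometry is unnecessary, since the strict sign $\partial_\nu w=u_2\tan\theta_w>0$ on $\{\eta=0\}$ (which you already used) rules out the constant case directly.
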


\begin{proof}
By \eqref{1.1.5}--\eqref{1.1.6} and \eqref{state2a},
$\psi:=\varphi-\varphi_2$ satisfies
\begin{equation}\label{eq-psi}
\begin{split}
&(\til c^2-(\psi_{\xi}-\xi+u_2)^2)\psi_{\xi\xi}
+(\til c^2-(\psi_{\eta}-\eta+u_2\tan\theta_w)^2)\psi_{\eta\eta}\\
&\phantom{aaaaaaaaaaa}-2(\psi_{\xi}-\xi+u_2)(\psi_{\eta}-\eta+u_2\tan\theta_w)
\psi_{\xi\eta}=0\;\;\qquad \text{in}\;\;\Omega,
\end{split}
\end{equation}
where $\til c^2(D\psi,\psi,\xi,\eta)
=c_2^2+(\gam-1)\Bigl((\xi-u_2)\psi_{\xi}+(\eta-u_2\tan\theta_w)\psi_{\eta}
-\frac 12|D\psi|^2-\psi\Bigr)$.

Since the equation (\ref{eq-psi}) is elliptic inside $\Omega$ and
$\varphi$ is smooth inside  $\Omega$, it follows that (\ref{eq-psi})
is uniformly elliptic in any compact subset of $\Omega$.
Furthermore, we have
\begin{align*}
\psi=0 \qquad&\text{on}\;\;\Gamma_{sonic},\\
D\psi\cdot (-\sin\theta_w,\cos\theta_w)=0 \qquad &\text{on}\;\;\Gamma_{wedge},\\
\psi_{\eta}=-u_2\tan\theta_w<0 \qquad &\text{on}\;\;\partial\Omega\cap \{\eta=0\},\\
\psi\ge 0 \qquad &\text{on}\;\;\Gamma_{shock}\;\quad \text{(by
Definition 4.1 (c))}.
\end{align*}
Then the strong maximum principle implies
$$
\psi>0\;\;\qquad \text{in}\;\;\Omega,
$$
which is \eqref{varphi-varphi-2}. This completes the proof.
\end{proof}

Now we first show that any regular reflection solution in our case
cannot be $C^2$ across the sonic circle  $\Sonic:=\PtUpL \PtUpR$.

\begin{theorem} \label{noC2acrossSonicLineThm}
Let $\vphi$ be a regular reflection solution in the sense of
Definition {\rm 4.1}. Then $\vphi$ cannot be $C^2$ across the sonic
circle $\Sonic$.
\end{theorem}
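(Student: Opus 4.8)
The plan is to argue by contradiction: suppose $\varphi$ is $C^2$ across the sonic circle $\Sonic$. First I would work in the coordinates $(x,y)$ near $\Sonic$ defined in \eqref{coordNearSonic}, where $x = c_2 - r$ and $y = \theta - \theta_w$, and set $\psi := \varphi - \varphi_2$. On the subsonic side $\{x > 0\}$ (that is, inside $\Omega_{\eps_0}$), the equation \eqref{1.1.5} for $\varphi$ rewrites, after expansion around state $(2)$ near the sonic circle, in the form \eqref{mainEq-allTerms} with $\Aa = \gamma+1 > 0$, $\Ab = 1/c_2^2 > 0$ (up to the precise normalization used in Section~2), and with the $O_i$ satisfying the smallness bounds \eqref{Oks-int}--\eqref{Oks-Der-int}; this is exactly the computation that Section~4 is built to carry out, using properties \eqref{C11NormEstimate} and \eqref{psi-x-est} from the list at the end of Section~2 to verify that $\psi$ satisfies \eqref{x-deriv-small}. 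On the supersonic side $\{x < 0\}$, $\varphi = \varphi_2$, so $\psi \equiv 0$ and hence all second derivatives of $\psi$ vanish there; in particular $\psi_{xx} = 0$ on $\Sonic$ approached from the supersonic side, and since $\varphi$ is assumed $C^2$ across $\Sonic$ we would get $\psi_{xx}(0,y) = 0$ for the limit from $\Omega$ as well (note $\varphi_2$ is smooth, so $\varphi \in C^2$ across $\Sonic$ is equivalent to $\psi \in C^2$ across $\Sonic$).

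Next I would invoke the Regularity Theorem (Theorem~\ref{mainRegularityThm}): since $\psi$ satisfies \eqref{mainEq-positive}--\eqref{mainEq-Dirichlet}, \eqref{x-deriv-small}, and equation \eqref{mainEq-allTerms} in a rectangle $\Qr_{\hat r, R}$ sitting strictly away from the corner point $\PtUpL$ (one chooses $R$ and the $y$-range so that the rectangle lies in the interior portion of $\Sonic$, using \eqref{FBnearSonic} to know $\Shock$ stays away), it yields $\psi \in C^{2,\alpha}$ up to $\{x = 0\}$ with the precise boundary value $\psi_{xx}(0,y) = 1/\Aa$. With $\Aa = \gamma + 1$ (or whatever positive constant the expansion produces), this gives $\psi_{xx}(0,y) = \frac{1}{\gamma+1} \neq 0$ from the subsonic side. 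This contradicts the equality $\psi_{xx}(0,y) = 0$ forced by the assumed $C^2$ matching with the supersonic side. Hence no regular reflection solution can be $C^2$ across $\Sonic$.

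The main obstacle — and the step requiring real care — is the reduction of equation \eqref{1.1.5} to the normalized form \eqref{mainEq-allTerms} with the correct sign of the leading coefficient $\Aa > 0$ and with $O_i$ genuinely satisfying \eqref{Oks-int}--\eqref{Oks-Der-int}. One must write out the potential-flow operator in the polar coordinates \eqref{coordPolar}, substitute $\varphi = \varphi_2 + \psi$ and $x = c_2 - r$, $y = \theta - \theta_w$, and carefully Taylor-expand the coefficients (which involve $c^2$ through \eqref{c-through-density-function}) in $x$ and in $D\psi$; the key structural point is that the coefficient of $\psi_{xx}$ is $2x - (\gamma+1)\psi_x + (\text{higher order})$, so that $\Aa = \gamma+1 > 0$, and that $\psi_x = O(x)$ by \eqref{C11NormEstimate}, which makes all remainder terms fit the required bounds once $\eps_0$ (hence $\hat r$) is chosen small. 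A secondary point is to confirm that the rescaling $(x,y) = (c_2 - r, \theta - \theta_w)$ in \eqref{coordNearSonic} has nonvanishing, smooth Jacobian on the relevant region so that $C^2$ regularity in $(\xi,\eta)$ transfers to $C^2$ regularity in $(x,y)$ and back, and that "$C^2$ across $\Sonic$" for $\varphi$ indeed passes to "$\psi_{xx}$ continuous across $\{x=0\}$" — both are straightforward but must be stated.
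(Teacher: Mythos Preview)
Your overall strategy---contradict the assumed $C^2$ matching by showing $\psi_{xx}(0,y)\neq 0$ from the subsonic side---is the same as the paper's, but there is a genuine logical gap in how you verify the hypothesis \eqref{x-deriv-small}. You invoke properties \eqref{parabolicNorm}, \eqref{ell}, and \eqref{OmegaXY}--\eqref{fhat} from the end of Section~2, but those are properties of the \emph{specific} solutions constructed in \cite{ChenFeldman}, not hypotheses of Definition~\ref{RegReflSolDef}. Theorem~\ref{noC2acrossSonicLineThm} is stated for an \emph{arbitrary} regular reflection solution in the sense of that definition, which assumes only $\varphi\in C^{0,1}(\Lambda)\cap C^2(\Omega)$, ellipticity in $\Omega$, and condition~(c); nothing in Definition~\ref{RegReflSolDef} guarantees a priori that $|\psi_x|\le \frac{2-\delta_0}{\gamma+1}x$, or that the shock is a $C^{1,1}$ graph $y=\hat f(x)$ near $\PtUpL$. (Those extra assumptions are precisely what Theorem~\ref{mainthm1} adds on top of Definition~\ref{RegReflSolDef}.)

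The fix---and this is what the paper does---is to extract \eqref{x-deriv-small} from the \emph{contradiction hypothesis} itself. If $\varphi$ is $C^2$ across $\Sonic$, then $\psi=\varphi-\varphi_2$ is $C^2$ there with $D^2\psi\equiv 0$ on $\Sonic$ (since $\psi\equiv 0$ on the supersonic side). Continuity of $D^2\psi$ then forces $|D\psi|\le \delta x$ in a sufficiently thin rectangle around any interior point of $\Sonic$, for any prescribed $\delta>0$; this yields \eqref{x-deriv-small} and also the bounds \eqref{Oks-int}--\eqref{Oks-Der-int} for the explicit $O_i$ in \eqref{OtermsRegRefl}. With this repair your argument goes through. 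The paper then applies only Proposition~\ref{h} (the quadratic lower bound $\psi\ge \mu x^2$), which already contradicts $D^2\psi(0,y)=0$; invoking the full Theorem~\ref{mainRegularityThm} as you do would also work but is more than needed.
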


\begin{proof} On the contrary, assume that $\vphi$ is $C^2$ across  $\Sonic$.
Then  $\psi=\vphi-\varphi_2$ is also $C^2$  across  $\Sonic$, where
$\vphi_2$ is given by (\ref{state2a}). Moreover, since $\psi\equiv
0$ in $P_0\PtUpL\PtUpR$ by (\ref{phi-states-0-1-2}), we have
$D^2\psi(\xi,\eta)=0$ at all $(\xi, \eta)\in \Sonic$.

Now substituting $\vphi=\psi+\vphi_2$ into equation \eqref{1.1.5}
 and writing the
resulting equation in the $(x,y)$-coordinates (\ref{coordNearSonic})
in the domain $\Omega_{\eps_0}$ defined by (\ref{OmegaXY}), we find
by an explicit calculation that $\psi(x,y)$ satisfies equation
(\ref{mainEq-allTerms}) in $\Omega_{\eps_0}$ with $a=\gam+1$ and
$b=\frac{1}{c_2}$ and with $O_j=O_j(x, y, \psi, \psi_x, \psi_y),
j=1, \dots, 5,$ given by
\begin{equation}\label{OtermsRegRefl}
\begin{split}
&O_1(\grad\psi,\psi,x) = -\frac{x^2}{c_2}+{\gamma+1\over
2c_2}(2x-\psi_x)\psi_x -{\gamma-1\over c_2}\big(\psi+{1\over
2(c_2-x)^2}\psi_y^2\big),
\\
&O_2(\grad\psi,\psi, x)=-{2\over c_2(c_2-x)^2}(\psi_x+c_2-x)\psi_y,
\\
&O_3(\grad\psi,\psi, x)= {1\over c_2(c_2-x)^2}\Big(x(2c_2-x)-
(\gamma-1)(\psi+(c_2-x)\psi_x+{1\over 2}\psi_x^2)
\\
& \qquad\qquad\qquad\qquad\qquad\qquad\quad
-\frac{\gamma+1}{2(c_2-x)^2}\psi_y^2\Big),
\\
&O_4(\grad\psi,\psi, x) =\frac{1}{c_2-x}\Big(x- {\gamma-1\over
c_2}\big(\psi+(c_2-x)\psi_x+{1\over 2}\psi_x^2 +\frac{\psi_y^2}{2
(c_2-x)^2}\big)\Big),
\\
&O_5(\grad\psi,\psi, x) =
-\frac{1}{c_2(c_2-x)^3}\big(\psi_x+2c_2-2x\big)\psi_y.
\end{split}
\end{equation}

Let $(0, y_0)$ be a point in the relative interior of $\Sonic$. Then
$(0, y_0)+\Qr_{\vR, R}\subset \Om_{\eps_0}$ if $\vR, R>0$ are
sufficiently small. By shifting the coordinates $(x,y)\to (x,
y-y_0)$, we can assume $(0, y_0)=(0,0)$ and $\Qr_{\vR, R}\subset
\Om_{\eps_0}$. Note that the shifting coordinates in the
$y$-direction does not change the expressions in
(\ref{OtermsRegRefl}).

Since $\psi\in C^2(\Om_{\eps_0}\cup\Sonic)$ with $D^2\psi\equiv 0$
on $\Sonic$, reducing $\vR$ if necessary, we get $|D\psi|\le \delta
x$ in $\Qr_{\vR, R}$, where $\delta>0$ is so small
 that (\ref{x-deriv-small}) holds
in $\Om_{\eps_0}$, with $\beta=M=1$, and that the terms $O_i$
defined by (\ref{OtermsRegRefl}) satisfy
(\ref{Oks-int})--(\ref{Oks-Der-int}) with $M=1$. Also, from
Definition \ref{RegReflSolDef}, we obtain that
$\psi=\vphi-\vphi_2>0$ in $\Qr_{\vR, R}$. Now we can apply
Proposition \ref{h} to conclude
$$
\psi(x,y)\ge \mu x^2\;\qquad\text{on}\;\;\Qr_{r, 15R/16}
$$
for some $\mu, r>0$. This contradicts the fact that $D^2\psi(0, y)=
0$ for all $y\in (-R, R)$, that is, $D^2\psi(\xi,\eta)=0$ at any
$(\xi, \eta)\in \Sonic$.
\end{proof}

\medskip
In the following theorem, we study more detailed regularity of
$\psi$ near the sonic circle in the case of  $C^{1,1}$ regular
reflection solutions. Note that this class of solutions especially
includes the solutions constructed in \cite{ChenFeldman}.


\begin{theorem} \label{mainthm1}
Let
$\vphi$ be a regular reflection solution in the sense of Definition
{\rm \ref{RegReflSolDef}} and satisfy the properties:
\begin{enumerate}
\item[(a)]\label{C11NormEstimate-a}
$\varphi$ is $C^{1,1}$ across the part $\Sonic$ of the sonic circle,
i.e., there exists $\eps_0>0$ such that $\vphi\in
C^{1,1}(\overline{P_0\PtUpL\PtLwL\PtLwR}\cap\{c_2-\eps_0<r<c_2+\eps_0\})$;
\item[(b)]\label{psi-x-est-a}
there exists  $\delta_0>0$ so that, in the coordinates {\rm
(\ref{coordNearSonic})},
\begin{equation}\label{ell-a}
|\partial_x(\vphi-\vphi_2)(x,y)|\le
\frac{2-\delta_0}{\gam+1}x\;\qquad\inn \;\;\Om_{\eps_0};
\end{equation}
\item[(c)]\label{FBnearSonic-a}
there exist $\omega>0$ and a function $y=\hat{f}(x)$ such that, in
the coordinates {\rm (\ref{coordNearSonic})},
\begin{equation}\label{OmegaXY-a}
\begin{split}
&\Om_{\eps_0}=\{(x,y)\,:\, x\in(0,\;\eps_0),\;\; 0< y<\hat{f}(x)\}, \\
&\Shock\cap\{0\le x\le \eps_0\}=\{(x,y)\,:\,x\in(0,\;\eps_0),\;\;
y=\hat{f}(x)\},
\end{split}
\end{equation}
and
\begin{equation} \label{fhat-a}
\|\hat{f}\|_{C^{1,1}([0,\;\eps_0])}<\infty\;\;,\;\;
\frac{d\hat{f}}{dx}\!\ge\omega>0\!\;\;\; \text{for}\;\;0<x<\eps_0.
\end{equation}
\end{enumerate}
Then we have
\begin{enumerate}
\renewcommand{\theenumi}{\roman{enumi}}
\item\label{C2alpSonic}
$\vphi$ is $C^{2,\alp}$ up to $\Sonic$ away from the point $\PtUpL$
for any $\alp\in (0,1)$.
That is, for any $\alpha\in (0,1)$ and any given
$(\xi_0,\eta_0)\in\overline\Sonic\setminus\{\PtUpL\}$, there exists
$K<\infty$ depending only on $\rho_0,\,\rho_1,\,\gam,\,\eps_0,\,
\alpha, \|\vphi\|_{C^{1,1}(\Om_{\eps_0})}$, and
$d=dist((\xi_0,\eta_0),\;\Shock)$ so that
\begin{equation*}
\|\vphi\|_{2,\alp;\overline{B_{d/2}(\xi_0,\eta_0)\cap
\Om_{\eps_0/2}}}\le K;
\end{equation*}
%
\item\label{limitSonic}
For any $(\xi_0,\eta_0)\in \Sonic\setminus\{\PtUpL\}$,
\begin{equation*}
\lim_{(\xi,\eta)\to (\xi_0,\eta_0)\atop
 (\xi,\eta)\in\Om }(D_{rr}\vphi-D_{rr}\vphi_2)=\frac{1}{\gamma+1};
\end{equation*}

\item\label{2ndderJumpSonic}
$D^2\vphi$ has a jump across $\Sonic$: For any $(\xi_0,\eta_0)\in
\Sonic\setminus\{\PtUpL\}$,
\begin{equation*}
\lim_{(\xi,\eta)\to (\xi_0,\eta_0)\atop
 (\xi,\eta)\in\Om }D_{rr}\vphi \;-\;
 \lim_{(\xi,\eta)\to (\xi_0,\eta_0)\atop
 (\xi,\eta)\in\Lambda\setminus\Om }D_{rr}\vphi\;=\;
 \frac{1}{\gamma+1};
\end{equation*}
\item\label{noLimAtP1}
The limit $\lim_{(\xi,\eta)\to\PtUpL \atop (\xi,\eta)\in \Om}
D^2\vphi$ does not exist.
\end{enumerate}
\end{theorem}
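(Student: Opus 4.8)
The plan is to establish the four conclusions in order; (i)--(iii) are short, and (iv) carries the real difficulty.

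\emph{For (i)}, I would fix $(\xi_0,\eta_0)\in\overline\Sonic\setminus\{\PtUpL\}$, pass to the coordinates (\ref{coordNearSonic}) so that this point becomes $(0,y_0)$, and apply the Regularity Theorem. Exactly as in the proof of Theorem \ref{noC2acrossSonicLineThm}, $\psi:=\vphi-\vphi_2$ solves (\ref{mainEq-allTerms}) in $\Om_{\eps_0}$ with $\Aa=\gam+1$, $\Ab=1/c_2$, and $O_j$ as in (\ref{OtermsRegRefl}); interior elliptic regularity gives $\psi\in C^\infty(\Om_{\eps_0})$, so the $O_j(x,y)$ are $C^1$ in $\{x>0\}$, while property (a) ($\vphi\in C^{1,1}$, hence $|\psi|\le Cx^2$, $|D\psi|\le Cx$, $|D^2\psi|\le C$ in $\Om_{\eps_0}$) and property (b) yield (\ref{x-deriv-small}) (with $\Abeta=\min(\delta_0,\tfrac18)$) and (\ref{Oks-int})--(\ref{Oks-Der-int}); moreover $\psi>0$ in $\Om$ by (\ref{varphi-varphi-2}) and $\psi=0$ on $\Sonic$ by (\ref{phi-states-0-1-2}). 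At a point in the relative interior of $\Sonic$ a full rectangle $\Qr_{\hat r,R}$ centered at $(0,y_0)$ fits inside $\Om_{\eps_0}$ once $R$ is comparable to the distance from $(0,y_0)$ to $\Shock$, so Theorem \ref{mainRegularityThm} applies; at $\PtUpR\in\Wedge=\{y=0\}$ I would first reflect $\psi$ evenly in $y$ --- legitimate since the slip condition forces $\psi_y=0$ on $\{y=0\}$ and equation (\ref{mainEq-allTerms}) with the $O_j$ of (\ref{OtermsRegRefl}) is invariant under $y\mapsto-y$ --- and then apply Theorem \ref{mainRegularityThm} to the reflected function. This gives the stated $C^{2,\alp}$ bound, with $K$ depending on $d=\dist((\xi_0,\eta_0),\Shock)$ through the size $R$ of the admissible rectangle.

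\emph{For (ii)--(iii)}, note that $x=c_2-r$, so $\der_x=-\der_r$ and $D_{rr}(\vphi-\vphi_2)=\psi_{xx}$ in the coordinates (\ref{coordNearSonic}). By (i), $\psi_{xx}$ is continuous up to $\Sonic$ away from $\PtUpL$, and by Theorem \ref{mainRegularityThm} $\psi_{xx}(0,y)=1/\Aa=1/(\gam+1)$; letting $(\xi,\eta)\to(\xi_0,\eta_0)$ in $\Om$ proves (ii). Since $\vphi\equiv\vphi_2$ in $\Lambda\setminus\Om$ near $\Sonic$, one has $D_{rr}\vphi=D_{rr}\vphi_2$ there, and subtracting from (ii) gives the jump $1/(\gam+1)$ asserted in (iii).

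\emph{For (iv)}, I would argue by contradiction: suppose $L:=\lim_{\Om\ni(\xi,\eta)\to\PtUpL}D^2\vphi$ exists. Approaching $\PtUpL$ through $\Om$ along a sequence $(x_k,y_k)$ with $y_k$ tending to the $y$-coordinate $y_1$ of $\PtUpL$ and $x_k\to0$ fast enough that $K(d_k)x_k^\alp\to0$ (where $d_k$ is the distance of $(0,y_k)$ to $\Shock$), and using (i)--(ii) together with $\psi_{xy}(0,y)=\psi_{yy}(0,y)=0$, forces $\psi_{xx}(\PtUpL)=\tfrac1{\gam+1}$ and $\psi_{xy}(\PtUpL)=\psi_{yy}(\PtUpL)=0$ in the coordinates (\ref{coordNearSonic}); since also $\psi(\PtUpL)=0$ and $D\psi(\PtUpL)=0$ by (a)--(b), we get $\psi(x,y)=\tfrac{x^2}{2(\gam+1)}+o(x^2+(y-y_1)^2)$ near $\PtUpL$. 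Also, $\vphi$ is smooth up to $\Shock$ away from $\PtUpL$ (regularity for the oblique-derivative free boundary problem, cf. \cite{ChenFeldman}), so $D^2\vphi$ extends continuously to $\overline\Om$ near $\PtUpL$ with value $L$ at $\PtUpL$. Writing $\Shock\cap\{0\le x\le\eps_0\}$ as $y=\hat f(x)$ and $g(x):=\psi|_\Shock(x)$, and using $y-y_1=O(x)$ on $\Shock$, the expansion above gives $g(x)=\tfrac{x^2}{2(\gam+1)}+o(x^2)$, i.e. $g''(0)=\tfrac1{\gam+1}$. On the other hand, on $\Shock$ one has $\psi=\vphi_1-\vphi_2$ (since $\vphi=\vphi_1$ there), the Rankine--Hugoniot relation $\rho(|D\vphi|^2,\vphi)D\vphi\cdot\nu=\rho_1 D\vphi_1\cdot\nu$ holds, and evaluating it at $\PtUpL$ (where $D\vphi=D\vphi_2$, $\rho=\rho_2$) and combining with the state-$(1)$--state-$(2)$ Rankine--Hugoniot relation along $S_1$ (using $\rho_2 D\vphi_2(\PtUpL)\neq\rho_1 D\vphi_1(\PtUpL)$) shows $\Shock$ is tangent to $S_1$ at $\PtUpL$. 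Computing $g(x)=(\vphi_1-\vphi_2)(x,\hat f(x))$ from the explicit affine forms of $\vphi_1,\vphi_2$ in the coordinates (\ref{coordNearSonic}) and using the tangency, one obtains $g''(0)$ as an explicit expression linear in $\hat f''(0)$ with nonzero leading coefficient, so $g''(0)=\tfrac1{\gam+1}$ pins down $\hat f''(0)$; finally, differentiating the Rankine--Hugoniot relation once along $\Shock$ and letting $x\to0$ with $D^2\vphi(\PtUpL)=L$ inserted, pins $\hat f''(0)$ down to a \emph{different} value --- the contradiction.

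The hard step is the last one: reading off the curvature of $\Shock$ at the degenerate corner $\PtUpL$ from the Rankine--Hugoniot condition and verifying it is genuinely incompatible with the sonic-side second derivatives. This requires a careful linearization of the two free-boundary conditions in the frame adapted to $\Shock$ at $\PtUpL$, and checking the non-degeneracies carried by Definition \ref{RegReflSolDef}: $D\vphi_1-D\vphi_2\neq0$, $\rho_1 D\vphi_1(\PtUpL)\neq\rho_2 D\vphi_2(\PtUpL)$, the radial direction at $\PtUpL$ not tangent to $\Shock$, supersonicity of state $(2)$ along $P_0\PtUpL$, and $\rho_2>\rho_1$. Everything else is routine: (i) is a localization of Theorem \ref{mainRegularityThm} with an even reflection at the wedge, and (ii)--(iii) fall out immediately from (i) and $\psi_{xx}(0,y)=1/(\gam+1)$.
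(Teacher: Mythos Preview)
Your treatment of (i)--(iii) is correct and matches the paper: localize near a point of $\Sonic$, apply Theorem \ref{mainRegularityThm} to $\psi=\vphi-\vphi_2$, use even reflection across $\Wedge=\{y=0\}$ (legitimate by the slip condition and the $y\mapsto-y$ invariance of (\ref{mainEq-allTerms}) with (\ref{OtermsRegRefl})) to handle $\PtUpR$, and read off $\psi_{xx}(0,y)=1/(\gamma+1)$, $\psi_{xy}(0,y)=\psi_{yy}(0,y)=0$.

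For (iv) your plan diverges from the paper and, as written, misidentifies where the contradiction comes from. The paper never computes $\hat f''(0)$. It first combines the two Rankine--Hugoniot conditions into a single oblique relation
\[
\hat b_1\psi_x+\hat b_2\psi_y+\hat b_3\psi=0\quad\text{on }\Shock\cap\{0<x<\eps\},\qquad \hat b_1\ge\lambda>0
\]
(this is Lemma \ref{bdryCondShock-xy}; proving $\hat b_1>0$ is precisely where $\rho_2>\rho_1$ and $D\vphi_2(\PtUpL)\cdot\tau_0\ne0$ are used). Assuming $D^2\psi$ has a limit at $\PtUpL$, the sonic-side sequence forces $\psi_{xy},\psi_{yy}\to0$; integrating from $(0,\hat f(0))$ gives $\psi_y(x,\hat f(x))=o(x)$, and the oblique relation then forces $\psi_x(x,\hat f(x))=o(x)$. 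Setting $\mathcal F(x)=\psi_x(x,\hat f(x)-\tfrac{\omega}{10}x)$ (strictly inside $\Om$, where $\psi\in C^2$) one gets $\mathcal F(x)/x\to0$, and the mean value theorem produces $x_k\to0$ with $\mathcal F'(x_k)\to0$, hence $\psi_{xx}(x_k,g(x_k))\to0$ --- contradicting $\psi_{xx}\to\tfrac1{\gamma+1}$ along the sonic-side sequence.

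Your route through $\hat f''(0)$ doesn't lead where you expect. Differentiating the combined RH relation $\Psi(\psi_x,\psi_y,\psi,x,y)=0$ once along $\Shock$ and letting $x\to0$ gives
\[
\Psi_{p_1}\bigl(\psi_{xx}+\hat f'(0)\psi_{xy}\bigr)+\Psi_{p_2}\bigl(\psi_{xy}+\hat f'(0)\psi_{yy}\bigr)+(\text{terms vanishing at }\PtUpL)=0,
\]
since $\Psi(0,0,0,x,y)\equiv0$ kills the $\Psi_x,\Psi_y$ contributions; \emph{no} $\hat f''(0)$ appears. Inserting $L=\mathrm{diag}(\tfrac1{\gamma+1},0,0)$ this reads $\Psi_{p_1}\cdot\tfrac1{\gamma+1}=0$, which already contradicts $\Psi_{p_1}\ne0$. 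So the incompatibility is between the oblique condition and $\psi_{xx}(\PtUpL)=\tfrac1{\gamma+1}$, not between two values of the shock curvature. Your computation of $g''(0)$ from $g=(\vphi_1-\vphi_2)|_\Shock$ only encodes the continuity condition $\vphi=\vphi_1$ (which defines $\Shock$), not the flux condition, and therefore cannot by itself conflict with the sonic-side value. If you drop the curvature detour and instead isolate the oblique condition with $\hat b_1>0$ --- then either argue formally as above or, more carefully as the paper does, pull slightly inside $\Om$ and use the mean value theorem so that you never need $C^2$ regularity of $\psi$ up to $\Shock$ --- the argument goes through.
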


\begin{proof}
The proof consists of
seven steps.

Step 1. Let
$$
\psi:=\vphi-\vphi_2.
$$
By \eqref{phi-states-0-1-2} and (\ref{ell-a}),
we have
\begin{equation}\label{psiZeroOnSonic}
\psi(0, y)=\psi_x(0, y)
=\psi_y(0, y)=0\qquad\text{for all }\; (0, y)\in\Sonic,
\end{equation}
and thus, using also (\ref{OmegaXY-a})--\eqref{fhat-a}, we find that
\begin{equation}\label{psiDerivEstNearSonic}
|\psi(x,y)|\le Cx^2,\quad
|D_{x,y}\psi(x,y)|\le Cx\qquad
\text{for all }\;(x,y)\in \Om_{\eps_0},
\end{equation}
where $C$ depends only on $\|\psi\|_{C^{1,1}(\overline{
\Om_{\eps_0}})}$ and $\|\hat{f}\|_{C^{1}([0,\; \eps_0])}$.

Recall that, in the $(x,y)$-coordinates (\ref{coordNearSonic})
 in the domain $\Om_{\eps_0}$ defined by
(\ref{OmegaXY-a}), $\psi(x,y)$ satisfies equation
(\ref{mainEq-allTerms}) with $O_i=O_i(x, y, \psi, \psi_x, \psi_y)$
given by (\ref{OtermsRegRefl}). Then  it follows from
(\ref{OtermsRegRefl}) and (\ref{psiDerivEstNearSonic}) that
\eqref{Oks-int}--\eqref{Oks-Der-int} hold with $N$ depending only on
$\eps_0$, $\|\psi\|_{C^{1,1}(\overline{ \Om_{\eps_0}})}$, and
$\|\hat{f}\|_{C^{1}([0,\; \eps_0])}$.

\medskip
Step 2. Now, using (\ref{ell-a}) and reducing $\eps_0$ if necessary,
we conclude that
 (\ref{mainEq-allTerms}) is uniformly elliptic on $\Om_{\eps_0}\cap\{x>\delta\}$
for any $\delta\in (0, \eps_0)$. Moreover, by (c), equation
(\ref{mainEq-allTerms}) with (\ref{OtermsRegRefl}), considered as a
linear elliptic equation, has $C^1$ coefficients. Furthermore, since
the boundary conditions (\ref{boundary-condition-3}) hold for
$\vphi$ and $\vphi_2$, especially on $\Wedge=\{y=0\}$, it follows
that, in the $(x, y)$-coordinates,
we have
\begin{equation}\label{boundary-condition-wedge-xy}
\psi_y(x, 0) =0  \,\,\qquad\text{for all }\; x\in (0, \eps_0).
\end{equation}
Then, by the standard regularity theory for the oblique derivative
problem for linear, uniformly elliptic equations, $\psi$ is $C^2$ in
$\Om_{\eps_0}$ up to $\partial\Om_{\eps_0}\cap \{0<x<\eps_0,
\;y=0\}$. {}From this and (c), we have
\begin{equation}\label{regPsi}
\psi\in C^{1,1}(\overline{\Om_{\eps_0}})\cap C^2(\Om_{\eps_0}\cup\Wedge^{(\eps_0)}),
\end{equation}
where
$\Wedge^{(\eps_0)}:=\Wedge\cap\{0<x<\eps_0\}\equiv\{(x,0)\;:\;0<x<\eps_0\}$.

Reflect $\Om_{\eps_0}$ with respect to the $y$-axis, i.e., using
(\ref{OmegaXY-a}), define
\begin{equation}\label{extendedOmega}
\hat\Om_{\eps_0}:=\{(x,y):x\in(0,\;\eps_0),\;\; -\hat{f}(x) <
y<\hat{f}(x)\}.
\end{equation}
Extend $\psi(x,y)$ from $\Om_{\eps_0}$ to $\hat\Om_{\eps_0}$ by the
even reflection, i.e., defining $\psi(x, -y)=\psi(x, y)$ for
$(x,y)\in \Om_{\eps_0}$. Using
(\ref{boundary-condition-wedge-xy})--(\ref{regPsi}), we conclude
that the extended function $\psi(x,y)$ satisfies
\begin{equation}\label{regExtendedPsi}
\psi\in C^{1,1}(\overline{\hat\Om_{\eps_0}})\cap C^2(\hat\Om_{\eps_0}).
\end{equation}

Now we use the explicit expressions \eqref{mainEq-allTerms} and
(\ref{OtermsRegRefl}) to find that, if $\psi(x,y)$ satisfies
equation (\ref{mainEq-allTerms}) with (\ref{OtermsRegRefl}) in
$\Om_{\eps_0}$, then the function $\tilde\psi(x,y):=\psi(x, -y)$
also satisfies (\ref{mainEq-allTerms}) with
$O_k(\grad\tilde\psi,\tilde\psi, x)$ defined by
(\ref{OtermsRegRefl})
 in $\Om_{\eps_0}$.
Thus, in  the extended domain
$\hat\Om_{\eps_0}$,
the extended $\psi(x,y)$ satisfies  (\ref{mainEq-allTerms})
 with $O_1, \dots, O_5$ defined by the expressions (\ref{OtermsRegRefl})
in $\hat\Om_{\eps_0}$.

Moreover, by (\ref{phi-states-0-1-2}), it follows that $\psi=0$ on
$\Sonic$. Thus, in the $(x,y)$-coordinates, for the extended $\psi$,
we obtain
\begin{equation}\label{boundary-condition-sonic-xy}
\psi(0, y) =0  \qquad\text{for all }\; y\in (-\hat f(0), \hat f(0)).
\end{equation}
Also, using $\varphi\ge \varphi_2$ in $\Omega$,
\begin{equation}\label{psiGE0-xy}
\psi(0, y) \ge 0  \qquad\text{in }\; \hat\Om_{\eps_0}.
\end{equation}

Step 3. Let $P=(\xi_*, \eta_*)\in\Sonic\setminus \{\PtUpL\}$. Then,
in the $(x,y)$-coordinates,  $P=(0, y_*)$ with $y_*\in [0,\hat{f}(0)
)$. Then, by (\ref{fhat-a}) and (\ref{extendedOmega}), there exist
$\vR, R>0$, depending only on $\eps_0, c_2=c_2(\rho_0, \rho_1, u_1,
\theta_w)$, and $d=\dist((\xi_*, \eta_*), \Shock)$, such that
$$
(0, y_*)+\Qr_{\vR, R}\subset \hat\Om_{\eps_0}.
$$
Then, in $\Qr_{\vR, R}$, the function $\hat\psi(x, y):=\psi(x,
y-y_*)$ satisfies all the conditions of Theorem
\ref{mainRegularityThm}. Thus, applying Theorem
\ref{mainRegularityThm} and expressing the results in terms of
$\psi$, we obtain that, for all $y_*\in [0,\hat{f}(0))$,
\begin{equation}\label{2ndDerivRegRefl}
\lim_{(x,y)\to (0,y_*)\atop (x,y)\in\Omega}
\psi_{xx}(x,y)=\frac{1}{\gamma+1},\qquad \lim_{(x,y)\to (0,y_*)\atop
(x,y)\in\Omega}\psi_{xy}(x,y) = \lim_{(x,y)\to (0,y_*)\atop
(x,y)\in\Omega}\psi_{yy}(x,y)=0.
\end{equation}
Since $\psi_{rr}=\psi_{xx}$ by (\ref{coordNearSonic}), this implies
assertions (\ref{C2alpSonic})--(\ref{limitSonic}) of Theorem
\ref{mainthm1}.

Now assertion (\ref{2ndderJumpSonic}) of Theorem \ref{mainthm1}
follows from  (\ref{limitSonic}) since, by
(\ref{phi-states-0-1-2}), $\vphi=\vphi_2$ in $B_\eps(\xi_*,
\eta_*)\setminus\Omega$ for small $\eps>0$ and $\vphi_2$ is a
$C^\infty$-smooth function in $\R^2$.

\medskip
Step 4. It remains to show assertion (\ref{noLimAtP1}) of Theorem
\ref{mainthm1}. We prove this by contradiction. Assume that
assertion (\ref{noLimAtP1}) is false, i.e., there exists a limit of
$D^2\psi$ at $\PtUpL$ from $\Om$. Then our strategy is to choose two
different sequences of points converging to $\PtUpL$ and show that
the limits of $\psi_{xx}$ along the two sequences are different,
which reaches to a contradiction.
We note that, in the $(x,y)$-coordinates, the point
$\PtUpL=(0,\;\hat{f}(0))$.

\medskip
Step 5. {\bf A sequence close to $\Sonic$}. Let
$\{y_m^{(1)}\}_{m=1}^{\infty}$ be a sequence such that $y_m^{(1)}
\in(0,\;\hat{f}(0))$ and $\lim_{m\rightarrow
\infty}y_m^{(1)}=\hat{f}(0)$. By (\ref{2ndDerivRegRefl}), there
exists $x_m^{(1)}\in(0,\;\frac 1m)$ such that
$$
\big|\psi_{xx}(x_m^{(1)},y_m^{(1)})-\frac{1}{\gamma+1}\big|+
|\psi_{xy}(x_m^{(1)},y_m^{(1)})| +
|\psi_{yy}(x_m^{(1)},y_m^{(1)})|<\frac 1m
$$
 for each
$m=1,2,3,\dots$. Moreover, using (\ref{fhat-a}), we have
$$
y_m^{(1)}< \hat{f}(0)\le \hat{f}(x_m^{(1)}).
$$
Thus, using (\ref{OmegaXY-a}), we have
\begin{equation}\label{seqCloseSonic}
\begin{split}
&(x_m^{(1)}, y_m^{(1)})\in\Om,\;\qquad
\lim_{m\rightarrow\infty}(x_m^{(1)},y_m^{(1)})=(0,\hat{f}(0)),\;\;
\\
&\lim_{m\rightarrow \infty}\psi_{xx}(x_m^{(1)},y_m^{(1)})=\frac{1}{\gamma+1},
\qquad
\lim_{m\rightarrow \infty}\psi_{xy}(x_m^{(1)},y_m^{(1)})
=\lim_{m\rightarrow \infty}\psi_{yy}(x_m^{(1)},y_m^{(1)})
=0.
\end{split}
\end{equation}

\medskip
Step 6.  {\bf The Rankine-Hugoniot conditions on $\Shock$}.
In order to construct another sequence, we first combine the
Rankine-Hugoniot conditions on $\Shock$ into a condition of the
following form:

\begin{lemma}\label{bdryCondShock-xy}
There exists $\eps\in (0, \eps_0)$ such that $\psi$ satisfies
\begin{equation}\label{combinedCondOnShock}
\hat{b}_1(x,y)\psi_x+\hat{b}_2(x,y)\psi_y+\hat{b}_3(x,y)\psi=0\qquad
\text{on}\;\;\Shock\cap\{0<x<\eps\},
\end{equation}
where $\hat{b}_k\in C(\overline{\Shock\cap\{0\le x\le\eps \}})$ and
further satisfies
\begin{equation}\label{coeffs-combinedCondOnShock}
\hat{b}_1(x,y)\ge \lambda, \quad |\hat{b}_2(x,y)|\le \frac 1\lambda,
\quad |\hat{b}_3(x,y)|\le \frac 1\lambda\;\qquad
\text{on}\;\;\Shock\cap\{0<x<\eps \}
\end{equation}
for some constant $\lambda>0$.
\end{lemma}

\begin{proof}
To prove this, we first work in the $(\xi, \eta)$-coordinates. Since
$$
\vphi=\vphi_1,\quad \rho \grad\vphi\cdot\nu=\rho_1
\grad\vphi_1\cdot\nu \qquad\,\,\text{on}\,\,\Shock,
$$
then $\nu$ is parallel to $\grad\vphi_1-\grad\vphi$ so that
\begin{equation}\label{combinedRH}
(\rho_1 \grad\vphi_1-\rho\grad\vphi)\cdot(\grad\vphi_1-\grad\vphi) =
0 \qquad\,\,\text{on}\,\,\Shock.
\end{equation}
Since both $\vphi$ and $\vphi_2$ satisfy
(\ref{1.1.6})--(\ref{c-through-density-function}) and
$\psi:=\vphi-\vphi_2$, we have
\begin{equation}\label{rho-c-psi}
\begin{split}
&\rho=\rho(\grad\psi,\psi,\xi,\eta) =\Big(\rho_2^{\gamma-1}
+(\gamma-1)\big((\xi-u_2)\psi_\xi+(\eta-v_2)\psi_\eta-\frac{1}{2}|\grad\psi|^2-\psi
\big)\Big)^\frac{1}{\gamma-1},\\
&c^2=c^2(\grad\psi,\psi,\xi,\eta)
=c_2^2+(\gamma-1)\Big((\xi-u_2)\psi_\xi+(\eta-v_2)\psi_\eta-\frac{1}{2}|\grad\psi|^2-\psi
\Big).
\end{split}
\end{equation}
Then, writing $\vphi=\vphi_2+\psi$ and using
\eqref{flatOrthSelfSimShock1}--\eqref{flatOrthSelfSimShock2}, we
rewrite (\ref{combinedRH}) as
\begin{equation}\label{combinedRH-1}
E(\psi_\xi, \psi_\eta,\psi, \xi, \eta) = 0
\qquad\,\,\text{on}\,\,\Shock,
\end{equation}
where, for $(p_1, p_2, p_3,  \xi, \eta)\in\R^5 $,
\begin{eqnarray}
&&E(p_1,p_2, p_3, \xi, \eta)\label{combinedRH-1-1}\\
&&\quad =\rho_1\Big((u_1-\xi)(u_1-u_2-p_1)+\eta(v_2+p_2)\Big)\nonumber\\
\nonumber
&&\qquad -\rho(p_1,p_2, p_3, \xi, \eta)\Big((u_2-\xi+p_1)(u_1-u_2-p_1)
-(v_2-\eta+p_2)(v_2+p_2)\Big),\\
&&\rho(p_1,p_2, p_3,  \xi, \eta)=\Big(\rho_2^{\gamma-1}
+(\gamma-1)\big((\xi-u_2)p_1+(\eta-v_2)
p_2-\frac{1}{2}|p|^2-p_3\big) \Big)^\frac{1}{\gamma-1}, \quad
\label{combinedRH-1-2}
\end{eqnarray}
with  $v_2:= u_2\tan\theta_w$.

Since both points $P_0$ and $\PtUpL$ lie on
$S_1=\{\vphi_1=\vphi_2\}$, we have
$$
(u_1-u_2)(\xi_1-\xi_0)-v_2(\eta_1-\eta_0)=0,
$$
where $(\xi_1,\eta_1)$ are the coordinates of $\PtUpL$. Now, using
the condition $\vphi=\vphi_1$ on $\Shock$, i.e.,
$\psi+\vphi_2=\vphi_1$ on $\Shock$, we have
\begin{equation}\label{etaOnShock}
\eta=\frac{(u_1-u_2)(\xi-\xi_1)-\psi(\xi,
\eta)}{v_2}+\eta_1\qquad\,\, \text{on}\,\,\Shock.
\end{equation}
{}From (\ref{combinedRH-1}) and (\ref{etaOnShock}), we conclude
\begin{equation}\label{combinedRH-2}
F(\psi_\xi, \psi_\eta,\psi, \xi) = 0\; \qquad\,\text{on}\,\,\Shock,
\end{equation}
where
\begin{equation}\label{definition of F}
F(p_1, p_2, p_3,  \xi)=E(p_1, p_2, p_3, \xi,
\frac{(u_1-u_2)(\xi-\xi_1)-p_3}{v_2}+\eta_1).
\end{equation}

Now, from (\ref{combinedRH-1-1})--(\ref{combinedRH-1-2}), we obtain
that, for any $\xi\in \R$,
\begin{equation}\label{combinedRH-zero}
\begin{split}
F(0,0,0,  \xi)&=E(0, 0, 0, \xi, \frac{(u_1-u_2)(\xi-\xi_1)}{v_2}+\eta_1)\\
&=\rho_1\Big((u_1-\xi_1)(u_1-u_2)+v_2\eta_1\Big)
 -\rho_2\Big((u_2-\xi_1)(u_1-u_2)-v_2(v_2-\eta_1)\Big)\\
&=\Big(\rho_1\grad\vphi_1(\xi_1,\eta_1)-
\rho_2\grad\vphi_2(\xi_1,\eta_1)
\Big)\cdot (u_1-u_2,\;-v_2)\\
&=0,
\end{split}
\end{equation}
where the last expression is zero since it represents the right-hand
side of the Rankine-Hugoniot condition (\ref{FBConditionSelfSim-0})
at the point $\PtUpL$ of the shock $S_1=\{\vphi_1=\vphi_2\}$
separating state (2) from state (1).

Now we write condition (\ref{combinedRH-2}) in the
$(x,y)$-coordinates on $\Shock\cap\{0<x<\eps_0 \}$. By
(\ref{coordPolar})--(\ref{coordNearSonic}) and (\ref{combinedRH-2}),
we have
\begin{equation}\label{combinedRH-xy}
\Psi(\psi_x,\psi_y, \psi, x,y) = 0
\qquad\,\,\text{on}\,\,\Shock\cap\{0<x<\eps_0 \},
\end{equation}
where
\begin{equation}\label{combinedRH-xy-1}
\begin{split}
\Psi(p_1, p_2, p_3,  x, y)=& F\big(-p_1\cos(y+\theta_w)
-\frac{p_2}{c_2-x}\sin(y+\theta_w),\; \\
&\,\,\,-p_1\sin(y+\theta_w) +\frac{p_2}{c_2-x}\cos(y+\theta_w),\;
p_3,\;u_2+(c_2-x)\cos(y+\theta_w)\big).
\end{split}
\end{equation}

{}From  (\ref{combinedRH-zero}) and (\ref{combinedRH-xy-1}), we find
\begin{equation}\label{combinedRH-xy-zero}
\Psi(0, 0, 0, x,y) = F(0,0,0, u_2+(c_2-x)\cos(y+\theta_w))=0
\qquad\text{on}\,\, \Shock\cap\{0<x<\eps_0\}.
\end{equation}

By its explicit definition
(\ref{combinedRH-1-1})--(\ref{combinedRH-1-2}),
(\ref{combinedRH-2}), and (\ref{combinedRH-xy-1}), the function
$\Psi(p_1, p_2, p_3, x, y)$ is $C^\infty$ on the set $\{|(p_1, p_2,
p_3, x)|<\delta\}$, where $\delta>0$ depends only on $u_2, v_2,
\rho_2, \xi_0, \eta_0$, i.e., on the data. Using
(\ref{psiDerivEstNearSonic}) and choosing $\eps>0$ small, we obtain
$$
|x|+|\psi(x,y)|+|D\psi(x,y)|\le \delta \qquad \text{for all
$(x,y)\in \overline{\Om_{\eps}}$}.
$$
Thus, from (\ref{combinedRH-xy})--(\ref{combinedRH-xy-zero}),
it follows that $\psi$ satisfies (\ref{combinedCondOnShock}) on
$\Shock\cap\{0<x<\eps \}$, where
\begin{equation}\label{coefFormulas}
\hat b_k(x,y)=\int_0^1 \Psi_{p_k}\Big(t\psi_x(x,y),t\psi_y(x,y),
t\psi(x,y), x,y\Big)\,dt \qquad\text{for }\;k=1,2,3.
\end{equation}
Thus, we have
$$
\hat b_k\in C(\overline{\Shock\cap\{0\le x\le\eps \}}),\quad |\hat
b_k|\le \frac{1}{\lambda} \qquad\text {on }\; \Shock\cap\{0<x<\eps
\}, \,\,\,\text {for }\;k=1,2,3,
$$
for some $\lambda>0$.

It remains to show that $\hat b_1\ge \lambda$ for some $\lambda >0$.
For that, since $\hat b_1$ is defined by (\ref{coefFormulas}), we
first show that $\Psi_{p_1}(0, 0, 0, 0, y_1)>0$, where $( x_1,
y_1)=(0, \hat f(0))$  are the coordinates of     $\PtUpL$.

In the calculation, we will use that, since  $(0, y_1)$ are the
$(x,y)$-coordinates of  $\PtUpL=(\xi_1, \eta_1)$, then, by
(\ref{coordPolar})--(\ref{coordNearSonic}),
$$
\xi_1=u_2+c_2\cos(y_1+\theta_w), \qquad
\eta_1=v_2+c_2\sin(y_1+\theta_w),
$$
which implies
$$
(\xi_1-u_2)^2+(\eta_1-v_2)^2=c_2^2.
$$
Also, $c_2^2=\rho_2^{\gamma-1}$.
Then, by explicit calculation, we obtain
\begin{equation}\label{4.27a}
\begin{split}
\Psi_{p_1}(0, 0, 0, 0, y_1)=&
\frac{\rho_1}{c_2}\big((u_1-\xi_1)(\xi_1-u_2)-\eta_1(\eta_1-v_2)\big)\\
&-\frac{\rho_2}{c_2}\big((u_2-\xi_1)(\xi_1-u_2)+(v_2-\eta_1)(\eta_1-v_2)\big).
\end{split}
\end{equation}
Now, working in the $(\xi,\eta)$-coordinates on the right-hand side
and noting that $\grad\vphi_1(\xi_1,\eta_1)=(u_1-\xi_1,-\eta_1)$ and
$\grad\vphi_2(\xi_1,\eta_1)=(u_2-\xi_1,v_2-\eta_1)$, we rewrite
\eqref{4.27a} as
\begin{equation*}
\begin{split}
&\Psi_{p_1}(0, 0, 0, 0, y_1)=-\frac 1{c_2}
\Big(\rho_1\grad\vphi_1(\xi_1,\eta_1)- \rho_2\grad\vphi_2(\xi_1,\eta_1)
\Big)\cdot \grad\vphi_2(\xi_1,\eta_1),
\end{split}
\end{equation*}
where $\grad=(\partial_\xi, \partial_\eta)$. Since the point
$\PtUpL$ lies on the shock $S_1=\{\vphi_1=\vphi_2\}$ separating
state (2) from state (1), then, denoting by $\tau_0$ the unit vector
along the line $S_1$, we have
$$
\grad\vphi_1(\xi_1,\eta_1)\cdot \tau_0=\grad\vphi_2(\xi_1,\eta_1)\cdot \tau_0.
$$
Now, using the Rankine-Hugoniot condition
(\ref{FBConditionSelfSim-0}) at the point $\PtUpL$ for $\vphi_1$ and
$\vphi_2$, we obtain
$$
\rho_1\grad\vphi_1(\xi_1,\eta_1)- \rho_2\grad\vphi_2(\xi_1,\eta_1)=
(\rho_1-\rho_2)\big(\grad\vphi_2(\xi_1,\eta_1)\cdot \tau_0\big)
\tau_0,
$$
and thus
\begin{equation*}
\begin{split}
&\Psi_{p_1}(0, 0, 0, 0, y_1)= \frac 1{c_2}
(\rho_2-\rho_1)\big(\grad\vphi_2(\xi_1,\eta_1)\cdot \tau_0\big)^2,
\end{split}
\end{equation*}
where $\rho_2>\rho_1$ by the assumption of our theorem.

Thus it remains to prove that $\grad\vphi_2(\xi_1,\eta_1)\cdot
\tau_0\ne 0$. Note that $|\grad \vphi_2(\xi_1,\eta_1)|=c_2=
\rho_2^{(\gamma-1)/2}$, since $(\xi_1,\eta_1)$ is on the sonic
circle. Thus, on the contrary, if $\grad\vphi_2(\xi_1,\eta_1)\cdot
\tau_0= 0$, then, using also $\grad\vphi_1(\xi_1,\eta_1)\cdot
\tau_0=\grad\vphi_2(\xi_1,\eta_1)\cdot \tau_0$, we can write the
Rankine-Hugoniot condition (\ref{FBConditionSelfSim-0}) at $(\xi_1,
\eta_1)$ in the form:
\begin{equation}\label{RHsonic}
\rho_1|\grad\vphi_1(\xi_1,\eta_1)|=\rho_2\,
\rho_2^{(\gamma-1)/2}=\rho_2^{(\gamma+1)/2}.
\end{equation}
Since both $\vphi_1$ and $\vphi_2$ satisfy (\ref{1.1.5})
and since
$\vphi_1(\xi_1,\eta_1)=\vphi_2(\xi_1,\eta_1)$ and $|\grad
\vphi_2(\xi_1,\eta_1)|=c_2$, we have
$$
\rho_1^{\gamma-1}+\frac{\gamma-1}{2}|\grad\vphi_1(\xi_1,\eta_1)|^2=
\rho_2^{\gamma-1}+\frac{\gamma-1}{2}\rho_2^{\gamma-1}.
$$
Combining this with (\ref{RHsonic}), we obtain
\begin{equation}\label{RH-Bern-sonic}
\frac 2{\gamma+1} \left(\frac {\rho_1}{\rho_2} \right)^{\gamma-1}
+\frac {\gamma-1}{\gamma+1}\left(\frac {\rho_2}{\rho_1} \right)^2 =1.
\end{equation}
Consider the function
$$
g(s)=\frac 2{\gamma+1} s^{\gamma-1} +\frac
{\gamma-1}{\gamma+1}s^{-2} \qquad\text{on}\,\, (0,\infty).
$$
Since $\gamma>1$, we have
$$
g'(s)<0 \quad \text{on}\, (0, 1); \qquad\,\, g'(s)>0
\quad\text{on}\, (1, \infty); \qquad\,\, g(1)=1.
$$
Thus, $g(s)=1$ only for $s=1$. Therefore, (\ref{RH-Bern-sonic})
implies $\rho_1=\rho_2$, which contradicts the assumption
$\rho_1<\rho_2$ of our theorem. This implies that
$\grad\vphi_2(\xi_1,\eta_1)\cdot \tau_0\ne 0$, thus $\Psi_{p_1}(0,
0, 0, 0, y_1)>0$.

Choose $\lambda:=\frac 12 \Psi_{p_1}(0, 0, 0, 0, y_1)$. Then
$\lambda>0$. Since the function $\Psi(p_1, p_2, p_3,  x, y)$ is
$C^\infty$ on the set $\{|(p_1, p_2, p_3,  x)|<\delta\}$ and since
$\psi\in C^{1,1}(\overline{\Om_{\eps_0}})$ with
$\psi(0,0)=\psi_x(0,0)=\psi_y(0,0)=0$ by (\ref{psiZeroOnSonic}), we
find that, for small $\eps>0$,
$$
\Psi_{p_1}\Big(t\psi_x(x,y),t\psi_y(x,y), t\psi(x,y),
x,y\Big)\ge\lambda \quad \text{for all}\,\,\,
(x,y)\in\Shock\cap\{0<x<\eps \},\;t\in [0,1].
$$
Thus, from (\ref{coefFormulas}), we find  $\hat b_1\ge \lambda$.
Lemma \ref{bdryCondShock-xy} is proved.
\end{proof}

\medskip
Step 7. {\bf A sequence close to $\Gamma_{shock}$}. Now we construct
the sequence close to $\Shock$. Recall that we have assumed that
assertion (\ref{noLimAtP1}) is false, i.e., $D^2\psi$ has a limit
 at $\PtUpL$ from $\Om$. Then
 (\ref{seqCloseSonic}) implies
\begin{equation}\label{limitPsiXYisZero}
\lim_{(x,y)\to (0,\;\hat{f}(0))\atop
(x,y)\in\Om}
\psi_{xy}(x,y)
=\lim_{(x,y)\to (0,\;\hat{f}(0))\atop
(x,y)\in\Om}
\psi_{yy}(x,y)
=0,
\end{equation}
where $(0,\;\hat{f}(0))$ are the coordinates of $\PtUpL$ in the
$(x,y)$-plane. Note that, from (\ref{psiZeroOnSonic}),
$$
\psi_y(x, \hat{f}(x))
=\int_0^x \psi_{xy}(s,\hat{f}(0))ds
+\int_{\hat{f}(0)}^{\hat{f}(x)}
\psi_{yy}(x,t)dt,
$$
and, from (\ref{OmegaXY}), all points in the paths of integration
are within $\Om$. Furthermore, by (\ref{fhat}),
$0<\hat{f}(x)-\hat{f}(0)<Cx$ with $C$ independent of $x\in (0,
\eps_0)$. Now, (\ref{limitPsiXYisZero}) implies
\begin{equation}\label{limitPsiYisZeroX}
\lim_{x\to 0+}
\frac{\psi_{y}(x,\hat{f}(x))}x
=0.
\end{equation}

Also, by Lemma \ref{bdryCondShock-xy},
$$
|\psi_{x}(x,\hat{f}(x))|= |\frac{\hat{b}_2\psi_y+\hat{b}_3\psi}{\hat{b}_1}|\le
C(|\psi_y|+|\psi|)
\qquad\text{on}\;\;(0,\eps),
$$
where $\eps>0$ is from Lemma \ref{bdryCondShock-xy}. Then, using
(\ref{limitPsiYisZeroX}) and  $|\psi(x,y)|\le Cx^2$ by
(\ref{psiDerivEstNearSonic}), we have
\begin{equation}\label{limitPsiXisZeroX}
\lim_{x\to 0+}
\frac{\psi_{x}(x,\hat{f}(x))}x
=0.
\end{equation}

\medskip
Let
$$
\mcl{F}(x):=\psi_x(x,\;\hat{f}(x)-\frac{\omega}{10}x)
$$
for some constant $\omega>0$. Then $\mcl F(x)$ is well-defined and
differentiable for $0<x<\eps_0$ so that
\begin{equation}
\begin{split}
  \mcl{F}(x)&=\psi_{x}(x,\;\hat{f}(x)-\frac{\omega}{10}x)\\
      &=\psi_{x}(x,\hat{f}(x))+\int_0^1\frac{d}{dt}\psi_{x}
       (x,\;\hat{f}(x)-\frac{t\omega}{10}x)\, dt\\
      &=\psi_{x}(x,\hat{f}(x))-\frac{\omega}{10} x\int_0^1
      \psi_{xy}(x,\hat{f}(x)- \frac{t\omega}{10}x)\, dt.
\end{split}
\end{equation}
Now (\ref{limitPsiXYisZero}) and (\ref{limitPsiXisZeroX}) imply
\begin{equation}\label{limit}
\lim_{x\to 0+}
\frac{\mcl{F}(x)}x
=0.
\end{equation}

 By (\ref{regPsi}) and since $\hat f\in C^{1,1}([0,\eps_0])$, we have
 \begin{equation}\label{reg-F}
 \mcl{F}\in C([0, \eps])\cap C^1((0,\eps)).
 \end{equation}
Then (\ref{limit}) and the mean-value theorem imply that there
exists a sequence $\{x_k^{(2)}\}$ with $x_k^{(2)}\in (0, \eps)$ and
 \begin{equation}\label{seqNearShock-x}
 \lim_{k\to\infty} x_k^{(2)}=0
 \quad\text{and}\quad
 \lim_{k\to\infty}\mcl{F}'(x_k^{(2)})=0.
 \end{equation}
%
%
%
%

By definition of $\mcl{F}(x)$,
\begin{equation}\label{3.40}
\psi_{xx}\big(x,\;g(x)\big)= \mcl{F}'(x)-g'(x) \psi_{xy}(x,\; g(x))
\end{equation}
where $g(x):=\hat{f}(x)-\frac{\omega}{10}x$.

On the other hand, $|\hat{f}'(x)|$ is bounded. Then, using
(\ref{limitPsiXYisZero}) and (\ref{seqNearShock-x})--(\ref{3.40})
yields
$$
\lim_{k\rightarrow \infty}\psi_{xx}(x_k^{(2)},g(x_k^{(2)}))=
\lim_{k\rightarrow \infty}\mcl{F}'(x_k^{(2)})=0.
$$
Note that $\lim_{x\to 0+} g(x)=\hat f(0)$. Thus, denoting
$y_k^{(2)}=g(x_k^{(2)})$, we conclude
$$
(x_k^{(2)}, y_k^{(2)})\in\Om,\quad
\lim_{k\rightarrow \infty}(x_k^{(2)}, y_k^{(2)})=(0,\hat f(0)),\quad
\lim_{k\rightarrow \infty}\psi_{xx}(x_k^{(2)},y_k^{(2)})=0.
$$
Combining this with (\ref{seqCloseSonic}), we conclude that
$\psi_{xx}$ does not have a limit  at $\PtUpL$ from $\Om$, which
implies assertion (\ref{noLimAtP1}). This completes the proof of
Theorem \ref{mainthm1}.
\end{proof}

\begin{remark}
For the isothermal case, $\gam=1$, there exists a global regular
reflection solution in the sense of Definition
{\rm{\ref{RegReflSolDef}}} when $\theta_w\in (0,\frac{\pi}{2})$ is
close to $\frac{\pi}{2}$. Moreover, the solution has the same
properties stated in Theorem {\rm\ref{mainthm1}} with $\gamma=1$.
This can be verified by the limiting properties of the solutions for
the isentropic case when $\gamma\to 1+$. This is because, when
$\gamma\to 1+$,
$$
i(\rho)\to ln \rho, \qquad p(\rho)\to \rho, \qquad c^2(\rho)\to 1
\qquad\quad  \mbox{in \eqref{gamma-law}},
$$
$$
\rho(|D\varphi|^2, \varphi)\to \rho_0
e^{-(\varphi+\frac{1}{2}|D\varphi|^2)} \qquad\quad \mbox{in
\eqref{1.1.6}},
$$
and
$$
c_*(\varphi, \rho_0,\gamma)\to 1 \qquad\quad \mbox{in
\eqref{1.1.8a}},
$$
in which case the arguments for establishing Theorem {\rm
\ref{mainthm1}} is even simpler.
\end{remark}

\bigskip
{\bf Acknowledgments.} The authors thank Luis Caffarelli for helpful
suggestions and comments. This paper was completed when the authors
attended the ``Workshop on Nonlinear PDEs of Mixed Type Arising in
Mechanics and Geometry'', which was held at the American Institute
of Mathematics, Palo Alto, California, March 17--21, 2008. Gui-Qiang
Chen's research was supported in part by the National Science
Foundation under Grants DMS-0505473, DMS-0244473, and an Alexander
von Humboldt Foundation Fellowship. Mikhail Feldman's research was
supported in part by the National Science Foundation under Grants
DMS-0500722 and DMS-0354729.


\bibliographystyle{amsplain}

\end{document}